\theoremstyle{plain} 
\newtheorem{thm}{Theorem}[section]
\newtheorem{lem}[thm]{Lemma}
\newtheorem{prop}[thm]{Proposition}
\theoremstyle{definition} 
\newtheorem{definition}[thm]{Definition}
\newtheorem{rem}[thm]{Remark}
\newcommand{\R}{\mathbb{R}}
\newcommand{\supp}{\rm{supp}}
\def\supp{\, \hbox{\rm supp}\,  }
\let\tilde=\widetilde
\newcommand{\eqsp}[1]{{\begin{equation}\begin{aligned}#1\end{aligned}\end{
equation}}}
\title[Global dynamics for the Hardy-Sobolev equation]{Well-posedness and global dynamics for the critical Hardy-Sobolev parabolic equation}
\date\today
\author[N. Chikami, M. Ikeda and K. Taniguchi]{Noboru Chikami, Masahiro Ikeda and Koichi Taniguchi}
\address[N. Chikami]
{
Graduate School of Engineering, 
Nagoya Institute of Technology, 
Gokiso-cho, Showa-ku, Nagoya 
466-8555, Japan.}
\email{chikami.noboru@nitech.ac.jp}
\address[M. Ikeda]
{Faculty of Science and Technology,
Keio University, 
3-14-1 Hiyoshi, Kohoku-ku, Yokohama, 223-8522, Japan/ Center for Advanced Intelligence Project
RIKEN, Japan.}
\email{masahiro.ikeda@keio.jp/masahiro.ikeda@riken.jp}
\address[K. Taniguchi]
{Advanced Institute for Materials Research,
Tohoku University,
2-1-1 Katahira, Aoba-ku, Sendai, 980-8577, Japan.}
\email{koichi.taniguchi.b7@tohoku.ac.jp}
\begin{document}

\footnote[0]
{2020 {\it Mathematics Subject Classification.}
Primary 35K05; Secondary 35B40;}
\maketitle

\begin{abstract}
We study the Cauchy problem for the semilinear heat equation with 
the singular potential, called the Hardy-Sobolev parabolic equation, in the energy space. 
The aim of this paper is to determine a necessary and sufficient condition on initial data below or at the ground state, under which 
the behavior of solutions is completely dichotomized. More precisely, the solution exists globally in time and its energy decays to zero in time, or it blows up 
in finite or infinite time. 
The result on the dichotomy for the corresponding Dirichlet problem is also shown as a by-product via comparison principle.
\end{abstract} 


\section{Introduction}\label{sec:1}
\subsection{Introduction and setting}
We consider the Cauchy problem of the critical Hardy-Sobolev parabolic equation
\begin{equation}\label{crtHS}
	\begin{cases}
		\partial_t u - \Delta u = |x|^{-\gamma} |u|^{2^*(\gamma)-2} u,&(t,x)\in (0,T)\times\R^d, \\
		u(0) = u_0
	\end{cases}
\end{equation}
in spatial dimensions $d\ge3$ 
with initial data $u_0$ in the energy space $\dot{H}^1(\R^d)$,  defined by 
\[
	\dot{H}^1(\R^d)
	:= 
	\left\{ f \in L^{q_c} (\R^d)\ ;\ \|f\|_{\dot H^1} = \left(\int_{\R^d}|\nabla f(x)|^2\, dx \right)^\frac12 < \infty \right\},
	\quad q_c := \frac{2d}{d-2},
\]
where $T>0$, $\gamma\in [0,2)$, and $2^*(\gamma)$ is the critical Hardy-Sobolev exponent, i.e.,
\[
	2^*(\gamma):= \frac{2(d-\gamma)}{d-2}. 
\]
Here, $\partial_t:=\partial/\partial t$ is the time derivative, 
$\nabla := (\partial/\partial x_1, \ldots, \partial/\partial x_d)$ is the vector differential operator, 
$\Delta:=\sum_{j=1}^d\partial^2/\partial x_j^2$ is the Laplace operator on $\R^d$, 
$u=u(t,x)$ is an unknown complex-valued function on $(0,T)\times \mathbb R^d$, and $u_0=u_0(x)$ is a prescribed complex-valued function on $\mathbb R^d$. The equation \eqref{crtHS} with $\gamma>0$ is known as a {\it Hardy parabolic equation} or a {\it Hardy-Sobolev parabolic equation}, while that with $\gamma<0$ is known as a {\it H\'enon parabolic equation}. 
The case $\gamma=0$ corresponds to 
a heat equation with a standard power-type nonlinearity, often called the {\it Fujita equation}, 
which has been extensively studied in various directions. 
In the case $\gamma\ne0$, the equation \eqref{crtHS} 
is not invariant under the translation with respect to space variables,
owing to the existence of the space-dependent potential.
Furthermore, in the case $\gamma>0$, the equation \eqref{crtHS} has no non-trivial classical solution, as the potential has a singularity at the origin. 
The elliptic part of \eqref{crtHS}, that is,
\begin{equation}\label{statio}
    -\Delta \phi=|x|^{-\gamma}|\phi|^{2^*(\gamma)-2}\phi,\ \ \ x\in \R^d,
\end{equation}
was proposed by H\'enon as a model to study the rotating stellar systems (see \cite{H-1973}), and has been extensively studied in the mathematical context, especially in the field of nonlinear analysis and variational methods (see \cite{GhoMor2013} for example). 

In this study, we address the equation \eqref{crtHS} with $\gamma>0$ in the energy space $\dot H^1(\R^d)$.
The {\it total energy} (or simply {\it energy}) functional $E_{\gamma}$ is defined by 
\[
	E_{\gamma}(f) 
	:= \frac{1}{2} \|f\|_{\dot H^1}^2 - \frac{1}{2^*(\gamma)} 
	\int_{\R^d} \frac{ |f(x)|^{2^*(\gamma)} }{|x|^\gamma} dx,\quad f \in \dot H^1(\R^d),
\]
where the first and second terms correspond to the {\it kinetic} and {\it potential energies}, respectively. 
The energy of solution is (formally) dissipated: 
\begin{equation}\label{energy-diss}
	\frac{d}{dt}E_{\gamma}(u(t)) = -\int_{\mathbb R^d} |\partial_t u(t,x)|^2\, dx \le 0.
\end{equation}
Moreover, the equation \eqref{crtHS}, and the total energies, kinetic energies, and potential energies of its solutions are invariant under the scaling transformation $u\mapsto u_{\lambda}$ for $\lambda>0$, which is defined by
\begin{equation}\label{crt.HSscale}
	u_{\lambda}(t,x):=\lambda^{\frac{2-\gamma}{2^*(\gamma)-2}}u(\lambda ^2t,\lambda x) = \lambda^{\frac{d-2}{2}}u(\lambda ^2t,\lambda x). 
\end{equation}
Thus, the problem \eqref{crtHS} is called {\it energy critical}, and the space $\dot H^1(\R^d)$ (as well as $L^{q_c}(\R^d)$) is often called a {\it scaling critical} space.
We say that the problem is {\it energy subcritical} ({\it energy supercritical} resp.) if the power $p$ of the nonlinearity $|x|^{-\gamma}|u|^{p-2}u$ is strictly less than (strictly greater than resp.) the critical exponent $2^*(\gamma)$. 

Our interest is a problem on global behavior in time of solutions to \eqref{crtHS}. 
The equation \eqref{crtHS} has a nonlinearity that works as a source term. 
In general, there are various behaviors of solutions to partial differential equations 
with the source term, depending on the choice of the initial data. 
Thus, it requires great effort to completely classify the behavior of solutions based on the initial data. 
However, below the energy of {\it ground state} (or the {\it mountain pass energy}), it is often possible to obtain a necessary and sufficient condition on the initial data, under which the behavior of solution is completely dichotomized into {\it dissipative} and {\it blow-up}. 
Here, the ground state is a minimal-energy non-trivial solution to the corresponding stationary problem, and 
the mountain pass energy 
coincides with that of the ground state. 
In the energy-subcritical case, this type of dichotomy has been extensively studied for various partial differential equations 
(see \cites{GW-2005,Gig86,IT-arxiv,IS-1996,Ish2007,PaySat1975,Sat1968,T-1972} and references therein). 
However, the problem in the energy-critical case is more delicate due to the lack of compactness. 
Proof of this dichotomy was advanced by 
Kenig and Merle \cites{KM-2006,KM-2008} for focusing semilinear Schr\"odinger equations and wave equations on $\mathbb R^d$ with $d=3,4,5$, and 
by Ishiwata \cite{Ish2008} for nonlinear parabolic equations involving the $p$-Laplacian on $\mathbb R^d$ with $d\ge2$, 
including the equation \eqref{crtHS} with $\gamma=0$ and $d\ge3$ as a typical case, based on the argument of concentration compactness. 
Recently, Roxanas also obtained the dichotomy for the harmonic map heat flow and the four-spatial-dimensional energy-critical heat equation 
(see \cite{Rox2017} and also \cite{GR-2018}). 

Recently, it has been also studied for several partial differential equations with the same type of nonlinear term as in the equation \eqref{crtHS}. 
In particular, several results on the dichotomy for semilinear Schr\"odinger equations 
\begin{equation}\label{schro}
i\partial_t u +\Delta u = -  |x|^{-\gamma} |u|^{p-2} u,\quad (t,x)\in (0,T)\times\R^d
\end{equation}
have been obtained (see \cites{CFGM_arxiv,CL_arxiv} and references therein). 
For example, Cho and Lee proved a scattering result for the energy-critical semilinear Schr\"odinger equation \eqref{schro} on $\R^3$ with $p=2^{*}(\gamma)$, 
where they dealt exclusively with symmetric solutions (see \cite{CL_arxiv}). 
In the non-radial case, the latest result have been obtained by Cardoso, Farah, Guzm\'an, and Murphy for the energy-subcritical Schr\"odinger equations \eqref{schro} with $p<2^{*}(\gamma)$ (see \cite{CFGM_arxiv}).
However, to our knowledge, similar research on the Hardy-Sobolev parabolic equation does not exist. 
Therefore, this study aims to provide the necessary and sufficient condition on initial data with energy less than or equal to that of the ground state for \eqref{crtHS}.

\subsection{Statement of the result}\label{sub:1.2}
To state our result, let us introduce the notion of solution in this paper. 
We study the problem \eqref{crtHS} via the integral form 
\begin{equation}\label{integral-equation}
	u(t,x) = (e^{t\Delta} u_0)(x) + \int_{0}^t e^{(t-\tau)\Delta} \{|\cdot|^{-\gamma} |u(\tau,\cdot)|^{2^*(\gamma)-2} u(\tau,\cdot)\}(x)\,d\tau,
\end{equation}
where 
$\{e^{t\Delta}\}_{t>0}$ is the linear heat semigroup, defined by 
\[
	(e^{t\Delta} f)(x) := (G(t, \cdot) * f)(x) = \int_{\R^d} G(t,x-y)f(y)\, dy,\quad t>0,\ x\in \R^d,
\]
and $G$ is the heat kernel, i.e., 
\begin{equation}\label{gaussian-kernel}
	G(t,x) := (4 \pi t)^{-\frac{d}{2}} e^{-\frac{|x|^2}{4t}},\quad t>0,\  x\in \mathbb R^d.
\end{equation}
We say that a function $u=u(t,x)$ is a (mild) solution to \eqref{crtHS} on $[0,T) \times \mathbb R^d$ with initial data $u_0\in\dot H^1(\R^d)$ if 
$u \in C([0,T'] ; \dot H^1(\R^d))$ satisfies the integral equation \eqref{integral-equation} for any $T'\in (0,T)$,
where $T \in (0,\infty]$. When $T<\infty$, the solution $u$ is called local in time. 
We denote by $T_m=T_m(u_0)$ the maximal existence time of solution with initial data $u_0$. 
We say that $u$ is global in time if $T_m = + \infty$ and that $u$ blows up in finite time otherwise. 
Moreover, we say that $u$ is dissipative if $T_m = + \infty$ and 
\[
	\lim_{t\to\infty} \|u(t)\|_{\dot H^1} = 0,
\]
that $u$ grows up at infinite time if 
$T_m = + \infty$ and 
\[
	\limsup_{t\to\infty} \|u(t)\|_{\dot H^1} = + \infty,
\]
and that $u$ is stationary if $u(t,x) = \phi(x)$ on $[0,\infty)\times\R^d$, where $\phi$ is a solution of the elliptic equation \eqref{statio}. 

The problem on local well-posedness, i.e., existence of local in time solution, uniqueness, and continuous dependence on initial data, for \eqref{crtHS} 
has been studied in the space $L^{q}(\R^d)$ and the space of continuous bounded functions on $\R^d$ (see \cites{BenTayWei2017,Chi2019,Tay2020,Wan1993}). 
Particularly, Slimene, Tayachi, and Weissler proved local well-posedness, except the uniqueness, for \eqref{crtHS} in the scaling critical space $L^{q_c}(\R^d)$ (see \cite{BenTayWei2017}). Recently, the unconditional uniqueness for \eqref{crtHS} in $C([0,T]; L^{q_c}(\R^d))$ has been proven by Tayachi \cite{Tay2020}, and 
local well-posedness has been studied in scaling critical Besov spaces by Chikami \cite{Chi2019}. 
Similarly to these works, we can obtain local well-posedness for \eqref{crtHS} in the energy space $\dot H^1(\R^d)$, but a more detailed argument is required to justify the energy identity \eqref{energy-diss}. The details are given in Subsections~\ref{sub:2.3} and~\ref{sub:2.4} below (see also Appendix \ref{app:A}).

In addition, we provide some notations and definitions. 
The function 
\begin{equation}\label{Talenti}
	W_\gamma(x) := \left((d-\gamma) (d-2) \right)^{\frac{d-2}{2(2-\gamma)}} (1+|x|^{2-\gamma})^{-\frac{d-2}{2-\gamma}}
\end{equation}
is a ground state of \eqref{crtHS}. By invariance of \eqref{crtHS}, its scaling and rotation
\[
e^{i\theta_0} \lambda_0^{\frac{d-2}{2}}W(\lambda_0 x),\quad \lambda_0>0,\ \theta_0 \in \mathbb R
\]
 is also a ground state.
We introduce the Nehari functional $J_\gamma$ and the Nehari manifold $\mathcal N_\gamma$ by 
\begin{equation}\label{Nehari}
	J_\gamma(\phi) := \frac{d}{d\lambda} E(\lambda \phi) \Big|_{\lambda=1} 
	= \|\phi\|_{\dot{H}^1}^2 
	- \int_{\R^d} \frac{ |\phi(x)|^{2^*(\gamma)} }{|x|^\gamma} dx,
\end{equation}
\[
	\mathcal N_{\gamma} := \{\dot{H}^1(\mathbb R^d) \setminus \{0\}\ ;\, J_\gamma(\phi) = 0\},
\]
respectively. Then, the mountain pass energy $l_{HS}$ is given by
\begin{equation}\label{mini}
	l_{HS} 
	:= \inf_{\phi\in \dot{H}^1(\mathbb R^d) \setminus \{0\}} 
	\max_{\lambda\ge0} E(\lambda \phi)
	= \inf_{\phi \in \mathcal N_{\gamma}} E_\gamma(\phi).
\end{equation}
This $l_{HS}$ coincides with the energy $E_\gamma(W_\gamma)$ of the ground state, as mentioned above (see Remark \ref{rem:ground} below). 

\medskip

Our main result is the following.

\begin{thm}
\label{thm:GD}
Let $d\ge 3$, $0< \gamma<2$, and $u=u(t)$ be a solution to \eqref{crtHS} with initial data $u_0\in \dot{H}^1(\mathbb R^d)$. 
Assume $E_\gamma(u_0) \le l_{HS}$. Then, the following statements hold{\rm :}
\begin{itemize}
	\item[(i)] If $J_\gamma(u_0) > 0$, then $u$ is dissipative. 

	\item[(ii)] If $J_\gamma(u_0) < 0$, then $u$ blows up in finite time or grows up at infinite time. Furthermore, if $u_0\in L^2(\R^d)$ is also satisfied, then $u$ blows up in finite time.
\end{itemize}
\end{thm}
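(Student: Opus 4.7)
The plan is to apply the Payne--Sattinger potential-well strategy in the form developed by Ishiwata for energy-critical parabolic equations, adapted to accommodate the Hardy--Sobolev weight. I would introduce the invariant sets
\[
\mathcal{K}^{+} := \{\phi \in \dot H^1(\R^d) : E_\gamma(\phi) \le l_{HS},\ J_\gamma(\phi) > 0\} \cup \{0\},
\]
\[
\mathcal{K}^{-} := \{\phi \in \dot H^1(\R^d) : E_\gamma(\phi) \le l_{HS},\ J_\gamma(\phi) < 0\},
\]
and show that $u(t) \in \mathcal{K}^{\pm}$ for all $t \in [0, T_m)$ whenever $u_0 \in \mathcal{K}^{\pm}$. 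Invariance of the sublevel set $\{E_\gamma \le l_{HS}\}$ is immediate from \eqref{energy-diss}. For the sign of $J_\gamma$, the key observation is that if $J_\gamma(u(t^*)) = 0$ at some $t^* > 0$ with $u(t^*) \ne 0$, then by \eqref{mini} $u(t^*)$ is a rescaled ground state with $E_\gamma(u(t^*)) = l_{HS}$; combined with dissipation this forces $\partial_t u \equiv 0$ on $[0, t^*]$, so $u_0$ is itself a ground state, contradicting $J_\gamma(u_0) \ne 0$.

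For assertion (i), the elementary identity
\[
J_\gamma(\phi) = 2^*(\gamma) E_\gamma(\phi) - \frac{2^*(\gamma) - 2}{2} \|\phi\|_{\dot H^1}^2,
\]
together with the sharp Hardy--Sobolev inequality (whose best constant is realized by $W_\gamma$), yields a coercivity estimate $J_\gamma(\phi) \ge \kappa\, \|\phi\|_{\dot H^1}^2$ on $\mathcal{K}^+$ with $\kappa = \kappa(E_\gamma(\phi)) > 0$ in the strictly subthreshold range, giving global existence and a uniform $\dot H^1$ bound. To obtain $\|u(t)\|_{\dot H^1} \to 0$, I would combine the summability $\int_0^\infty \|\partial_t u(t)\|_{L^2}^2\,dt = E_\gamma(u_0) - \lim_{t \to \infty} E_\gamma(u(t)) < \infty$ with a concentration-compactness/profile decomposition adapted to the Hardy--Sobolev embedding: any nontrivial weak limit along a time sequence would produce a nonzero stationary element of $\mathcal{K}^+$ with energy strictly below $l_{HS}$, contradicting \eqref{mini}.

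For assertion (ii), the same identity rearranges on $\mathcal{K}^-$ to a uniform positive lower bound $-J_\gamma(u(t)) \ge \delta > 0$ away from the threshold. If $u$ were global and bounded in $\dot H^1$, a compactness argument would again produce a stationary solution in $\mathcal{K}^-$, ruled out by \eqref{mini}; hence either finite-time blow-up or grow-up at infinity occurs. Under the additional hypothesis $u_0 \in L^2$, testing the equation against $u$ produces
\[
\frac{1}{2}\frac{d}{dt}\|u(t)\|_{L^2}^2 = -J_\gamma(u(t)) \ge \delta,
\]
and I would conclude via Levine's concavity method: setting $F(t) := \int_0^t \|u(\tau)\|_{L^2}^2\,d\tau + C_0$ and using
\[
F''(t) = -2 J_\gamma(u(t)) \ge (2^*(\gamma) - 2)\|u(t)\|_{\dot H^1}^2 - 2\cdot 2^*(\gamma) E_\gamma(u_0),
\]
one derives $F F'' \ge (1 + \alpha)(F')^2$ for some $\alpha > 0$, which forces finite-time singularity of $F$ and hence of $\|u(t)\|_{L^2}$.

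The principal obstacle is the lack of compactness in the energy-critical Hardy--Sobolev embedding, which prevents a direct soft proof of the decay in (i) and demands a profile decomposition respecting the scaling but not the translation symmetry imposed by the weight $|x|^{-\gamma}$. A secondary difficulty is the rigorous justification of the pointwise-in-time identities (in particular $\tfrac12\tfrac{d}{dt}\|u\|_{L^2}^2 = -J_\gamma(u)$) at merely $\dot H^1$ regularity of the solution; this is handled through the well-posedness and regularization results set up in Subsections~\ref{sub:2.3} and~\ref{sub:2.4}.
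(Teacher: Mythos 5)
Your variational setup (the invariant sets, the coercivity of $J_\gamma$ on the stable set, the reduction of the threshold case, and the concavity argument for the $L^2$ part of (ii)) matches the paper's Lemmas \ref{lem:invariant} and the proof in Subsection \ref{sub:3.2}, and is sound. But the two central steps are left to ``a compactness argument,'' and in the energy-critical setting that is precisely where the proof lives; as sketched, both steps fail.

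For (i): a uniform $\dot H^1$ bound does \emph{not} give global existence here. The blow-up criterion (Proposition \ref{prop:wellposed1}(iv)) is $\|u\|_{\mathcal K^q(T_m)}=\infty$, and an energy-critical solution can lose control of that norm while staying bounded in $\dot H^1$; ruling this out is the whole point. Likewise, ``any nontrivial weak limit along a time sequence would produce a nonzero stationary element'' is not justified at the critical exponent: the nonlinear term does not pass to weak limits, the sequence $u(t_n)$ may concentrate at scales $\lambda_n\to0$ or drift to spatial infinity, and the profiles extracted from it are initial data for the \emph{evolution}, not solutions of the elliptic equation. The paper instead runs the full Kenig--Merle scheme: it defines a critical energy $E^c$, extracts a minimizing sequence, applies the linear profile decomposition (Proposition \ref{prop:profile}) together with the perturbation result (Proposition \ref{prop:perturbation}) to reduce to a single profile (Lemma \ref{lem:singleprofile}), and then uses the energy identity as the rigidity step to force the critical element to be stationary, hence zero. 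A genuinely new ingredient there, which your sketch has no counterpart for, is Lemma \ref{lem:key1}: profiles whose translation centers $x_n^j$ escape to infinity see a vanishing potential $|x|^{-\gamma}$ and evolve asymptotically linearly, which is how the broken translation invariance is handled without restricting to radial data.

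For the former part of (ii): the same objection applies to ``a compactness argument would produce a stationary solution in $\mathcal K^-$,'' and there is an additional obstruction you pass over. With $u_0$ only in $\dot H^1$, the identity $\tfrac12\tfrac{d}{dt}\|u\|_{L^2}^2=-J_\gamma(u)$ is not even available, and the pairing $(u,\partial_t u)_{L^2}$ cannot be controlled by $\int\|\partial_t u\|_{L^2}^2\,dt$ without an $L^2$ bound on $u$. The paper's proof is a \emph{localized} concavity argument: it works with $I_R(t)=\int_0^t\|\chi_R u\|_{L^2}^2\,ds+A$ on a window $[0,T_R]$ with $T_R=R^2\|u_0\|_{L^{q_c}(|x|\ge R)}$, and the entire difficulty is absorbed into Lemma \ref{lem:key-blowup}, which shows the localization error $((\chi_R^2-1)u,\partial_t u)_{L^2}$ is uniformly small on $[0,T_R]$ by exploiting the decay of $|x|^{-\gamma}$ at infinity. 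The paper explicitly notes this statement is open for $\gamma=0$, so no soft compactness argument indifferent to the weight can prove it. Your treatment of the latter part of (ii) (data in $L^2$) is essentially the paper's.
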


\begin{rem}
There is no function $u_0 \in \dot{H}^1(\R^d) \setminus \{0\}$ such that $E_\gamma(u_0) < l_{HS}$ and $J_\gamma(u_0) = 0$.
When $E_\gamma(u_0) = l_{HS}$ and $J_\gamma(u_0) = 0$, the solution $u$ is always a ground state (see Remark \ref{rem:ground} below).
\end{rem}

\begin{rem} 
As a corollary of (ii) in Theorem \ref{thm:GD}, it can be immediately obtained that all solutions to \eqref{crtHS} with negative energy initial data in the inhomogeneous Sobolev space $H^1(\R^d)$ blow up in finite time. 
In fact, it is readily seen that $u_0\in \dot H^1(\R^d)\setminus \{0\}$ with $E_\gamma(u_0)\le 0$ implies $J_\gamma (u_0)<0$, as 
\[
E_\gamma (u_0) = \frac12 J_\gamma (u_0) + \frac{2-\gamma}{2(d-\gamma)} 
\int_{\R^d} \frac{ |u_0(x)|^{2^*(\gamma)} }{|x|^\gamma} dx.
\]
\end{rem}

\subsection{Outline of the proof and contributions}
Herein, the outline of proof of Theorem \ref{thm:GD} is outlined and contributions of this paper are described.

\medskip

\noindent{\bf Statement (i)}. The statement (i) is the main 
contribution of this paper, and 
in its proof, some new difficulties arise from the existence of the space-dependence singular potential.
The strategy of proof of (i) is based on the argument of concentration compactness and rigidity by \cite{KM-2006}. 
More precisely, under an assumption of proof by contradiction, 
we construct 
a minimal-energy blow-up solution $v^c$ to \eqref{crtHS} with $J_\gamma(v^c)>0$ by using  
the perturbation result in Proposition \ref{prop:perturbation} and 
the linear profile decomposition in Proposition \ref{prop:profile}. 
In the construction of $v^c$, a new difficulty peculiar to the case wherein $\gamma>0$ 
arises because of the translation symmetry breaking of \eqref{crtHS} with respect to space variables.
Most existing literature on such equations have dealt 
exclusively with radially symmetric solutions, as it works effectively and eliminates the difficulty. 
However, we confront the non-radial case and use Lemma~\ref{lem:key1} 
to solve the difficulty. 
After constructing $v^c$, we show that it must be identically zero using the rigidity argument, which derives a contradiction and concludes the statement (i). 
Here, we emphasize that our rigidity argument might be more simple and flexible than 
that in existing literature and 
does not require the backward uniqueness often used in such research on parabolic equations (see \cite{GR-2018} and references therein).

\medskip

\noindent{\bf Statement (ii)}. 
The former part of (ii) is also a main contribution of this paper, because it is unknown whether it holds in the case $\gamma=0$, or not.
The proof of (ii) is based on Levine's concavity method, which reduces to an argument for an ordinary differential inequality.
In particular, in our proof of the former part, we employ the concavity method for the spatially localized solution. 
Lemma~\ref{lem:key-blowup} is essential to realize this method. 
By taking advantage of the decay effect of $|x|^{-\gamma}$ at infinity, we can prove this lemma and obtain a better result than for the case $\gamma=0$. 
As for the latter part, we simply apply the concavity method for the solution itself (see, e.g., Theorem~1.2 in \cite{GR-2018}).

\medskip

\noindent{\bf Energy identity}. 
In the proofs of (i) and (ii), we often use the energy identity \eqref{energy-diss}. 
This is formally obtained by multiplying \eqref{crtHS} by $\overline{\partial_t u}$ and integrating it over $\mathbb R^d$; however, its validity is non trivial. 
To prove the validity, we need to know the integrability of solutions to \eqref{crtHS} in more detail, which will be discussed in Subsection~\ref{sub:2.4}. This is also our contribution.

\subsection{The absorbing case and Dirichlet problem}
In this paper, we also study two related problems. 
The first one is the Cauchy problem in 
the absorbing case: 
\begin{equation}\label{crtHS-a-intro}
	\begin{cases}
		\partial_t u - \Delta u = -|x|^{-\gamma} |u|^{2^*(\gamma)-2} u,&(t,x)\in (0,T)\times\R^d, \\
		u(0) = u_0.
	\end{cases}
\end{equation}
Here, the nonlinearity in \eqref{crtHS-a-intro} works as an absorbing term. 
We will show that all solutions to \eqref{crtHS-a-intro} are dissipative in the scaling critical space $L^{q_c}(\R^d)$ by using almost the same argument as that in the proof of (i) in Theorem~\ref{thm:GD} (see Subsection~\ref{sub:4.1}). 

The second one is the Dirichlet problem of the energy-critical Hardy-Sobolev parabolic equation
\begin{equation}\label{crtHS-D-intro}
	\begin{cases}
		\partial_t u - \Delta u = |x|^{-\gamma} |u|^{2^*(\gamma)-2} u,&(t,x)\in (0,T)\times\Omega, \\
		u|_{\partial \Omega} = 0,\\
		u(0) = u_0,
\end{cases}
\end{equation}
where $\Omega$ is a domain of $\R^d$ that contains the origin. 
Then, we will extend Theorem~\ref{thm:GD} to the result on dichotomy for the Dirichlet problem \eqref{crtHS-D-intro} 
through the comparison principle in Lemma~\ref{lem:comparison} (see Subsection \ref{sub:4.2}),
where we require no geometrical assumption on $\Omega$, such as boundedness, smoothness, and convexity.

\section{Preliminaries}\label{sec:2}

\subsection{The Hardy-Sobolev inequality}\label{sub:2.1}
The Hardy-Sobolev inequality plays a fundamental role throughout this paper. 
In this subsection, we summarize the basic results on this inequality and its minimization problem.

\begin{lem}[\cite{Lie1983}, Theorems 15.1.1 and 15.2.2 in \cite{GhoMor2013}]
\label{t:HrdySob2}
Let $d\ge3$, $0\le \gamma\le 2$, and $\Omega$ be a domain in $\mathbb R^d$. 
Then, the inequality 
\begin{equation}\label{HS-ineq}
	\left(\int_{\Omega} \frac{ |f(x)|^{2^*(\gamma)} }{|x|^\gamma} dx \right)^{\frac{1}{ 2^*(\gamma)} }
	\le C_{HS} 
	\left(\int_{\Omega} |\nabla f(x)|^{2} dx \right)^{\frac{1}{ 2}}
\end{equation}
holds for any $f\in H^{1}_0(\Omega)$, 
where $C_{HS}=C_{HS}(d,\gamma,\Omega)$ is the best constant. 
Furthermore, $C_{HS}$ is independent of $\Omega$ whenever $0\in \Omega$, and is attained only if $\Omega=\mathbb R^d$ with the extremal $W_\gamma$ given in \eqref{Talenti}. 
\end{lem}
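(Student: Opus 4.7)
The plan is to reduce the Hardy-Sobolev inequality to the classical Sobolev and Hardy inequalities via a H\"older interpolation, then exploit scaling invariance to obtain independence of $C_{HS}$ on $\Omega$ when $0\in\Omega$, and finally invoke radial rearrangement plus concentration compactness to locate the extremals on $\R^d$.

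First I would derive \eqref{HS-ineq} on $\R^d$ for $0<\gamma<2$ by writing
\[
\frac{|f(x)|^{2^*(\gamma)}}{|x|^\gamma}=\left(\frac{|f(x)|^2}{|x|^2}\right)^{\gamma/2}|f(x)|^{2^*(\gamma)-\gamma}
\]
and applying H\"older with conjugate exponents $2/\gamma$ and $2/(2-\gamma)$. A short computation shows that $(2^*(\gamma)-\gamma)\cdot 2/(2-\gamma)$ equals the Sobolev exponent $2d/(d-2)$, so combining the classical Hardy inequality $\bigl\| |x|^{-1}f\bigr\|_{L^2}\le C\|\nabla f\|_{L^2}$ with the Sobolev embedding $\|f\|_{L^{2d/(d-2)}}\le C\|\nabla f\|_{L^2}$ yields \eqref{HS-ineq} with a constant depending only on $d$ and $\gamma$. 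The endpoints $\gamma=0$ and $\gamma=2$ reduce directly to the Sobolev and Hardy cases.

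Next, for the $\Omega$-independence of $C_{HS}$ when $0\in\Omega$, I would argue by two opposite inclusions. Extending any $f\in H^1_0(\Omega)$ by zero to a function $\tilde f\in\dot H^1(\R^d)$ and applying the already-proven inequality on $\R^d$ gives $C_{HS}(\Omega)\le C_{HS}(\R^d)$. Conversely, the scaling $f_\lambda(x):=\lambda^{(d-2)/2}f(\lambda x)$ preserves both sides of \eqref{HS-ineq}, and for $f\in C_c^\infty(\R^d)$ the support of $f_\lambda$ concentrates near the origin as $\lambda\to\infty$; since $0\in\Omega$, it eventually lies inside $\Omega$, yielding $C_{HS}(\R^d)\le C_{HS}(\Omega)$ by density.

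The main obstacle is the attainment statement on $\R^d$ together with non-attainment on proper subdomains. Since the weight $|x|^{-\gamma}$ is radial and non-increasing, symmetric-decreasing rearrangement leaves the left-hand side of \eqref{HS-ineq} invariant while the P\'olya-Szeg\H o inequality makes the right-hand side non-increasing; hence the minimization can be restricted to radial, non-increasing functions. Standard concentration compactness at the $\dot H^1$-level, combined with the scale invariance of the functional, then furnishes an extremizing sequence whose profile, modulo the dilation symmetry, converges strongly to a nontrivial minimizer. The associated Euler-Lagrange equation is exactly \eqref{statio}, whose positive radial solutions can be classified through an Emden-Fowler change of variables to be the family $\{e^{i\theta_0}\lambda_0^{(d-2)/2}W_\gamma(\lambda_0 x)\}$, as carried out in \cite{Lie1983} and \cite{GhoMor2013}. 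Finally, non-attainment when $\Omega\subsetneq\R^d$ follows because any minimizer on $\Omega$ extended by zero would be a minimizer on $\R^d$, hence equal up to symmetry to $W_\gamma$, contradicting the strict positivity of $W_\gamma$ on $\R^d$ against the trace condition on $\partial\Omega$.
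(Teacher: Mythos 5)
The paper does not prove this lemma at all: it is quoted from \cite{Lie1983} and Theorems 15.1.1 and 15.2.2 of \cite{GhoMor2013}, so there is no in-paper argument to compare against. Your outline is the standard proof found in those references, and its three steps are sound: the pointwise factorization together with H\"older at exponents $2/\gamma$ and $2/(2-\gamma)$ does reduce \eqref{HS-ineq} to Hardy plus Sobolev (your exponent check $(2^*(\gamma)-\gamma)\cdot\tfrac{2}{2-\gamma}=\tfrac{2d}{d-2}$ is correct); the zero-extension and dilation arguments give the two inequalities between $C_{HS}(\Omega)$ and $C_{HS}(\R^d)$ in the right directions; and the rearrangement-plus-compactness scheme, together with the zero-extension argument against the strict positivity of $W_\gamma$, handles attainment and non-attainment. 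Two points should be corrected rather than left as stated. First, symmetric-decreasing rearrangement does \emph{not} leave the left-hand side of \eqref{HS-ineq} invariant; by the Hardy--Littlewood rearrangement inequality (with the radially decreasing weight $|x|^{-\gamma}$) it does not decrease it, which combined with P\'olya--Szeg\H{o} still lets you restrict to radial non-increasing competitors, so the conclusion survives but the intermediate claim is false as written. Second, your classification of extremals as the family $e^{i\theta_0}\lambda_0^{(d-2)/2}W_\gamma(\lambda_0 x)$ is only correct for $0<\gamma<2$: at $\gamma=2$ the best Hardy constant is not attained and $W_\gamma$ in \eqref{Talenti} is not even defined, and at $\gamma=0$ the translation symmetry enlarges the extremal family to include all translates of the Aubin--Talenti profile (and the loss of compactness by translation must then also be handled in the concentration-compactness step). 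So the attainment portion of your proof should be explicitly restricted to $0<\gamma<2$, which is the regime the paper actually uses.
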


\begin{rem}\label{rem:ground}
In the case $\Omega=\mathbb R^d$, 
the inequality \eqref{HS-ineq} holds for any $f\in \dot H^1(\R^d)$, and the equality in \eqref{HS-ineq} holds if and only if $f$ is a ground state of 
the elliptic equation \eqref{statio}. 
Moreover, the mountain pass energy $l_{HS}$ defined by \eqref{mini} coincides with
the energy $E_\gamma(W_\gamma)$ of $W_\gamma$, and is represented by the best constant $C_{HS}$ as follows:
\[
	l_{HS}=E_\gamma(W_\gamma)=\frac{2-\gamma}{2(d-\gamma)} C_{HS}^{\frac{2(d-\gamma)}{2-\gamma}}.
\]
\end{rem}

{See Chapter 15 in \cite{GhoMor2013} for more details on the minimization problem for \eqref{HS-ineq} and the stationary problem \eqref{statio}.

\subsection{Smoothing and decay estimates for heat semigroup}\label{sub:2.2}
Let us recall the definition of the linear heat semigroup $\{e^{t\Delta}\}_{t>0}$:
\[
	e^{t\Delta} f := G(t, \cdot) * f,\quad t>0,
\]
where $G$ is the heat kernel given by $G(t,x) := (4 \pi t)^{-d/2} e^{-|x|^2/(4t)}$ for $t>0$ and $x\in \mathbb R^d$.
We prepare smoothing and decay estimates for $\{e^{t\Delta}\}_{t>0}$. 
\begin{lem}
\label{l:lin.est.HS}
Let $d\ge1$ and $\alpha = (\alpha_1, \ldots, \alpha_d)$ be a multi-index with $|\alpha|=0,1$. 
Then, the following statements hold{\rm :}
\begin{itemize}
	\item[(i)]
	Let $1\le p_1 \le p_2 \le \infty.$ 
	Then, there exists a constant $C=C(d,\alpha,p_1,p_2)>0$ such that 
	\begin{equation}\label{decayest1}
		\| \partial_x^\alpha e^{t\Delta} f\|_{L^{p_2}} 
		\le C  t^{-\frac{d}{2}(\frac{1}{p_1}-\frac{1}{p_2})-\frac{|\alpha|}{2}} 
		\| f\|_{L^{p_1}}
	\end{equation}
	for any $t>0$ and $f \in L^{p_1}(\R^d)$, where $\partial_x^\alpha = \partial_{x_1}^{\alpha_1}\cdots \partial_{x_d}^{\alpha_d}$.
	
	\item[(ii)]
	Let $p_1,p_2,\gamma$ be such that 
	\[
		0 < \gamma < d,\quad 0 \le \frac{1}{p_2} < \frac{\gamma}{d} + \frac{1}{p_1} < 1.
	\]
	Then $\partial_x^\alpha e^{t\Delta}|\cdot|^{-\gamma} : L^{p_1}(\R^d) \to L^{p_2}(\R^d)$ 
	is a bounded map {\rm (}replace $L^{p_2}(\R^d)$ by $C_0(\R^d)$ if $p_2=\infty${\rm )} and 
	there exists a constant $C = C(d, \gamma,\alpha, p_1, p_2)>0$ such that 
	\begin{equation}\label{decayest2}
		\|\partial_x^\alpha e^{t\Delta} (|\cdot|^{-\gamma} f) \|_{L^{p_2}} 
		\le C t^{-\frac{d}{2}(\frac{1}{p_1}-\frac{1}{p_2})-\frac{|\alpha|+\gamma}{2}} \|f\|_{L^{p_1}}
	\end{equation}
	for any $t>0$ and $f \in L^{p_1}(\R^d).$
\end{itemize}
\end{lem}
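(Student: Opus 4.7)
For part (i), I would pass the derivative onto the heat kernel and apply Young's convolution inequality,
\[
\|\partial_x^\alpha e^{t\Delta} f\|_{L^{p_2}} = \|(\partial_x^\alpha G(t, \cdot)) * f\|_{L^{p_2}} \le \|\partial_x^\alpha G(t, \cdot)\|_{L^r} \|f\|_{L^{p_1}},
\]
where $1/r := 1 + 1/p_2 - 1/p_1 \in [0, 1]$. The self-similarity $G(t, x) = t^{-d/2} G(1, x/\sqrt{t})$ gives $\partial_x^\alpha G(t, x) = t^{-d/2 - |\alpha|/2}(\partial^\alpha G)(1, x/\sqrt{t})$, and a change of variables yields $\|\partial_x^\alpha G(t, \cdot)\|_{L^r} = C\, t^{-(d/2)(1/p_1 - 1/p_2) - |\alpha|/2}$, which is the claimed decay.

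For part (ii), I would first reduce to the non-derivative case $|\alpha| = 0$ by factoring $\partial_x^\alpha e^{t\Delta} = (\partial_x^\alpha e^{(t/2)\Delta}) \circ e^{(t/2)\Delta}$ and applying (i) with $p_1 = p_2$ to the outer factor, gaining a $(t/2)^{-|\alpha|/2}$. It then suffices to prove
\[
\|e^{s\Delta}(|\cdot|^{-\gamma} f)\|_{L^{p_2}} \le C\, s^{-(d/2)(1/p_1 - 1/p_2) - \gamma/2} \|f\|_{L^{p_1}}.
\]
The key input is the pointwise comparison
\[
G(s, z) \le C_a\, s^{(a-d)/2}\, |z|^{-a}, \qquad 0 < a < d,
\]
obtained from $G(s,z) = (4\pi s)^{-d/2} e^{-|z|^2/(4s)}$ and $e^{-u} \le C_a u^{-a/2}$. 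Choosing $a := d(1 + 1/p_2 - 1/p_1) - \gamma$, the hypotheses $0 \le 1/p_2 < \gamma/d + 1/p_1 < 1$ are precisely equivalent to $0 < a < d$. Pulling out the $s^{(a-d)/2}$ prefactor reduces the estimate to the Stein-Weiss weighted fractional-integration inequality
\[
\Big\|\int_{\R^d} |\cdot - y|^{-a}\, |y|^{-\gamma}\, f(y)\, dy\Big\|_{L^{p_2}} \le C \|f\|_{L^{p_1}},
\]
whose scaling identity $(a + \gamma)/d = 1 + 1/p_2 - 1/p_1$ holds by construction and whose weight constraint $\gamma < d/p_1'$ is equivalent to $\gamma/d + 1/p_1 < 1$.

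The main obstacle I anticipate is verifying the Stein-Weiss step in the full parameter range, particularly at the endpoint exponents ($p_1 = 1$ or $p_2 = \infty$) and in the regime $p_2 < p_1$, for which the classical Stein-Weiss theorem requires the additional monotonicity $p \le q$. In such cases, I would instead exploit the weak membership $|\cdot|^{-\gamma} \in L^{d/\gamma, \infty}(\R^d)$: H\"older's inequality in Lorentz spaces gives $\||\cdot|^{-\gamma} f\|_{L^{q, \infty}} \le C \|f\|_{L^{p_1}}$ with $1/q = \gamma/d + 1/p_1$, and since $1/p_2 < 1/q$ strictly, the real-interpolated semigroup bound $e^{s\Delta}: L^{q, \infty} \to L^{p_2}$ (proved by a standard $L^1/L^\infty$ truncation argument combined with (i) and optimization in the truncation level) has decay $s^{-(d/2)(1/q - 1/p_2)} = s^{-\gamma/2 - (d/2)(1/p_1 - 1/p_2)}$, the required rate. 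The mapping into $C_0(\R^d)$ when $p_2 = \infty$ follows from dominated convergence applied to the explicit integral representation.
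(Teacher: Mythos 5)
Your proposal is correct, and it is worth noting that the paper itself does not prove this lemma: it records (i) as classical and defers (ii) to \cite{BenTayWei2017}, describing it only as ``a combination of (i) and H\"older's inequality.'' Your proof of (i) by Young's inequality together with the self-similarity of the kernel is exactly the standard argument. For (ii), your primary route --- dominating the Gaussian pointwise by $C_a s^{(a-d)/2}|z|^{-a}$ and invoking Stein--Weiss with $a=d(1+1/p_2-1/p_1)-\gamma$ --- is genuinely different from the cited proof, and you are right to flag that the classical Stein--Weiss theorem requires $1<p_1\le p_2<\infty$, whereas the hypotheses $0\le 1/p_2<\gamma/d+1/p_1<1$ also permit $p_2<p_1$ and the endpoint exponents (e.g.\ $p_1=\infty$, $f\equiv 1$, where the claim amounts to $|x|^{-\gamma}\in L^{p_2}(|x|>1)$ for $1/p_2<\gamma/d$). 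Your fallback does close the gap in full generality: H\"older/O'Neil against $|\cdot|^{-\gamma}\in L^{d/\gamma,\infty}$ places $|\cdot|^{-\gamma}f$ in $L^{q,\infty}$ with $1/q=\gamma/d+1/p_1>1/p_2$ and $q>1$, and a level-set truncation of this function combined with two applications of (i) and optimization in the truncation height gives $e^{s\Delta}:L^{q,\infty}\to L^{p_2}$ with rate $s^{-\frac{d}{2}(\frac1q-\frac1{p_2})}=s^{-\frac{d}{2}(\frac1{p_1}-\frac1{p_2})-\frac{\gamma}{2}}$. This is essentially the argument of \cite{BenTayWei2017}, which instead truncates the weight spatially at $|x|=1$ and recovers the time power by rescaling; the two versions are interchangeable. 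The reduction of the derivative case via $\partial_x^\alpha e^{t\Delta}=\partial_x^\alpha e^{(t/2)\Delta}\circ e^{(t/2)\Delta}$ and the $C_0(\R^d)$ statement for $p_2=\infty$ are routine as you say. The only adjustment I would make is to present the Lorentz/truncation argument as the actual proof and the Stein--Weiss computation as motivation, since the latter does not by itself cover the stated range of exponents.
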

The statement (i) is well known, and the statement (ii) is a combination of (i) and H\"older's inequality. 
The proof of (ii) can be found in Proposition 2.1 from \cite{BenTayWei2017}.

\subsection{Local well-posedness}\label{sub:2.3}
We summarize the results on local well-posedness, small-data global existence, and dissipation of global solutions 
for \eqref{crtHS} in $\dot{H}^1(\R^d)$. 
For this purpose, let us introduce an auxiliary space as the following type.

\begin{definition}\label{def:Kato}
Let $T \in (0,\infty]$, $q\in [1,\infty]$, and $\alpha\in \R$. The space $\mathcal{K}^{q,\alpha}(T)$ is defined by 
\[
   \mathcal{K}^{q,\alpha}(T):=\left\{u\in \mathscr{D}'([0,T)\times\mathbb R^d)\ ;\ \|u\|_{\mathcal{K}^{q,\alpha}(T')}
   <\infty\ \text{for any }T' \in (0,T)\right\}
\]
endowed with 
\[
\|u\|_{\mathcal K^{q,\alpha}(T)}
	:=\sup_{0\le t\le T}t^{\frac{d}{2}(\frac{1}{q_c}-\frac{1}{q})+\alpha}\|u\|_{L^q},
\]
where $ \mathscr{D}'([0,T)\times\mathbb R^d)$ is the space of distributions on $[0,T)\times\mathbb R^d$. 
We simply write $\mathcal{K}^{q}(T)=\mathcal{K}^{q,0}(T)$ when $\alpha=0$, and $\mathcal{K}^{q,\alpha}=\mathcal{K}^{q,\alpha}(\infty)$ and $\mathcal{K}^{q}=\mathcal{K}^{q}(\infty)$
when $T=\infty$ if they do not cause a confusion. 
\end{definition}

\begin{prop}\label{prop:wellposed1}
Let $d\ge3$ and $0\le \gamma< 2$. Assume that $q\in (1,\infty)$ satisfies 
\begin{equation}\label{l:crtHS.nonlin.est:c2}
	\frac{1}{q_c} - \frac{1}{d(2^*(\gamma)-1)}
	< \frac{1}{q} < \frac{1}{q_c}.
\end{equation} 
Then, the following statements hold{\rm :}
\begin{itemize}
\item[(i)] {\rm (}Existence{\rm )} 
For any $u_0 \in \dot{H}^1(\R^d)$, there exists a maximal existence time $T_m=T_m(u_0)\in (0,\infty]$ such that 
there exists a unique mild solution 
\[
	u\in C([0,T_m); \dot{H}^1(\R^d))\cap \mathcal K^q(T_m)
\]
to \eqref{crtHS} with $u(0)=u_0$. Moreover, the solution $u$ also satisfies 
\[
\|u\|_{\mathcal K^{\tilde{q}}_{\tilde{r}}(T,\Omega)} := 
	\left(
		\int_0^T (t^{\kappa} \|u(t)\|_{L^{\tilde{q}}(\Omega)})^{\tilde{r}} \, dt
	\right)^\frac{1}{\tilde{r}} < \infty
\]
for any $T \in (0,T_m)$ and for any $\tilde{q}, \tilde{r} \in [1,\infty]$ satisfying \eqref{l:crtHS.nonlin.est:c2} and 
\begin{equation}\label{condi-new}
	0\le  \frac1{\tilde r}  < \frac{d}2 \left(\frac1{q_c} - \frac1{\tilde q} \right),
\end{equation}
respectively, 
where $\kappa$ is given by 
\[
\kappa = \kappa(\tilde{q},\tilde{r}) := \frac{d}{2} \left( \frac{1}{q_c} - \frac{1}{\tilde q}\right) - \frac{1}{\tilde r}.
\]

\item[(ii)] 
{\rm (}Uniqueness in $\mathcal{K}^{q}(T)${\rm )} 
Let $T>0.$ If $u_1, u_2 \in \mathcal{K}^{q}(T)$ 
satisfy the integral equation \eqref{integral-equation} 
with $u_1(0)=u_2(0)=u_0$, then $u_1=u_2$ on $[0,T].$ 

\item[(iii)] {\rm (}Continuous dependence on initial data{\rm )} 
The map $T_m : \dot{H}^1(\R^d) \to (0,\infty]$ is 
lower semicontinuous. Furthermore, for any $u_0, v_0 \in \dot{H}^1(\R^d)$ and for any $T < \min\{T_m(u_0),T_m(v_0)\}$, there exists a constant $C>0$, depending on $\|u_0\|_{\dot H^1}$, $\|v_0\|_{\dot H^1}$, and $T$, such that
\[
	\sup_{t \in [0,T]} \|u(t) - v(t)\|_{\dot H^1} 
	+
	\|u-v\|_{\mathcal K^q(T)} \le C \|u_0 - v_0\|_{\dot H^1}.
\]

\item[(iv)] {\rm (}Blow-up criterion{\rm )} 
If $T_m < + \infty,$ then $\|u\|_{\mathcal{K}^q(T_m)} = \infty.$ 

\item[(v)] {\rm (}Small-data global existence and dissipation{\rm )} 
There exists $\rho >0$ such that 
if $u_0 \in \dot{H}^1(\R^d)$ satisfies
$\|e^{t\Delta} u_0\|_{\mathcal{K}^{q}}\le \rho$, 
then $T_m=+\infty$ and 
\[
\|u\|_{\mathcal{K}^{q}} \le 2\rho \quad \text{and}\quad \lim_{t\to\infty}\|u(t)\|_{\dot H^1} = 0.
\]

\item[(vi)] {\rm (}Dissipation of global solutions{\rm )} 
The following statements are equivalent{\rm :}
\begin{itemize}
	\item[(a)] $T_m=+\infty$ and $\|u\|_{\mathcal K^q} < \infty$.
	\item[(b)] $\lim_{t\to T_m} \|u(t)\|_{\dot H^1}=0$.
	\item[(c)] $\lim_{t\to T_m} t^{\frac{d}{2} (\frac{1}{q_c}-\frac{1}{q})} \|u(t)\|_{L^{q}}=0$.
\end{itemize}

\item[(vii)] Let $d=3$. Suppose that $q$ satisfies the additional assumption
\begin{equation}\label{l:crtHS.nonlin.est:c3}
	\frac{1}{q_c} - \frac{1}{12(2-\gamma)}
	< \frac{1}{q}. 
\end{equation} 
Then, for any $u_0 \in \dot H^1(\R^3)$, there exists a maximal existence time $T_m=T_m(u_0)\in (0,\infty]$ such that 
there exists a unique mild solution 
\[
	u\in C([0,T_m); \dot{H}^1(\R^d))\cap \mathcal K^q(T_m)\quad \text{and}\quad \partial_t u \in \mathcal K^{3,1}(T_m)
\]
to \eqref{crtHS} with $u(0)=u_0$. Furthermore, the solution $u$ satisfies 
\[
	\partial_t u \in \mathcal K^{2,1}(T_m).
\]
\end{itemize}
\end{prop}

The statements (i)--(vi) are known, but the last statement (vii) is a new ingredient, which is utilized to justify the energy identity in Subsection \ref{sub:2.4} below. 
The proof is given in Appendix~\ref{app:A}. 

\begin{rem}\label{rem:classical}
It is generally impossible to obtain classical solutions for \eqref{crtHS}. However, 
mild solutions $u$ to \eqref{crtHS} given in Proposition \ref{prop:wellposed1} are continuous and bounded on $\R^d$ for each $t \in (0,T_m)$, and 
belong to 
\[
	u \in C_{\mathrm{loc}}^{1,2}((0,T_m)\times (\R^d\setminus\{0\})) \cap C_{\mathrm{loc}}^{\frac{\alpha}{2},\alpha}((0,T_m)\times \R^d)
\]
for $\alpha \in (0,2-\gamma)$ by the regularity theory for parabolic equations. 
Here, $C_{\mathrm{loc}}^{\alpha,\beta}(I\times \Omega)$ is the space of functions that are locally H\"older continuous with exponent $\alpha\ge0$ in $t\in I$ and exponent $\beta\ge0$ in $x\in \Omega$ for an interval $I\subset (0,\infty)$ and a domain $\Omega\subset\mathbb R^d$. 
See Remark~1.1 and Proposition 3.2 in \cite{BenTayWei2017} (see also the remark after Definition 2.1 in \cite{Wan1993} on page 563).
\end{rem}

Moreover, we have the following stability result for \eqref{crtHS} (see Proposition \ref{prop:perturbation-A} in Appendix~\ref{app:A}).

\begin{prop}[Perturbation result]\label{prop:perturbation}
Let $d\ge3$ and $0\le \gamma <2$. and let $q,\tilde{q}\in [1,\infty]$ and $\tilde{r} \in [1,\infty)$ satisfy \eqref{l:crtHS.nonlin.est:c2} and \eqref{condi-new}, respectively. 
Assume that 
\begin{equation}\label{new-condition1}
\frac{1}{q_c} - \frac{\gamma}{d}
 - \frac{1}{q} < \frac{1}{\tilde q} <\frac{\gamma}{d(2^*(\gamma) - 2)}
\end{equation}
and 
\begin{equation}\label{new-condition2}
\frac{1}{\tilde r} < \min \left\{ \frac{d-2}{2} - \frac{d}{2 \tilde q}, \frac{1}{2^*(\gamma)-2} \right\}.
\end{equation}
Let 
$v$ satisfy 
\[
	\|v\|_{L^\infty([0,\infty); \dot H^1)} + 
	\|v\|_{\mathcal K^{\tilde q}_{\tilde r}} \le M,
\]
and the equation
\[
	\partial_t v - \Delta v = |x|^{-\gamma} |v|^{2^*(\gamma)-2}v + e
\]
with initial data $v(0) = v_0 \in \dot H^1(\R^d)$, where $e=e(t,x)$ is a function on $(0,\infty) \times \mathbb R^d$. 
Then there exist constants $\delta_0=\delta_0(M)>0$ and $C=C(M)>0$ such that 
the following assertion holds{\rm :}
If the error term $e$ and a function $u_0 \in \dot H^1(\mathbb R^d)$ satisfy
\[
	\delta := \| u_0 - v_0\|_{\dot H^1} + \left\|\int_0^t e^{(t-s)\Delta_\Omega}(e(s))\,ds\right\|_{\mathcal K^q} \le \delta_0,
\]
then there exists a unique solution $u$ to \eqref{crtHS} on $(0,\infty)\times \mathbb R^d$ with $u(0)=u_0$ satisfying 
\[
	\|u-v \|_{L^\infty([0,\infty); \dot H^1)\cap \mathcal K^q}\le C \delta.
\]
\end{prop}

\begin{rem}
We note that it is possible to take $(\tilde{q}, \tilde{r})$ satisfying \eqref{new-condition1} and \eqref{new-condition2}. 
In fact, the lower bound is always less than the upper bound in \eqref{new-condition1} if $q$ satisfies 
\[
\frac{1}{q_c} - \frac{\gamma}{d}
 - \frac{1}{q}
< \frac{\gamma}{d(2^*(\gamma) - 2)}\quad \text{i.e.}\quad 
\frac{1}{q_c} - \frac{\gamma}{d}\left( 1 + \frac{1}{2^*(\gamma) - 2}\right) < \frac{1}{q}.
\]
Moreover, the upper bound is always positive if 
\[
\frac{d-2}{2} - \frac{d}{2 \tilde q}>0\quad \text{i.e.}\quad \frac{1}{\tilde{q}} < \frac{1}{q_c}.
\]
\end{rem}

\subsection{Energy identity}\label{sub:2.4}
The energy identity \eqref{energy-diss}
plays an essential role in proving Theorem \ref{thm:GD}, 
and is formally obtained by multiplying the equation \eqref{crtHS} by $\overline{\partial_t u}$ and integrating it over $\mathbb R^d$. 
However, the validity of \eqref{energy-diss} is non trivial. 
In this subsection, we discuss this matter. 

\begin{prop}\label{prop:energy-id}
Let $u_0 \in \dot{H}^1(\R^d)$ and 
$t_0 \in (0,T_m)$. Then, the mild solution $u$ to \eqref{crtHS} with 
$u(0)=u_0$ satisfies the energy identity 
\begin{equation}\label{energy-id}
	E_\gamma(u(t))+ \int_{t_0}^t\int_{\R^d}|\partial_t u(\tau,x)|^2\, dx d\tau 
		= E_\gamma(u(t_0))
\end{equation}
for any $t \in [t_0, T_m)$.
Furthermore, the energy inequality 
\begin{equation}\label{energy-ineq}
	E_{\gamma}(u(t)) \le E_{\gamma}(u_0)
\end{equation}
holds for any $t \in [0, T_m)$.
\end{prop}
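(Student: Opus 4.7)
The plan is to justify rigorously the formal computation of multiplying (\ref{crtHS}) by $\overline{\partial_t u}$ and integrating in $x$. The obstruction is that $u_0$ lies only in $\dot H^1(\R^d)$, so $\partial_t u(t,\cdot)\in L^2(\R^d)$ is not immediate, and this integrability is the main hurdle. The strategy is to secure $\partial_t u\in L^2$ away from $t=0$ using the refined local theory (Proposition~\ref{prop:wellposed1}(vii) for $d=3$ and a direct bootstrap via Lemma~\ref{l:lin.est.HS} for $d\ge 4$), derive the identity (\ref{energy-id}) on $[t_0,T_m)$ with $t_0>0$ by an approximation argument, and finally deduce the inequality (\ref{energy-ineq}) by continuity of $E_\gamma$ on $\dot H^1$.

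\textbf{Step 1: $L^2$-regularity of $\partial_t u$ for positive time.}
For $d=3$, statement (vii) of Proposition~\ref{prop:wellposed1} provides $\partial_t u\in\mathcal K^{2,1}(T_m)$ and hence $\partial_t u\in L^2_{\mathrm{loc}}((0,T_m);L^2(\R^d))$. For $d\ge 4$, the exponent $2^*(\gamma)$ is smaller and the Sobolev embedding is more generous; differentiating the mild formula (\ref{integral-equation}) in $t$ and applying the heat-semigroup estimates of Lemma~\ref{l:lin.est.HS}, starting from the smoothed data $e^{t_0\Delta}u(t_0)$ and to the Duhamel term on $(t_0,T_1)$, yields the same integrability.

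\textbf{Step 2: Identity on $[t_0,T_m)$ by approximation.}
Fix $t_0\in(0,T_m)$ and regularise $u(t_0)$ by a sequence $u_0^{(n)}\in C_c^\infty(\R^d)$ with $u_0^{(n)}\to u(t_0)$ in $\dot H^1$. The corresponding classical solutions $u^{(n)}$ (see Remark~\ref{rem:classical}) are smooth enough that the direct pairing with $\overline{\partial_t u^{(n)}}$, the integration by parts for the Laplacian, and the chain rule for the weighted potential term (all rendered absolutely convergent by the Hardy-Sobolev inequality, Lemma~\ref{t:HrdySob2}) produce
\[
E_\gamma(u^{(n)}(t))+\int_{t_0}^t\|\partial_t u^{(n)}(\tau)\|_{L^2}^2\,d\tau = E_\gamma(u^{(n)}(t_0)).
\]
By Proposition~\ref{prop:wellposed1}(iii) and a standard continuation argument one has $u^{(n)}\to u$ in $C([t_0,T_1];\dot H^1)\cap\mathcal K^q([t_0,T_1])$ for any $T_1<T_m$ and all large $n$. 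Continuity of $E_\gamma$ on $\dot H^1$ handles the endpoint terms; the identity for $u^{(n)}$ then forces $\int_{t_0}^t\|\partial_t u^{(n)}\|_{L^2}^2\,d\tau$ to converge to $E_\gamma(u(t_0))-E_\gamma(u(t))$, and combining this with the uniform Step-1 bound (valid uniformly in $n$) together with weak lower semicontinuity of the $L^2$-norm identifies the limit with $\int_{t_0}^t\|\partial_t u(\tau)\|_{L^2}^2\,d\tau$, giving (\ref{energy-id}).

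\textbf{Step 3: Energy inequality and principal difficulty.}
Since $u\in C([0,T_m);\dot H^1)$ and $E_\gamma$ is continuous on $\dot H^1$ (by Lemma~\ref{t:HrdySob2}), $E_\gamma(u(t_0))\to E_\gamma(u_0)$ as $t_0\downarrow 0$; dropping the nonnegative dissipation term in (\ref{energy-id}) and passing to the limit $t_0\downarrow 0$ yields (\ref{energy-ineq}). The decisive technical point is Step~1 in dimension three, which is exactly the reason that statement (vii) of Proposition~\ref{prop:wellposed1} was formulated; a secondary subtlety, lying in Step~2, is matching the limit of the dissipation integral with $\int_{t_0}^t\|\partial_t u\|_{L^2}^2\,d\tau$, where the uniform bound from Step~1 combined with weak lower semicontinuity closes the gap, with a strong-convergence argument via Proposition~\ref{prop:perturbation} applied to the difference $u^{(n)}-u$ available as a backup.
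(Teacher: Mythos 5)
Your Step 3 and the role you assign to Proposition~\ref{prop:wellposed1}(vii) match the paper, but the core of your argument has two genuine gaps.

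First, the approximation in Step 2 does not remove the actual difficulty. Even for $u_0^{(n)}\in C_c^\infty(\R^d)$, the solution $u^{(n)}$ is \emph{not} classical near the origin (Remark~\ref{rem:classical}: the obstruction is the singularity of $|x|^{-\gamma}$, not the roughness of the data), so the claim that the pairing of the equation with $\overline{\partial_t u^{(n)}}$ is ``rendered absolutely convergent by the Hardy--Sobolev inequality'' is unjustified: that inequality controls $\int |u|^{2^*(\gamma)}|x|^{-\gamma}dx$, not the product $|x|^{-\gamma}|u|^{2^*(\gamma)-2}u\,\overline{\partial_t u}$. The paper's proof is precisely an analysis of this product's integrability, with a case split: for $d\ge4$, or $d=3$ and $\gamma<3/2$, the nonlinear term lies in $L^2_{\mathrm{loc}}L^2$ near the origin and maximal regularity puts $\partial_t u,\Delta u$ in $L^2_{\mathrm{loc}}L^2$, so the pairing is an $L^2$--$L^2$ duality; but for $d=3$ and $3/2\le\gamma<2$ the nonlinear term is \emph{not} in $L^2$ near the origin (cf.\ the remark after the proof: $\Delta W_\gamma\notin L^2$ there), and one must instead pair $L^{3/2}(|x|<1)$ against $L^{3}(|x|<1)$, which is where the $\mathcal K^{3,1}$ information from (vii) is indispensable. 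Your proposal never addresses the integrability of the nonlinear term or of $\Delta u$ at all, and this issue persists unchanged for your approximating solutions. Relatedly, your $d\ge4$ bootstrap in Step 1 (differentiating the Duhamel formula) is exactly the mechanism of Lemma~\ref{l:crtHS.nonlin.est}(iii), which Remark~\ref{rem:restriction-q} shows is available \emph{only} for $d=3$; for $d\ge4$ the paper obtains $\partial_t u\in L^2_{\mathrm{loc}}L^2$ by maximal regularity instead.

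Second, the limit identification at the end of Step 2 fails. Weak lower semicontinuity gives only $\int_{t_0}^t\|\partial_t u\|_{L^2}^2\,d\tau\le\liminf_n\int_{t_0}^t\|\partial_t u^{(n)}\|_{L^2}^2\,d\tau$, i.e.\ the energy \emph{inequality} on $[t_0,t]$, not the identity \eqref{energy-id}; the identity is what the paper actually needs later (e.g.\ to conclude stationarity when $E^c=E_\gamma(v^c(t_0))$). Your proposed backup, Proposition~\ref{prop:perturbation}, controls $u^{(n)}-u$ only in $L^\infty\dot H^1\cap\mathcal K^q$ and says nothing about $\partial_t(u^{(n)}-u)$ in $L^2_tL^2_x$, so it does not close this gap either.
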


\begin{proof}
Let $u_0 \in \dot{H}^1(\R^d)$ and $u$ be a mild solution to \eqref{crtHS} with $u(0)=u_0$. 
To prove the validity of \eqref{energy-id}, we need to know the integrability of $\partial_t u$, $\Delta u$, and $|x|^{-\gamma} |u|^{2^*(\gamma)-2}u$. 
To begin with, we check 
the integrability of the nonlinear term. 
It is easily seen from Remark \ref{rem:classical} that 
\begin{equation}\label{proof:energy-id_2}
	|x|^{-\gamma} |u|^{2^*(\gamma)-2}u \in L^\infty_{\mathrm{loc}}((0,T_m);L^{\sigma_1}(|x|<1))
\end{equation}
for any $1\le \sigma_1<d/\gamma$. 
Since $u\in L^\infty(0,T; L^{q_c}(\R^d)),$ H\"older's inequality implies 
\begin{equation}\label{proof:energy-id_3}
	|x|^{-\gamma} |u|^{2^*(\gamma)-2}u\in L^\infty((0,T);L^{\sigma_2}(|x|\ge1))
\end{equation}
for any $\sigma_2 > 2d/(d+2)$. 
Let us divide the proof into two cases: 
\begin{itemize}
	\item[(a)] $d\ge4$ or $d=3$ and $0\le \gamma <3/2$; 
	\item[(b)] $d=3$ and $3/2 \le \gamma <2$.
\end{itemize}

\noindent{\bf Case (a)}: Let $t_0\in (0,T_m)$.
Then, we have $u(t_0) \in \dot{H}^1(\R^d)$ by Proposition \ref{prop:wellposed1},
and 
\begin{equation}\label{proof:energy-id_4}
	|x|^{-\gamma} |u|^{2^*(\gamma)-2}u \in L^2_{\mathrm{loc}}([t_0, T_m); L^2(\R^d))
\end{equation}
by \eqref{proof:energy-id_2} and \eqref{proof:energy-id_3} 
provided that $d\ge4$ or $d=3$ and $0\le \gamma <3/2$.
Hence, we can apply the maximal regularity for parabolic equations to obtain
\begin{equation}\label{proof:energy-id_5}
	\partial_t u, \Delta u \in L^2_{\mathrm{loc}}([t_0, T_m); L^2(\R^d)).
\end{equation}
Then, \eqref{proof:energy-id_4} and \eqref{proof:energy-id_5} ensure the energy identity \eqref{energy-id} for any $t \in [t_0, T_m)$.

\smallskip

\noindent{\bf Case (b)}: 
It follows from (vii) in Proposition \ref{prop:wellposed1} and \eqref{proof:energy-id_3} that
\[
	\partial_t u, \Delta u, |x|^{-\gamma} |u|^{2^*(\gamma)-2}u \in L^2_{\mathrm{loc}}((0,T_m) ; L^2(|x|\ge1)).
\]
Hence, multiplying the equation \eqref{crtHS} by $\overline{\partial_t u}$ and integrating it over $[t_0,t]\times \{|x|\ge1\}$
are justified. 
On the other hand, we see from (vii) in Proposition \ref{prop:wellposed1} and \eqref{proof:energy-id_2} that 
\[
	\partial_t u \in L^2_{\mathrm{loc}}((0,T_m) ; L^3(|x|<1)) \subset L^2_{\mathrm{loc}}((0,T_m) ; L^2(|x|<1)),
\]
\[
	|x|^{-\gamma} |u|^{2^*(\gamma)-2}u \in L^2_{\mathrm{loc}}((0,T_m) ; L^\frac32(|x|<1)).
\]
Then, we also have
\[
	\Delta u \in L^2_{\mathrm{loc}}((0,T_m) ; L^\frac32(|x|<1)),
\]
as $u$ satisfies the differential equation \eqref{crtHS} by Remark \ref{rem:classical}.
Hence, multiplying \eqref{crtHS} by $\overline{\partial_t u}$ and integrating it over $[t_0,t]\times \{|x|<1\}$
are also justified. 
The above argument ensures the energy identity \eqref{energy-id} for any $t \in [t_0, T_m)$.

Finally, it follows from \eqref{energy-id} that 
\begin{equation}\label{energy-ineq_t0}
	E_{\gamma}(u(t)) \le E_{\gamma}(u(t_0))
\end{equation}
for any $t_0 \in (0, T_m)$. Since the energy $E_{\gamma}(u(t))$ is continuous in $t\in [0,T_m)$, 
we have 
\[
	E_{\gamma}(u(t)) \le E_{\gamma}(u_0)
\]
by taking the limit of \eqref{energy-ineq_t0} as $t_0 \to 0$.
Thus, we conclude Proposition \ref{prop:energy-id}.
\end{proof}

\begin{rem}
The proof of case (a) cannot be applied to the case (b) 
as the nonlinear term does not necessarily satisfy 
\eqref{proof:energy-id_4} in the case (b).
For example, the ground state $W_\gamma$ given in \eqref{Talenti}, which is also a mild solution to \eqref{crtHS}, does not satisfy \eqref{proof:energy-id_4}, as 
$\Delta W_\gamma = |x|^{-\gamma} W_\gamma^{2^*(\gamma)-1} \in L^2(\R^d)$ if and only if $d\ge4$ or $d=3$ and $0\le \gamma<3/2$.
In contrast, we can perform the argument in the proof of case (b) only if $d=3$, as it relies on (vii) in Proposition \ref{prop:wellposed1} 
(see Remark \ref{rem:restriction-q} below). 
\end{rem}

\begin{rem}\label{rem:energy}
We do not know whether it is possible to take the limit as $t_0 \searrow 0$ of the integral
\[
\int_{t_0}^t\int_{\R^d}|\partial_t u(\tau,x)|^2\, dx d\tau.
\]
Hence, in discussing near $t=0$ in Subsection \ref{sub:2.5}, we will use the energy inequality \eqref{energy-ineq},
instead of the energy identity \eqref{energy-id}. 
\end{rem}

\subsection{Variational arguments}\label{sub:2.5}
We only consider the case $E_\gamma(u_0)<l_{HS}$, as the other case where $E_\gamma(u_0)=l_{HS}$ and $J_\gamma(u_0)\ne 0$ is reduced to this case. 
In fact, let $u_0 \in \dot H^1(\R^d)$ with $E_\gamma(u_0)=l_{HS}$ and $J_\gamma(u_0)\ne 0$. 
Suppose that there exists a time $t_1\in (0,T_m)$ such that $E_\gamma(u(t_1)) = E_\gamma(u_0)$. 
Then, by the energy inequality \eqref{energy-ineq}, we have $E_\gamma(u(t)) = E_\gamma(u_0)$ for any $t \in [0,t_1]$. 
Furthermore, for any $t_0 \in (0,t_1)$, the solution $u$ is stationary in the interval $[t_0,t_1]$ by the energy identity \eqref{energy-id}, 
and hence, $J_\gamma(u(t)) = 0$ for any $t\in [t_0,t_1]$. 
However this contradicts $J_\gamma(u_0)\ne 0$ and the continuity of $J_\gamma(u(t))$ in $t\in [0,t_1]$. 
Therefore, $E(u(t)) < E(u_0) = l_{HS}$ for any $t\in (0,T_m)$. 
Thus, we only have to consider the case $E_\gamma(u_0)<l_{HS}$. 

Let us define a stable set $\mathcal{M}^+$ and an unstable set $\mathcal{M}^-$ in the energy space $\dot{H}^1(\R^d)$ as
\begin{equation}\nonumber\begin{aligned}
	&\mathcal M^+ := \{ \phi \in \dot{H}^1(\mathbb R^d) \ ;\,  E_\gamma(\phi) < l_{HS}, J_\gamma(\phi) \ge0\},\\
	&\mathcal M^- := \{ \phi \in \dot{H}^1(\mathbb R^d) \ ;\,  E_\gamma(\phi) < l_{HS}, J_\gamma(\phi) <0\}, 
\end{aligned}\end{equation}
respectively. 
The following lemma means that the sets $\mathcal{M}^{\pm}$ are invariant under the semiflow associated with \eqref{crtHS} and 
coercive inequalities for $\mathcal{M}^{\pm}$.

\begin{lem}
\label{lem:invariant}
Let $u$ be a mild solution to \eqref{crtHS} with initial data $u_0\in \dot{H}^1(\R^d)$. 
Then, the following statements hold{\rm :}
\begin{itemize}
	\item[(i)] 
	If $u_0\in \mathcal{M}^{\pm}$, then $u(t)\in \mathcal{M}^{\pm}$ for any $t\in [0,T_m)$, where double-sign corresponds.

	\item[(ii)] 
	If $u_0 \in \mathcal M^{+}$, then there exists $\delta>0$ such that  
	\[
		J_\gamma(u(t)) \ge \delta \|u(t)\|_{\dot{H}^1(\mathbb R^d)}
	\]
	for any $t \in (0,T_{m})$.

	\item[(iii)] 
	If $u_0 \in \mathcal M^{-}$, then 
	\[
		J_\gamma(u(t)) < - 2^*(\gamma) \{l_{HS}- E_\gamma(u(t))\}
	\]
	for any $t \in (0,T_{m})$.
\end{itemize}
\end{lem}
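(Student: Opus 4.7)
The plan is to establish (ii) and (iii) as purely variational inequalities on $\mathcal M^+$ and $\mathcal M^-$ respectively, and then deduce the invariance in (i) by combining them with the energy inequality \eqref{energy-ineq} and the continuity of $t\mapsto J_\gamma(u(t))$. As noted at the opening of Subsection \ref{sub:2.5}, it suffices to treat $E_\gamma(u_0)<l_{HS}$.

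For (ii), I would exploit the identity $E_\gamma(\phi)=\tfrac{1}{2^*(\gamma)}J_\gamma(\phi)+\tfrac{2-\gamma}{2(d-\gamma)}\|\phi\|_{\dot H^1}^2$, which is immediate from the definitions. For $\phi\in\mathcal M^+$, the bound $J_\gamma(\phi)\ge 0$ gives $\|\phi\|_{\dot H^1}^2\le\tfrac{2(d-\gamma)}{2-\gamma}E_\gamma(\phi)$, which inserted into $J_\gamma(\phi)\ge\|\phi\|_{\dot H^1}^2-C_{HS}^{2^*(\gamma)}\|\phi\|_{\dot H^1}^{2^*(\gamma)}$ (from Hardy-Sobolev) together with the explicit formula for $l_{HS}$ in Remark~\ref{rem:ground} yields $C_{HS}^{2^*(\gamma)}\|\phi\|_{\dot H^1}^{2^*(\gamma)-2}\le(E_\gamma(\phi)/l_{HS})^{(2-\gamma)/(d-2)}$. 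Along a trajectory with $E_\gamma(u(t))\le E_\gamma(u_0)<l_{HS}$, the right-hand side stays $\le 1-\delta$ for a uniform $\delta>0$ depending only on $E_\gamma(u_0)/l_{HS}$, giving $J_\gamma(u(t))\ge\delta\|u(t)\|_{\dot H^1}^2$. (I read the unsquared norm appearing in the statement of (ii) as a typographical slip; the squared norm is what this estimate produces and is dimensionally consistent.)

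For (iii), let $\phi\in\mathcal M^-$, so $\phi\neq 0$ and $J_\gamma(\phi)<0$. The scalar function $\lambda\mapsto J_\gamma(\lambda\phi)=\lambda^2\|\phi\|_{\dot H^1}^2-\lambda^{2^*(\gamma)}\int|\phi|^{2^*(\gamma)}/|x|^\gamma\,dx$ is positive near $\lambda=0$, vanishes at a unique $\lambda_0>0$, and is negative beyond; the sign condition on $J_\gamma(\phi)$ then forces $\lambda_0<1$ \emph{strictly}. Since $\lambda_0\phi\in\mathcal N_\gamma$, the Nehari-infimum characterization \eqref{mini} of $l_{HS}$ yields $l_{HS}\le E_\gamma(\lambda_0\phi)=\tfrac{2-\gamma}{2(d-\gamma)}\lambda_0^2\|\phi\|_{\dot H^1}^2<\tfrac{2-\gamma}{2(d-\gamma)}\|\phi\|_{\dot H^1}^2$; rewriting the right-hand side via the identity of the previous paragraph as $E_\gamma(\phi)-\tfrac{1}{2^*(\gamma)}J_\gamma(\phi)$ and rearranging produces (iii).

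For (i), the energy inequality \eqref{energy-ineq} immediately gives $E_\gamma(u(t))\le E_\gamma(u_0)<l_{HS}$, so only the sign of $J_\gamma(u(t))$ is at stake; this is a continuous function of $t$ because $u\in C([0,T_m);\dot H^1(\R^d))$ and the Hardy-Sobolev inequality of Lemma~\ref{t:HrdySob2} makes $\phi\mapsto\int|\phi|^{2^*(\gamma)}/|x|^\gamma\,dx$ continuous on $\dot H^1(\R^d)$. For $u_0\in\mathcal M^+$, assume toward contradiction that $J_\gamma(u(t_*))<0$ for some $t_*\in(0,T_m)$ and set $t_0=\inf\{t\in[0,t_*]:J_\gamma(u(t))<0\}$, so $J_\gamma(u(t_0))=0$; if $u(t_0)\neq 0$ then $u(t_0)\in\mathcal N_\gamma$ yields $E_\gamma(u(t_0))\ge l_{HS}$, contradicting the energy bound, while if $u(t_0)=0$ then forward uniqueness of mild solutions (Proposition~\ref{prop:wellposed1}(ii)) forces $u\equiv 0$ past $t_0$, contradicting $J_\gamma(u(t_*))<0$. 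For $u_0\in\mathcal M^-$, applying (iii) wherever $u(t)\in\mathcal M^-$ supplies a uniform negative gap $J_\gamma(u(t))<-2^*(\gamma)(l_{HS}-E_\gamma(u_0))<0$, which continuity propagates up to any would-be exit time and thus prevents $u(t)$ from ever leaving $\mathcal M^-$. The main obstacle is squeezing the strict inequality $\lambda_0<1$ in (iii) to produce the strict coercive bound with a uniform gap; the rest is standard continuity-plus-uniqueness bookkeeping.
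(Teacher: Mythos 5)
Your proposal is correct, and for parts (ii) and (iii) it takes a genuinely different (and arguably cleaner) route than the paper. For (ii), the paper introduces the auxiliary function $G(y)=\tfrac12 y-\tfrac{C_{HS}^{2^*(\gamma)}}{2^*(\gamma)}y^{2^*(\gamma)/2}$, deduces $\|u(t)\|_{\dot H^1}^2\le(1-\delta_1)y_c$ from $G(\|u(t)\|_{\dot H^1}^2)\le(1-\delta_0)G(y_c)$, and then appeals somewhat loosely to ``the convexity of $J$'' to get $J_\gamma(u(t))\ge C\delta_1\|u(t)\|_{\dot H^1}^2$; your direct computation via the identity $E_\gamma=\tfrac{1}{2^*(\gamma)}J_\gamma+\tfrac{2-\gamma}{2(d-\gamma)}\|\cdot\|_{\dot H^1}^2$ and Hardy--Sobolev reaches the same squared bound with an explicit constant, and your reading of the unsquared norm in the statement as a slip is vindicated by the paper's own proof, which only establishes the squared version (the unsquared one fails as $\|\phi\|_{\dot H^1}\to0$). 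For (iii), the paper works with $K(\lambda)=E_\gamma(e^\lambda u)$ and integrates the differential inequality $K''-2^*(\gamma)K'<0$ over $(\lambda_0,0]$, whereas you rescale directly onto the Nehari manifold; the two are equivalent, and your version makes the strictness coming from $\lambda_0<1$ more transparent. For (i) both arguments rest on continuity of $J_\gamma(u(\cdot))$ plus the energy inequality, but you are more careful than the paper on two points: you dispose of the degenerate case $u(t_1)=0$ (where $l_{HS}\le E_\gamma(u(t_1))$ would not follow from \eqref{mini}) via forward uniqueness, and you obtain $\mathcal M^-$-invariance from the quantitative gap in (iii) rather than by a bare ``similarly.'' One small point of logical bookkeeping: the uniform $\delta$ in the trajectory version of (ii) uses $J_\gamma(u(t))\ge0$ for all $t$, i.e.\ (i), so the order of deduction must be: pointwise variational inequalities, then (i), then the uniform bounds along the flow — your plan is consistent with this but it is worth making explicit.
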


The proof of this lemma is known (see, e.g., \cites{IT-arxiv,KM-2006}). 
However, to be self-contained, we give the proof. 

\begin{proof} 
First, we show the assertion (i). Let $u_0 \in \mathcal M^{+}$. Then $u(t)\in \mathcal M^{+} \cup \mathcal M^{-}$ for any $t \in [0,T_m)$ by the energy inequality \eqref{energy-ineq} in Remark \ref{rem:energy}. Suppose that there exists a positive time $t_0 \in (0,T_m)$ such that $u(t_0) \in \mathcal M^-$. Then, as $J_\gamma(u(\cdot))$ is continuous on $[0,T_m)$, there exists a time $t_1 \in [0,t_0)$ such that the identity $J_\gamma(u(t_1)) = 0$ holds.
From the definition \eqref{mini} of $l_{HS}$ and the energy inequality \eqref{energy-ineq}, we see that 
\[
	l_{HS}\le E_\gamma(u(t_1))\le E_\gamma(u_0),
\]
which contradicts the assumption $E_\gamma(u_0)<l_{HS}$. 
Thus $u(t)\in \mathcal M^{+}$ for any $t \in [0,T_m)$.
Similarly, in the case where $u_0\in \mathcal M^-$, we can prove that $u(t)\in \mathcal{M}^-$ for any $t\in [0,T_m)$, which completes the proof of (i).

Next, we show the statement (ii). 
Since $E_\gamma(u_0)<l_{HS}$, there exists $\delta_0>0$ such that the estimate
\[
     E_\gamma(u_0)\le (1-\delta_0)l_{HS}
\]
holds. We define a function $G:[0,\infty)\rightarrow\R$ given by
\[
    G(y):=\frac{1}{2}y-\frac{C_{HS}^{2^*(\gamma)}}{2^*(\gamma)}y^{\frac{2^*(\gamma)}{2}}.
\]
Then, we see that $G'(y)=0$ if and only if 
\[
	y=y_c:= C_{HS}^{-\frac{22^*(\gamma) }{2^*(\gamma)-2} },
\]
and hence, we have 
$G(y_c)= l_{HS}$ 
and $G''(y_c) < 0.$ From the Hardy-Sobolev inequality \eqref{HS-ineq} and energy inequality \eqref{energy-ineq}, we deduce that 
\begin{equation}\label{lem:vari:pr1}
	G\left(\|u(t)\|_{\dot H^1}^2 \right) 
	\le E_\gamma(u(t)) 
	\le E_\gamma(u_0) 
	\le(1-\delta_0) l_{HS} 
	= (1-\delta_0) G(y_c)
\end{equation}
for any $t\in [0,T_m)$. 
Since $\|u(t)\|_{\dot H^1}^2 < y_c$ and $F$ is non-negative and strictly monotone increasing on $(0,y_c),$ the inequality \eqref{lem:vari:pr1} implies that 
there exists some $\delta_1>0$ independent of $u(t)$ such that 
\[
	\|u(t)\|_{\dot H^1}^2 \le(1-\delta_1) y_c
\]
for any $t\in [0,T_m)$  
(In fact, it suffices to take $\delta_1 =\delta_1 (\delta_0, d, \gamma) 
		= 1 - \frac{G^{-1} ( (1-\delta_0) G(y_c))} {y_c}$).  
The convexity of $J$ implies that there exists a positive constant 
$C$ such that 
\begin{equation}\nonumber\begin{aligned}
	J_\gamma(u(t))  \ge C \min \left\{\|u(t)\|_{\dot H^1}^2, y_c-\|u(t)\|_{\dot H^1}^2\right\}
		\ge C \delta_1 \|u(t)\|_{\dot H^1}^2,
\end{aligned}\end{equation}
which completes the proof of (ii). 

Finally, we show (iii). Let $t\in [0,T_m)$ be fixed. By (i) in Lemma \ref{lem:invariant}, we have
$J_\gamma(u(t))<0$ for all $t\in [0,T_m).$
Setting 
$$K(\lambda) := E(e^{\lambda} u)
= \frac{e^{2\lambda} }{2} \|u\|_{\dot H^1}^2 
	- \frac{e^{2^*(\gamma)\lambda} }{2^*(\gamma)} 
	\|u\|_{L^{2^*(\gamma)}_\gamma}^{2^*(\gamma)},
	\quad \lambda\in\R,$$ 
we can readily check 
\begin{equation}\label{lem:vari:pr2}
	K''(\lambda) -  2^*(\gamma) K'(\lambda)
	= - (2^*(\gamma) -2)e^{2\lambda}  \|u\|_{\dot H^1}^2<0
\end{equation} 
for any $\lambda \in \R.$ 
Then, we see that $K'$ is continuous in $\lambda,$
$K'(0) = J_\gamma(u(t)) <0$ and $K'(\lambda)>0$ for $-1 <\lambda<0.$
Thus, the intermediate value theorem ensures that there exists $\lambda_0<0$ 
such that $K'(\lambda_0)=0,$ which implies that 
$e^{\lambda_0} u(t) \in \mathcal{N}_\gamma$ and $K(\lambda_0)\ge l_{HS}.$ 
Integrating \eqref{lem:vari:pr2} for the interval $(\lambda_0,0],$ we obtain 
\[
	J_\gamma(u(t)) = K'(0) < 2^*(\gamma)\{ K(0)-K(\lambda_0) \}
		\le 2^*(\gamma)\{E_\gamma(u(t))-l_{HS}\}.
\]
Thus, we conclude the lemma.
\end{proof}

Now, we state the following lemma.

\begin{lem}\label{lem:EJ}
Let $\{f_j\}_{j=1}^J \subset  \dot{H}^1(\mathbb R^d)$. 
Suppose that there exist $\varepsilon>0$ and $0<\delta<l_{HS}$ with $2 \varepsilon < \delta$ such that
\begin{equation}\label{assumE}
	E\bigg( \sum_{j=1}^J f_j\bigg) < l_{HS} - \delta,\quad E\bigg( \sum_{j=1}^J f_j\bigg) > \sum_{j=1}^J E( f_j) - \varepsilon,
\end{equation}
\begin{equation}\label{assumJ}
	J\bigg( \sum_{j=1}^J f_j\bigg) \ge -\varepsilon,\quad J\bigg( \sum_{j=1}^J f_j\bigg) \le \sum_{j=1}^J J( f_j) + \varepsilon.
\end{equation}
Then, 
\[
	0 \le E_\gamma (f_j) < l_{HS}\quad \text{and}\quad J_\gamma(f_j) \ge0
\]
for any $1 \le j \le J$.
\end{lem}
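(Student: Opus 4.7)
The plan is to exploit the algebraic identity
\[
E_\gamma(f) - \frac{1}{2^*(\gamma)}\, J_\gamma(f) = \frac{2-\gamma}{2(d-\gamma)}\,\|f\|_{\dot H^1}^2, \qquad f\in \dot H^1(\R^d),
\]
which follows immediately from the definitions, together with a quantitative variational lower bound: for every $f \in \dot H^1(\R^d)\setminus\{0\}$ with $J_\gamma(f)<0$ one has $\frac{2-\gamma}{2(d-\gamma)}\,\|f\|_{\dot H^1}^2 > l_{HS}$. The first reduction is to establish this bound. Given such an $f$, consider $\lambda\mapsto J_\gamma(\lambda f) = \lambda^2\|f\|_{\dot H^1}^2 - \lambda^{2^*(\gamma)} \int_{\R^d} |f|^{2^*(\gamma)}|x|^{-\gamma}\,dx$. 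Since $J_\gamma(f)<0$, this function vanishes at a unique $\lambda_0\in(0,1)$, and $\lambda_0 f\in \mathcal N_\gamma$. The definition \eqref{mini} of $l_{HS}$ yields $E_\gamma(\lambda_0 f)\ge l_{HS}$, and evaluating the identity at $\lambda_0 f$ (where $J_\gamma=0$) gives $\frac{2-\gamma}{2(d-\gamma)}\,\lambda_0^2\|f\|_{\dot H^1}^2 \ge l_{HS}$. Since $\lambda_0<1$, the strict bound follows.

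Next, I would argue by contradiction that $J_\gamma(f_j)\ge0$ for every $j$. Suppose on the contrary that $J_\gamma(f_{j_0})<0$ for some $j_0$. Summing the identity over $j$ and using $\sum_j\|f_j\|_{\dot H^1}^2 \ge \|f_{j_0}\|_{\dot H^1}^2$ together with the variational bound of the previous paragraph gives
\[
\sum_{j=1}^J E_\gamma(f_j) = \frac{1}{2^*(\gamma)}\sum_{j=1}^J J_\gamma(f_j) + \frac{2-\gamma}{2(d-\gamma)}\sum_{j=1}^J \|f_j\|_{\dot H^1}^2 > \frac{1}{2^*(\gamma)}\sum_{j=1}^J J_\gamma(f_j) + l_{HS}.
\]
The second inequality of \eqref{assumJ} combined with the first of the same display produces $\sum_j J_\gamma(f_j) \ge J_\gamma(\sum_j f_j) - \varepsilon \ge -2\varepsilon$, so $\sum_j E_\gamma(f_j) > l_{HS} - 2\varepsilon/2^*(\gamma)$. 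On the other hand, the two parts of \eqref{assumE} yield $\sum_j E_\gamma(f_j) < l_{HS}-\delta + \varepsilon$. Combining, $\delta - \varepsilon < 2\varepsilon/2^*(\gamma)$, i.e. $\delta < \varepsilon\bigl(1 + 2/2^*(\gamma)\bigr)$. Because $0\le\gamma<2$ forces $2^*(\gamma)>2$, the right side is strictly less than $2\varepsilon$, contradicting the hypothesis $2\varepsilon<\delta$. Hence $J_\gamma(f_j)\ge0$ for all $j$.

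Finally, once $J_\gamma(f_j)\ge0$ is known, the algebraic identity shows $E_\gamma(f_j)\ge \frac{2-\gamma}{2(d-\gamma)}\|f_j\|_{\dot H^1}^2\ge 0$. Consequently each individual $E_\gamma(f_j)$ is bounded by the full sum, and \eqref{assumE} gives
\[
E_\gamma(f_j) \le \sum_{k=1}^J E_\gamma(f_k) < E_\gamma\Bigl(\sum_{k=1}^J f_k\Bigr) + \varepsilon < l_{HS} - \delta + \varepsilon < l_{HS},
\]
where the last inequality uses $\varepsilon<\delta/2<\delta$. This completes both assertions.

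The core obstacle is isolating the clean variational inequality $\frac{2-\gamma}{2(d-\gamma)}\|f\|_{\dot H^1}^2 > l_{HS}$ whenever $J_\gamma(f)<0$, and recognizing that the constant $1/2^*(\gamma)$ produced by the identity combines with $2^*(\gamma)>2$ to defeat the gap $2\varepsilon<\delta$; once these ingredients are in place the rest is bookkeeping with explicit constants.
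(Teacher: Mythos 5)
Your proof is correct and follows essentially the same route as the paper: both argue by contradiction, scale the offending profile $f_{j_0}$ by a factor $\lambda_0\in(0,1)$ onto the Nehari manifold to invoke the mountain-pass level $l_{HS}$, and close with the same $\varepsilon$--$\delta$ bookkeeping (the paper reaches $l_{HS}<l_{HS}-\delta+2\varepsilon$, you reach $\delta<\varepsilon(1+2/2^*(\gamma))<2\varepsilon$). The only difference is cosmetic: the paper eliminates the kinetic term via $I_\gamma(f)=E_\gamma(f)-\tfrac12 J_\gamma(f)=\tfrac{2-\gamma}{2(d-\gamma)}\|f\|_{L^{2^*(\gamma)}_\gamma}^{2^*(\gamma)}$, whereas you eliminate the potential term via $E_\gamma(f)-\tfrac{1}{2^*(\gamma)}J_\gamma(f)=\tfrac{2-\gamma}{2(d-\gamma)}\|f\|_{\dot H^1}^2$; both quantities are non-negative, decrease under the scaling $\lambda_0<1$, and are superadditive over the $f_j$, which is all the argument needs.
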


\begin{proof}
For $f \in \dot{H}^1(\mathbb R^d)$, we define 
\[
	I_\gamma(f) := E_\gamma(f) - \frac12 J_\gamma(f) =  \frac{2-\gamma}{2(d-\gamma)}\|f\|_{L^{2^*(\gamma)}_\gamma}^{2^*(\gamma)}. 
\]
The inequalities \eqref{assumE} give
\[
	E_\gamma(f_j) \le \sum_{j=1}^J E_\gamma(f_j) < E_\gamma\bigg( \sum_{j=1}^J f_j\bigg) + \varepsilon < l_{HS}- \delta + \varepsilon < l_{HS}
\]
for any $1 \le j \le J$. 
Moreover, if we can prove that 
\begin{equation}\label{J-posi}
	J_\gamma(f_j) \ge0 \quad \text{for any $1\le j \le J$},
\end{equation} 
we can obtain $E_\gamma (f_j) = I_\gamma(f_j) + \frac12 J_\gamma(f_j) \ge 0$ 
for any $1 \le j \le J$. 
We prove \eqref{J-posi} by contradiction. 
Suppose that there exists some $j_0 \in \{1,\ldots, J\}$ such that $J (f_{j_0})<0$. 
Then, there exists a real number $\lambda_0 \in (0,1)$ such that 
$J_\gamma (\lambda_0 f_{j_0})=0$. Hence,
\[
\begin{split}
	l_{HS} & \le E_\gamma(\lambda_0 f_{j_0}) 
	= I_\gamma(\lambda_0 f_{j_0}) < I_\gamma(f_{j_0}) 
	\le \sum_{j=1}^J I_\gamma(f_{j})
	= \sum_{j=1}^J E_\gamma(f_j) - \frac12 \sum_{j=1}^J J_\gamma(f_j) \\
	& < E_\gamma\bigg( \sum_{j=1}^J f_j\bigg) + \varepsilon 
	- \frac12 \bigg\{ J_\gamma\bigg( \sum_{j=1}^J f_j\bigg) - \varepsilon\bigg\}
	< l_{HS} - \delta + \varepsilon - \frac12 ( - \varepsilon - \varepsilon) 
	< l_{HS},
\end{split}
\]
which is a contradiction. Therefore, \eqref{J-posi} is proved. 
The proof of Lemma \ref{lem:EJ} is complete. 
\end{proof}

\subsection{Linear profile decomposition}\label{sub:2.6}
In this subsection, we state the following linear profile decomposition, which is a key tool to construct the minimal energy blow-up solution in the proof of (i) in Theorem~\ref{thm:GD}. 
For convenience, we use the notation $\|\cdot\|_{L^{2^*(\gamma)}_\gamma}$ for the weighted norm given by 
\[
\|f\|_{L^{2^*(\gamma)}_\gamma}
:= 
\left(\int_{\R^d} |f(x)|^{2^*(\gamma)} |x|^{-\gamma}\, dx \right)^{\frac{1}{ 2^*(\gamma)} }.
\]

\begin{prop}\label{prop:profile}
Let $\{\phi_n\}_{n=1}^\infty$ be a sequence of functions in $\dot{H}^1(\mathbb R^d)$. 
Then, after possibly passing to a subsequence {\rm (}in which case, we rename it $\phi_n${\rm )}, there exist $J^* \in \{1,2,\ldots,\infty\}$, $\{\psi^j\}_{j=1}^{J^*} \subset \dot{H}^1(\mathbb R^d)$, $\{\lambda_n^j\}_{j=1}^{J^*} \subset (0,\infty)$, and $\{x_n^j\}_{j=1}^{J^*} \subset \mathbb R^d$ such that for $1\le J \le J^*$
\begin{equation}\label{profile_0}
	\phi_n(x) = \sum_{j=1}^{J} \frac{1}{(\lambda_n^j)^{\frac{d-2}{2}}} \psi^j\left( \frac{x-x_n^j}{\lambda_n^j}\right) + w_n^J(x),
\end{equation}
where $w_n^J \in \dot{H}^1(\mathbb R^d)$ is such that 
\begin{equation}\label{profile_1}
	\limsup_{J\to J^*}
	\lim_{n\to\infty} \|e^{t\Delta}w_n^J\|_{\mathcal K^q}
	= 0,
\end{equation}
\begin{equation}\label{profile_2}
	(\lambda_n^j)^{\frac{d-2}{2}} w_n^J(\lambda_n^j x + x_n^j) \rightharpoonup 0 
	\quad \text{in } \dot{H}^1(\mathbb R^d) \text{ as }n\to\infty
\end{equation}
for any $1\le j \le J$, and 
\begin{equation}\label{profile_3}
	x_n^j \equiv 0 \quad \text{or} \quad |x_n^j|\to\infty \text{ and }\frac{|x_n^j|}{\lambda_n^j}\to\infty \text{ as $n\to\infty$}\quad \text{for }1\le j \le J^*.
\end{equation}
Moreover, the scaling and translation parameters are asymptotically orthogonal in the sense that 
\begin{equation}\label{profile_4}
	\frac{\lambda_n^j}{\lambda_n^i} + \frac{\lambda_n^i}{\lambda_n^j} + \frac{|x_n^i - x_n^j|^2}{\lambda_n^j\lambda_n^i}
	\to + \infty
\end{equation}
as $n\to \infty$ for any $i\not = j$. Furthermore, for any $1\le J \le J^*$, we have the following decoupling properties{\rm :} 
\begin{equation}\label{profile_5}
	\lim_{n\to\infty}\Big|\|\phi_n\|_{\dot{H}^1}^2 - \sum_{j=1}^J \|\psi^j\|_{\dot{H}^1}^2 - \|w_n^J\|_{\dot{H}^1}^2\Big|=0,
\end{equation}
\begin{equation}\label{profile_6}
	\lim_{n\to\infty}\Big|
	\|\phi_n\|_{L^{2^*(\gamma)}_\gamma}^{2^*(\gamma)} - \sum_{j=1}^J \|\psi^j\|_{L^{2^*(\gamma)}_\gamma}^{2^*(\gamma)} 
	- \|w_n^J\|_{L^{2^*(\gamma)}_\gamma}^{2^*(\gamma)}\Big| =0.
\end{equation}
Especially, 
\begin{equation}\label{decop-E-phi_n}
	\lim_{n\to\infty}\Big|
	E_\gamma(\phi_n) - \sum_{j=1}^J E_\gamma(\psi^j) - E_\gamma(w_n^J)
	\Big| = 0,
\end{equation}
\begin{equation}\label{decop-J-phi_n}
	\lim_{n\to\infty}\Big|
	J_\gamma(\phi_n) - \sum_{j=1}^J J_\gamma(\psi^j) - J_\gamma(w_n^J) 
	\Big| = 0
\end{equation}
for any $1\le J \le J^*$.
\end{prop}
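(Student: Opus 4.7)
The plan is to follow the classical Keraani--Bahouri--G\'erard profile-extraction scheme, adapted to the heat semigroup and the norm $\mathcal K^q$, with a final orthogonalization step tailored to the translation-breaking of \eqref{crtHS} in order to enforce the dichotomy \eqref{profile_3}.

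First, I would prove a refined Sobolev-type inequality of the form
\[
\|e^{t\Delta} f\|_{\mathcal K^q} \lesssim \|f\|_{\dot H^1}^{\theta}\,\|f\|_{\dot B^{-\sigma}_{\infty,\infty}}^{1-\theta}
\]
for some $\theta\in(0,1)$ and $\sigma>0$, via Littlewood--Paley decomposition together with the smoothing estimate \eqref{decayest1}. Its key consequence is a bubble-extraction lemma: if $\{f_n\}$ is $\dot H^1$-bounded and $\liminf_{n\to\infty}\|e^{t\Delta}f_n\|_{\mathcal K^q}>0$, then, after passing to a subsequence, there exist $(\lambda_n,y_n)\in(0,\infty)\times\R^d$ such that $\lambda_n^{(d-2)/2}f_n(\lambda_n\,\cdot\,+y_n)$ converges weakly in $\dot H^1(\R^d)$ to a nonzero limit whose norm is bounded below by a positive function of $\liminf_{n\to\infty}\|e^{t\Delta}f_n\|_{\mathcal K^q}$.

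Next, I would extract profiles iteratively. Applying the bubble lemma to $\phi_n$ yields $(\lambda_n^1,y_n^1)$ and a weak limit $\psi^1$; set $w_n^1:=\phi_n-(\lambda_n^1)^{-(d-2)/2}\psi^1((\,\cdot\,-y_n^1)/\lambda_n^1)$, so that the Pythagorean identity $\|\phi_n\|_{\dot H^1}^2=\|\psi^1\|_{\dot H^1}^2+\|w_n^1\|_{\dot H^1}^2+o(1)$ follows from weak convergence. Iterating on $w_n^j$ produces profiles whose squared $\dot H^1$-norms sum to at most $\limsup_{n\to\infty}\|\phi_n\|_{\dot H^1}^2$, forcing $\|\psi^j\|_{\dot H^1}\to 0$ as $j\to J^*$, which via the refined inequality yields \eqref{profile_1}. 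The asymptotic orthogonality \eqref{profile_4} is established by induction: were it to fail for some pair $i<j$, the rescaling at step $j$ of $w_n^{j-1}$ would still retain a nontrivial piece of $\psi^i$, contradicting the weak-limit construction at step $i$. The decouplings \eqref{profile_5}--\eqref{decop-J-phi_n} then follow by a Br\'ezis--Lieb-type argument using \eqref{profile_4}; for the weighted term \eqref{profile_6}, one uses that for profiles with $|y_n^j|/\lambda_n^j\to\infty$ the weight $|x|^{-\gamma}$ is asymptotically uniformly negligible on the bubble's support, ensuring decoupling from profiles centered at the origin.

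Finally, I would enforce \eqref{profile_3}. For each $j$, pass to a subsequence so that either $y_n^j/\lambda_n^j$ converges to some $c_j\in\R^d$, or $|y_n^j|/\lambda_n^j\to\infty$. In the first case I replace $\psi^j(y)$ by $\psi^j(y-c_j)$ and set $x_n^j\equiv 0$; the resulting error is $o(1)$ in $\dot H^1$ by continuity of translations and is absorbed into $w_n^J$. In the second case, a further subsequence yields $|y_n^j|\to\infty$, with the residual subcase of $y_n^j$ bounded but $|y_n^j|/\lambda_n^j\to\infty$ handled by an additional shift centered at the accumulation point of $y_n^j$. The principal obstacle is the interplay between this orthogonalization and the weighted decoupling \eqref{profile_6}: since translation does not commute with $|x|^{-\gamma}$, the cross terms between differently framed profiles in the weighted integral must be shown to vanish, which requires separating the contributions of profiles anchored at the origin from those escaping to infinity and exploiting the asymptotic structure of $|x|^{-\gamma}$ on each bubble's support.
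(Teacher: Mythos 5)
Most of your proposal re-derives the standard $\dot H^1(\R^d)$ profile decomposition from scratch (refined Sobolev inequality, inverse/bubble lemma, iterative extraction, induction for the orthogonality \eqref{profile_4}, reorganization of parameters for \eqref{profile_3}). The paper does none of this: it quotes the known decomposition (Theorem 4.7 in \cite{KM-2006}, with \eqref{profile_3} obtained by resetting parameters as in Proposition 3.2 of \cite{MiaMurZhe2020}) and deduces \eqref{profile_1} from the known $L^{2^*}$ smallness of $w_n^J$ via $\|e^{t\Delta}w_n^J\|_{\mathcal K^q}\lesssim \|w_n^J\|_{L^{2^*}}$. Your route is legitimate but far longer, and everything in it except the weighted decoupling is a re-proof of cited results.

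The genuine gap sits exactly at the one item the paper actually proves, namely \eqref{profile_6}. You correctly name the obstacle---the weight $|x|^{-\gamma}$ does not commute with the translations $x_n^j$, so cross terms between profiles living in different frames must be controlled---but you do not supply the mechanism; ``a Br\'ezis--Lieb-type argument using \eqref{profile_4}'' is a placeholder, and \eqref{profile_4} is not in fact what drives the paper's argument. The paper avoids cross terms altogether: it peels off one profile at a time, passes to that profile's own frame $f_n:=(\lambda_n^{J+1})^{(d-2)/2}w_n^J(\lambda_n^{J+1}x+x_n^{J+1})$, and applies the Brezis--Lieb lemma (Lemma \ref{lem:WBL}) for the measure $d\mu=|x|^{-\gamma}dx$, using only the a.e.\ convergence $f_n\to\psi^{J+1}$ that follows from the weak convergence \eqref{profile_2}; the pure dilation leaves $\|\cdot\|_{L^{2^*(\gamma)}_\gamma}$ invariant because $(d-2)\,2^*(\gamma)/2=d-\gamma$. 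Be aware, though, that the translation does not: one has $\|(\lambda_n^j)^{\frac{d-2}{2}}g(\lambda_n^j\cdot+x_n^j)\|_{L^{2^*(\gamma)}_\gamma}^{2^*(\gamma)}=\int|g(y)|^{2^*(\gamma)}\,|y+x_n^j/\lambda_n^j|^{-\gamma}\,dy$, so the frame-change identity is exact only for profiles with $x_n^j\equiv 0$, while for escaping profiles ($|x_n^j|/\lambda_n^j\to\infty$) the bubble's weighted norm in the original frame tends to $0$ rather than to $\|\psi^j\|_{L^{2^*(\gamma)}_\gamma}^{2^*(\gamma)}$. Your observation that the weight is asymptotically negligible on escaping bubbles is therefore the right ingredient for those profiles, but it must then be reconciled with the precise form of \eqref{profile_6}; and if you insist on proving the decoupling in the original frame for all profiles simultaneously, you must actually carry out the cross-term estimates you defer (e.g.\ approximate each profile by compactly supported functions and use \eqref{profile_4} to separate supports and scales). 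As written, the plan stops exactly where the new content of the proposition begins.
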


The profile decomposition for $\dot{H}^1(\R^d)$ is known (see, e.g., Theorem~4.3 in \cite{KM-2006} and Theorem 4.7 in \cite{KV-2013}). More precisely, 
it is known that 
for a sequence $\{\phi_n\}_{n=1}^\infty \subset \dot{H}^1(\mathbb R^d)$, 
after possibly passing to a subsequence, there exist $J^* \in \{1,2,\ldots,\infty\}$, $\{\psi^j\}_{j=1}^{J^*} \subset \dot{H}^1(\mathbb R^d)$, $\{\lambda_n^j\}_{j=1}^{J^*} \subset (0,\infty)$, and $\{x_n^j\}_{j=1}^{J^*} \subset \mathbb R^d$ such that
\eqref{profile_0}, \eqref{profile_2}, \eqref{profile_4}, \eqref{profile_5}, and the following assertions hold:
\begin{equation}\label{profile_1'}
	\lim_{J\to J^*}\lim_{n\to\infty} \|w_n^J\|_{L^{2^*}} = 0,
\end{equation}
\[
	\lim_{n\to\infty}\Big|
	\|\phi_n\|_{L^{2^*}}^{2^*} - \sum_{j=1}^J \|\psi^j\|_{L^{2^*}}^{2^*} - \|w_n^J\|_{L^{2^*}}^{2^*} \Big| = 0.
\]
The property \eqref{profile_1} follows from \eqref{profile_1'} and the inequality 
\[
	\|e^{t\Delta}w_n^J\|_{\mathcal K^q}\le C\|w_n^J\|_{L^{2^*}}.
\]
Moreover, \eqref{decop-E-phi_n} and \eqref{decop-J-phi_n} are immediate consequences of \eqref{profile_6}, 
and finally, we can reset the profiles $\psi^j$ and the remainder terms $w^J_n$ such that parameters $x_n^j$ and $\lambda_n^j$ satisfy \eqref{profile_3} 
(see, e.g., Proposition 3.2 in \cite{MiaMurZhe2020}).
Hence, we only have to show \eqref{profile_6}. For this purpose, 
we use the Brezis-Lieb lemma. 

\begin{lem}[Theorem 1 in \cite{BreLie1983}]\label{lem:WBL}
Let $1\le q<\infty$ and $\mu$ be a measure on $\mathbb R^d$, and let $\{f_n\}_n$ be a bounded sequence in $L^q(\mathbb R^d, d\mu)$ such that $f_n \to f$ almost everywhere in $\mathbb R^d$. Then, $f\in L^q(\mathbb R^d,d\mu)$ and 
\begin{equation}\label{decomposition}
	\lim_{n\to \infty}
	\big\{ \|f_n\|_{L^q(d\mu)}^q - \|f- f_n\|_{L^q(d\mu)}^q
	\big\} = \|f\|_{L^q(d\mu)}^q.
\end{equation}
Here, $L^q(\mathbb R^d,d\mu)$ is defined by 
\[
	L^q(\mathbb R^d,d\mu) := \left\{ f \,;\, \|f\|_{L^q(d\mu)} 
	=\left(\int_{\mathbb R^d} |f(x)|^q \, d\mu(x)\right)^{\frac{1}{q}}<\infty \right\}.
\]
\end{lem}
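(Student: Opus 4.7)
The plan is to follow the classical Brezis--Lieb argument, which reduces the norm identity \eqref{decomposition} to the dominated convergence theorem by means of an elementary pointwise inequality of Young type. First, since $\{f_n\}$ is bounded in $L^q(\R^d, d\mu)$ and $f_n \to f$ pointwise $\mu$-a.e., Fatou's lemma applied to $|f_n|^q$ immediately yields
\[
	\int_{\R^d} |f|^q \, d\mu \le \liminf_{n\to\infty} \int_{\R^d} |f_n|^q \, d\mu < \infty,
\]
so $f \in L^q(\R^d, d\mu)$.

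The analytic key I would establish next is the pointwise inequality: for every $\varepsilon > 0$ there exists $C_\varepsilon > 0$ such that
\[
	\bigl| |a+b|^q - |a|^q - |b|^q \bigr| \le \varepsilon |a|^q + C_\varepsilon |b|^q \quad \text{for all } a, b \in \C.
\]
This follows from the fundamental theorem of calculus applied to $t \mapsto |a + tb|^q$, which gives $\bigl||a+b|^q - |a|^q\bigr| \le q |b|(|a|+|b|)^{q-1}$, followed by Young's inequality with exponents $q$ and $q/(q-1)$ (or equivalently a case split according to whether $|b| \le \delta |a|$ or $|b| > \delta |a|$ for a small parameter $\delta$ depending on $\varepsilon$).

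With this inequality in hand, I apply it with $a = f_n - f$ and $b = f$, and define the truncated error
\[
	W_n^\varepsilon(x) := \Bigl( \bigl| |f_n(x)|^q - |f_n(x) - f(x)|^q - |f(x)|^q \bigr| - \varepsilon |f_n(x) - f(x)|^q \Bigr)_+.
\]
Then $0 \le W_n^\varepsilon \le C_\varepsilon |f|^q$, which lies in $L^1(d\mu)$ by the first step and is independent of $n$; moreover $W_n^\varepsilon(x) \to 0$ pointwise $\mu$-a.e.\ because $f_n \to f$ pointwise. Lebesgue's dominated convergence theorem therefore gives $\int W_n^\varepsilon \, d\mu \to 0$ as $n \to \infty$. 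Combined with the bound
\[
	\int \bigl| |f_n|^q - |f_n - f|^q - |f|^q \bigr| \, d\mu
	\le \int W_n^\varepsilon \, d\mu + \varepsilon \|f_n - f\|_{L^q(d\mu)}^q,
\]
and the fact that $\|f_n - f\|_{L^q(d\mu)}^q \le 2^{q-1}(\|f_n\|_{L^q}^q + \|f\|_{L^q}^q)$ is uniformly bounded in $n$, taking $\limsup_{n\to\infty}$ and then $\varepsilon \downarrow 0$ yields exactly \eqref{decomposition}.

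The main obstacle is the verification of the pointwise inequality above: although entirely elementary, one must carefully handle the non-smoothness of $|\cdot|^q$ at the origin (especially when $1 \le q < 2$, where $|\cdot|^q$ is not $C^2$) and separate the regime in which $|b|$ is small compared to $|a|$ from the regime in which it is comparable or larger. Once that inequality is secured, the remainder reduces to a routine application of Fatou's lemma and dominated convergence, with no further subtleties coming from the generality of the measure $\mu$.
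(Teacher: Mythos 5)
Your proof is correct. The paper does not prove Lemma \ref{lem:WBL} at all --- it simply cites \cite{BreLie1983} --- and your argument is precisely the classical Brezis--Lieb proof from that reference: Fatou's lemma for $f\in L^q(d\mu)$, the elementary inequality $\bigl||a+b|^q-|a|^q-|b|^q\bigr|\le \varepsilon|a|^q+C_\varepsilon|b|^q$ applied with $a=f_n-f$, $b=f$, and dominated convergence for the truncated error $W_n^\varepsilon$, followed by $\limsup_{n\to\infty}$ and $\varepsilon\downarrow 0$. All steps, including the uniform bound on $\|f_n-f\|_{L^q(d\mu)}^q$ and the validity of the argument for a general measure $\mu$, check out.
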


\begin{proof}[Proof of \eqref{profile_6}]
We shall prove \eqref{profile_6} by induction. 
The decomposition \eqref{profile_0} with $J=1$ 
is written as 
\[
	(\lambda_n^1)^{\frac{d-2}{2}} \phi_n(\lambda_n^1x+x_n^1) = \psi^1(x) + (\lambda_n^1)^{\frac{d-2}{2}} w_n^1(\lambda_n^1x+x_n^1).
\]
Then, we apply Lemma \ref{lem:WBL} with a Radon measure $\mu$ defined by 
\[
	\mu(A) := \int_A |x|^{-\gamma}\, dx,\quad A\subset \R^d,
\]
and with $f = \psi^1$ and $f_n =(\lambda_n^1)^{\frac{d-2}{2}} \phi_n(\lambda_n^1x+x_n^1)$ 
to obtain
\[
	\lim_{n\to \infty}
	\big\{ \|(\lambda_n^1)^{\frac{d-2}{2}} \phi_n(\lambda_n^1x+x_n^1)\|_{L^{2^*(\gamma)}_\gamma}^{2^*(\gamma)} 
	- 
	\|(\lambda_n^1)^{\frac{d-2}{2}} w_n^1(\lambda_n^1x+x_n^1)\|_{L^{2^*(\gamma)}_\gamma}^{2^*(\gamma)} 
	\big\} = \|\psi^1\|_{L^{2^*(\gamma)}_\gamma}^{2^*(\gamma)}.
\]
This is equivalent to \eqref{profile_6} with $J=1$. Thus, the case $J=1$ is proved. 

Next, we suppose that \eqref{profile_6} holds up to $J$ ($\ge2$). 
By two decompositions \eqref{profile_0} with $J$ and $J+1$, we have
\[
	w_n^J(x) 
	= \frac{1}{(\lambda_n^{J+1})^{\frac{d-2}{2}}} \psi^{J+1}\left( \frac{x-x_n^{J+1}}{\lambda_n^{J+1}}\right) + w_n^{J+1}(x),
\]
which is written as 
\[
	(\lambda_n^{J+1})^{\frac{d-2}{2}} w_n^J (\lambda_n^{J+1}x+x_n^{J+1}) 
	= \psi^{J+1}(x) + (\lambda_n^{J+1})^{\frac{d-2}{2}} w_n^{J+1}(\lambda_n^{J+1}x+x_n^{J+1}). 
\]
Then, again applying Lemma \ref{lem:WBL} with $f=\psi^{J+1}$ and $f_n = (\lambda_n^{J+1})^{\frac{d-2}{2}} w_n^J (\lambda_n^{J+1}x+x_n^{J+1})$, we obtain
\[
\begin{split}
	\lim_{n\to \infty}
	\big\{ \| (\lambda_n^{J+1})^{\frac{d-2}{2}}  w_n^J (\lambda_n^{J+1}x&+x_n^{J+1}) \|_{L^{2^*(\gamma)}_\gamma}^{2^*(\gamma)} \\
	&- 
	\|w_n^{J+1}(\lambda_n^{J+1}x+x_n^{J+1})\|_{L^{2^*(\gamma)}_\gamma}^{2^*(\gamma)} 
	\big\} = \|\psi^{J+1}\|_{L^{2^*(\gamma)}_\gamma}^{2^*(\gamma)}.
\end{split}
\]
By combining this convergence and \eqref{profile_6} with $J$, we prove \eqref{profile_6} with $J+1$. 
By induction, we conclude \eqref{profile_6} for any $1\le J \le J^*$.
This completes the proof of Proposition~\ref{prop:profile}. 
\end{proof}

\section{Proof of Theorem \ref{thm:GD}}\label{sec:3}
In this section, we prove Theorem \ref{thm:GD}. 
We only have to consider the case $E_\gamma(u_0)<l_{HS}$, 
because the other case where $E_\gamma(u_0)=l_{HS}$ and $J_\gamma(u_0)\ne 0$ is reduced to 
this case 
by \eqref{energy-id} and \eqref{energy-ineq}. 
In addition, 
by Proposition \ref{prop:energy-id} and Lemma \ref{lem:invariant}, we may assume that the solution $u=u(t)$ to \eqref{crtHS} with $u(0)=u_0$ satisfies the energy identity
\begin{equation}\label{energy-id2}
	E_\gamma(u(t))+ \int_{0}^t\int_{\R^d}|\partial_t u(\tau,x)|^2\, dx d\tau = E_\gamma(u_0)
\end{equation}
for any $t \in [0,T_m)$ without loss of generality.

\subsection{Proof of the dissipation part (i)}\label{sub:3.1}
In this subsection, we give a proof of (i) in Theorem~\ref{thm:GD}. 
Let us introduce a subset $\mathcal{M}_E^+\subset \dot{H}^1(\mathbb R^d)$ defined by 
\[
	\mathcal M_E^+ := \left\{\phi \in \dot{H}^1(\mathbb R^d) \,;\, 
	E_\gamma(\phi) < E, J_\gamma(\phi)\ge0\right\}, \quad E\in \R, 
\]
and a critical energy $E^c$ given by 
\begin{equation}\label{def:Ec}
\begin{split}
E^c 
:= \sup\big\{ E \in \R \,;\, &\ \text{$T_m(u_0)=+\infty$ and $\|u\|_{\mathcal K^q}<\infty$}\\
& \text{ for any solution $u$ to \eqref{crtHS} with $u_0\in \mathcal M_E^+$}
\big\}.
\end{split}
\end{equation}
Note that all solutions to \eqref{crtHS} with initial data in $\mathcal M_E^+$ are dissipative if $E < E^c$ by (vi) in Proposition \ref{prop:wellposed1}.
It follows that (i) in Theorem \ref{thm:GD} is equivalent to $E^c \ge l_{HS}$. 
Hence, it suffices to prove that $E^c \ge l_{HS}$ by contradiction. 
To this end, we suppose that 
\begin{equation}\label{assum}
	E^c < l_{HS},
\end{equation}
and then aim at deducing $E^c=0$. 
This is a contradiction, as $E^c>0$ by the small-data global existence.

Let us concentrate on proving $E^c=0$ under the assumption \eqref{assum}. 
We take a sequence $\{\phi_n\}_{n=1}^\infty \subset \mathcal M^+$ to attain 
$E^c$ from above, such that 
\begin{equation}\label{cri-ele}
	\text{$E_\gamma(\phi_n) \searrow E^c$ as $n\to \infty$}
	\quad \text{and}\quad 
	\text{$\|u_n\|_{\mathcal K^q(T_{m}(\phi_n))}=\infty$ for $n=1,2,\cdots$},
\end{equation}
where $u_n$ is a solution to \eqref{crtHS} with $u_n(0)=\phi_n$. 
The following is the key decomposition of $\{\phi_n\}_{n=1}^\infty$ with 
a single profile $\psi$, based on the linear profile decomposition 
(see Proposition \ref{prop:profile}).

\begin{lem}\label{lem:singleprofile}
Suppose \eqref{assum}. Let $\{\phi_n\}_{n=1}^\infty$ be the above sequence. Then, 
\begin{equation}\label{decom:J=1}
\phi_n = \psi_n  + w_n,\quad \psi_n (x):=\frac{1}{(\lambda_n)^{\frac{d-2}{2}}}\psi\left( \frac{x}{\lambda_n}\right)
\end{equation}
with scale parameters $\{\lambda_n\}_{n=1}^\infty \subset (0,\infty)$, where $\psi_n, w_n \in \mathcal M^+$ and 
\begin{equation}\label{w_n}
\lim_{n\to \infty} E_\gamma(w_n) = \lim_{n\to \infty} \|w_n\|_{\dot{H}^1} = 0.
\end{equation}
\end{lem}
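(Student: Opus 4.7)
The plan is to apply the linear profile decomposition of Proposition~\ref{prop:profile} to the minimizing sequence $\{\phi_n\}$, then use a pigeonhole argument to show that exactly one profile carries all of the critical energy $E^c$ and that this profile must be centered at the origin.

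First, I would invoke Proposition~\ref{prop:profile} to obtain profiles $\psi^j$, parameters $(\lambda_n^j,x_n^j)$ satisfying \eqref{profile_3}--\eqref{profile_4}, and remainders $w_n^J$. Since $\phi_n\in\mathcal{M}^+$ with $E_\gamma(\phi_n)\searrow E^c<l_{HS}$, the decoupling identities \eqref{decop-E-phi_n}--\eqref{decop-J-phi_n} place us in the setting of Lemma~\ref{lem:EJ} (with the $f_j$'s taken to be the rescaled, translated profiles $\psi_n^j$ and $f_{J+1}=w_n^J$). This yields, for $n$ large, $E_\gamma(\psi^j)\in[0,l_{HS})$, $J_\gamma(\psi^j)\ge 0$, and likewise $E_\gamma(w_n^J)\in[0,l_{HS})$, $J_\gamma(w_n^J)\ge 0$, so every profile and every remainder belongs to $\mathcal{M}^+$.

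Next, I would argue by contradiction that some profile $\psi^{j_0}$ satisfies $E_\gamma(\psi^{j_0})\ge E^c$. Suppose to the contrary that $E_\gamma(\psi^j)<E^c$ for all $j$. For profiles with $x_n^j\equiv 0$, the scaling invariance of \eqref{crtHS} together with the definition \eqref{def:Ec} of $E^c$ yields a global nonlinear profile $U_n^j$ with $\mathcal{K}^q$ norm bounded uniformly in $n$. For profiles with $|x_n^j|\to\infty$ and $|x_n^j|/\lambda_n^j\to\infty$, Lemma~\ref{lem:key1} is the crucial device: the potential $|x|^{-\gamma}$ is effectively negligible on the region where the rescaled profile concentrates, so the nonlinear profile is well-approximated by the free heat flow of $\psi_n^j$, whose $\mathcal{K}^q$ norm is controlled in terms of $\|\psi^j\|_{\dot H^1}$ alone. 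Summing the nonlinear profiles, using the asymptotic orthogonality \eqref{profile_4} to kill cross terms, and treating the remaining error as a perturbation, Proposition~\ref{prop:perturbation} produces a global approximate solution close to $u_n$, which forces $\|u_n\|_{\mathcal{K}^q(T_m(\phi_n))}<\infty$ and contradicts \eqref{cri-ele}.

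Hence some $\psi^{j_0}$ has $E_\gamma(\psi^{j_0})\ge E^c$. Because every profile and remainder has nonnegative energy, \eqref{decop-E-phi_n} and $E_\gamma(\phi_n)\to E^c$ force $j_0$ to be unique with $E_\gamma(\psi^{j_0})=E^c$, while $E_\gamma(\psi^j)=0$ for all $j\ne j_0$ and $\lim_{n\to\infty}E_\gamma(w_n^{J_0})=0$ for $J_0$ large. Combined with $J_\gamma\ge 0$, vanishing energy forces $\psi^j\equiv 0$ for $j\ne j_0$ and, via \eqref{profile_5}, $\|w_n^{J_0}\|_{\dot H^1}\to 0$. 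Moreover, $x_n^{j_0}\equiv 0$: if instead $|x_n^{j_0}|,|x_n^{j_0}|/\lambda_n^{j_0}\to\infty$, Lemma~\ref{lem:key1} would furnish a global bounded nonlinear profile for $\psi^{j_0}$, and the same perturbation argument would contradict \eqref{cri-ele}. Setting $\psi:=\psi^{j_0}$, $\lambda_n:=\lambda_n^{j_0}$, and $w_n:=w_n^{J_0}$ yields \eqref{decom:J=1} together with \eqref{w_n}. The main obstacle is precisely the treatment of profiles with translation parameters escaping to infinity: the singular Hardy potential breaks the translation invariance of \eqref{crtHS}, so the standard Kenig--Merle trick of changing reference frames is unavailable. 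Lemma~\ref{lem:key1} resolves this by exploiting the decay of $|x|^{-\gamma}$ at infinity to reduce the nonlinear dynamics of escaping profiles to the free heat flow, and the same mechanism forces the unique critical profile to be centered at the origin, which is the content of Lemma~\ref{lem:singleprofile}.
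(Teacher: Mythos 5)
Your proposal is correct and follows essentially the same route as the paper: profile decomposition, Lemma~\ref{lem:EJ} to place all profiles and remainders in $\mathcal M^+$, nonlinear profiles for the origin-centered pieces and Lemma~\ref{lem:key1} for the escaping ones, the perturbation result of Proposition~\ref{prop:perturbation}, and a pigeonhole on the decoupled energies; your contradiction hypothesis ($E_\gamma(\psi^j)<E^c$ for all $j$) is equivalent, via the definition \eqref{def:Ec} of $E^c$, to the paper's hypothesis that every origin-centered nonlinear profile has finite $\mathcal K^q$ norm. The one detail you gloss over is the uniform summability of the nonlinear profiles when $J^*=\infty$, which the paper secures by applying small-data theory to the tail profiles $j\ge J'$, whose $\dot H^1$ norms are small by the kinetic-energy decoupling \eqref{profile_5}.
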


Supposing this lemma holds, we now complete the proof of $E^c=0$ 
by using this lemma. 
Let $v^c=v^c(t,x)$ be a solution to \eqref{crtHS} with $v^c(0) = \psi$, where $\psi$ is the profile in the decomposition \eqref{decom:J=1}.
Then, it follows from \eqref{w_n} and the energy inequality \eqref{energy-ineq} that 
\[
E^c = \lim_{n\to \infty}E_\gamma(\phi_n) = \lim_{n\to \infty} E_\gamma(\psi_n) 
	= E_\gamma(\psi) \ge E_\gamma(v^c(t)), 
	\quad t \in [0,T_{m}(\psi)).
\]
On the other hand, we see that there exists $t_0 \in (0, T_m(\psi))$ such that 
\begin{equation}\label{E^c-ineq}
	E^c \le E_\gamma(v^c(t_0))
\end{equation}
by contradiction.
In fact, we suppose that $E^c > E_\gamma(v^c(t))$ for any $t \in (0,T_{m}(\psi))$. 
Then, $T_m(\psi)=+\infty$ and $\|v^c\|_{\mathcal K^q}<\infty$ by the definition of $E^c$. 
We denote by $v_n^c$ the scaled function of $v^c$ such that 
\[
v_n^c (t,x) := \frac{1}{(\lambda_n)^{\frac{d-2}{2}}} v^c\left(\frac{t}{(\lambda_n)^{2}}, \frac{x}{\lambda_n}\right).
\]
Then, $v_n^c$ is a solution to \eqref{crtHS} with $v_n^c(0) = \psi_n$ and satisfies $\|v^c_n\|_{\mathcal K^q}<\infty$ for any $n\in\mathbb N$. 
Combining the perturbation result with \eqref{w_n}, we also have $T_{m}(\phi_n)=+\infty$ and $\|u_n\|_{\mathcal K^q}<\infty$ 
for a sufficiently large $n$. This contradicts \eqref{cri-ele}. 
Thus, there exists $t_0 \in (0, T_m(\psi))$ such that \eqref{E^c-ineq} holds.
Summarizing what has been obtained so far, we find that 
\[
	E^c = E_\gamma(v^c(t_0)).
\] 
This means that $v^c$ is a stationary solution by the energy identity. 
However, by \eqref{assum}, $v^c$ must be the zero solution. This proves $E^c = 0$.
Therefore, by contradiction, \eqref{assum} is negated. 
Thus we conclude that $E^c \ge l_{HS}$.\\

The rest of this subsection is devoted to the proof of Lemma \ref{lem:singleprofile}. 
For this purpose, we prepare the following two lemmas.
\begin{lem}\label{lem:key1}
Let $d\ge3$ and $0<\gamma<2$, and let $v_n$ be a solution to the linear heat equation with initial data 
\[
v_{n}(0) = \lambda_n^{-\frac{d-2}{2}} u_0\left(\frac{x-x_n}{\lambda_n} \right),\quad u_0 \in \dot H^1(\mathbb R^d),
\]
where $\lambda_n \in (0,\infty)$ and $x_n \in \mathbb R^d$. 
Assume that $|x_n| \to + \infty$ and $|x_n/\lambda_n|\to + \infty$ as $n\to\infty$. 
Then, 
\begin{equation}\label{error-linear}
\lim_{n\to \infty}\left\|\int_0^t e^{(t-\tau)\Delta}\{|x|^{-\gamma} |v_n(\tau)|^{2^*(\gamma)-2}v_n(\tau)\}\,d\tau\right\|_{\mathcal K^q} = 0
\end{equation}
for $q$ satisfying the assumptions of Proposition \ref{prop:wellposed1}. 
\end{lem}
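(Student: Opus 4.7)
The plan is to use the scale and translation invariance of the $\mathcal K^q$-norm to reduce the assertion to a statement about a fixed heat profile whose spatial support is pushed to infinity, and then to exploit that in the rescaled frame the profile is concentrated far from the origin, where the singular weight $|x|^{-\gamma}$ lives.

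Writing $V(s,y):=(e^{s\Delta}u_0)(y)$, the scaling of the heat equation gives $v_n(\tau,x)=\lambda_n^{-(d-2)/2}V(\tau/\lambda_n^2,(x-x_n)/\lambda_n)$. Performing the change of variables $\tau=\lambda_n^2 s$, $x'=\lambda_n y+x_n$ inside the Duhamel integral of \eqref{error-linear}, followed by a further translation $y\mapsto y+a_n$ with $a_n:=x_n/\lambda_n$, shows that, up to the intrinsic parabolic scaling and a translation by $x_n$, the integral in \eqref{error-linear} equals
\[
\Phi_n(r,z):=\int_0^r e^{(r-s)\Delta}\bigl(|\cdot|^{-\gamma}\,|W_n(s)|^{2^*(\gamma)-2}W_n(s)\bigr)(z)\,ds,
\]
where $W_n(s,y):=V(s,y-a_n)$. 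Since the $\mathcal K^q$-norm is invariant under the parabolic scaling $u(t,x)\mapsto\lambda^{(d-2)/2}u(\lambda^2 t,\lambda x)$ and under spatial translations, it is enough to prove $\|\Phi_n\|_{\mathcal K^q}\to 0$. The hypothesis $|x_n|/\lambda_n\to\infty$ becomes $|a_n|\to\infty$, and $W_n(s,\cdot)$ is accordingly concentrated near $y=a_n$, far from the origin.

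A density argument based on the Lipschitz-type stability given by Proposition~\ref{prop:wellposed1} reduces the problem to the case $u_0\in C_c^\infty(\R^d)$ with $\mathrm{supp}\,u_0\subset\{|y|\le R_0\}$. For such data one has the Gaussian bound $|W_n(s,y)|\le Cs^{-d/2}e^{-|y-a_n|^2/(8s)}$ for $|y-a_n|\ge 2R_0$, together with the universal decay $\|W_n(s)\|_{L^q}\le Cs^{-\alpha}$ with $\alpha=\frac{d}{2}\bigl(\frac{1}{q_c}-\frac{1}{q}\bigr)>0$. I would then split the integrand of $\Phi_n$ using the indicator functions of the two regions $\{|y|>|a_n|/2\}$ and $\{|y|\le|a_n|/2\}$. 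On the first region, writing $|y|^{-\gamma}\le C|a_n|^{-\gamma\theta}|y|^{-\gamma(1-\theta)}$ for some small $\theta\in(0,1)$ and invoking Lemma~\ref{l:lin.est.HS}(ii) with exponent $\gamma(1-\theta)$ in place of $\gamma$, the standard nonlinear estimate yields a contribution of size $O(|a_n|^{-\gamma\theta})\|W_n\|_{\mathcal K^q}^{2^*(\gamma)-1}\to 0$. On the second region, one has $|y-a_n|\ge|a_n|/2$; splitting the time integral at $s=|a_n|^2$ to combine the Gaussian bound for short times with the $L^q$-scaling bound for long times, Lemma~\ref{l:lin.est.HS}(ii) then gives a vanishing contribution.

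The main obstacle is the interior region $\{|y|\le|a_n|/2\}$: there the weight $|\cdot|^{-\gamma}$ is unbounded, and the only available smallness comes from the pointwise decay of $W_n$ away from its centre $a_n$. Balancing these two effects while keeping the auxiliary Lebesgue exponents in the range required by Lemma~\ref{l:lin.est.HS}(ii) and by \eqref{l:crtHS.nonlin.est:c2} is the delicate point, and it is precisely here that the hypothesis $|x_n|/\lambda_n\to\infty$ is used in a quantitative way.
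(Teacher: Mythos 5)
Your overall strategy -- rescale and translate so that the profile sits at distance $|a_n|=|x_n|/\lambda_n\to\infty$ from the singularity, approximate by compactly supported objects, and extract smallness from the weight -- is the same as the paper's, but two steps as written do not close.

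First, your estimate on the exterior region $\{|y|>|a_n|/2\}$ cannot hold in the form you state. Replacing a $\theta$-fraction of the weight $|y|^{-\gamma}$ by the constant $|a_n|^{-\gamma\theta}$ destroys the exact criticality that makes the Duhamel estimate close in $\mathcal K^q$: the time integral then produces an uncompensated factor $t^{\gamma\theta/2}$, so the bound is $O\bigl(|a_n|^{-\gamma\theta}\,t^{\gamma\theta/2}\bigr)$, which is unbounded on $(0,\infty)$. (Equivalently: the claimed inequality $\|\Phi_n\|_{\mathcal K^q}\le C|a_n|^{-\gamma\theta}\|W_n\|_{\mathcal K^q}^{2^*(\gamma)-1}$ is not invariant under the parabolic scaling that leaves both $\mathcal K^q$-norms fixed but rescales $a_n$, so it cannot hold with a universal constant.) The paper's proof faces exactly the same factor ($t^{\gamma/2}$ in its computation) and therefore \emph{begins} by reducing the assertion to $\mathcal K^q(T)$ for each finite $T$, via the observation that $t\mapsto t^{\frac d2(\frac1{q_c}-\frac1q)}\|\int_0^t\cdots\|_{L^q}$ attains its supremum and tends to $0$ at temporal infinity. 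You need this reduction, and once you restore it you will find that the surviving smallness factor is $|x_n|^{-\gamma\theta}T^{\gamma\theta/2}$, not $|a_n|^{-\gamma\theta}$ -- i.e.\ the hypothesis $|x_n|\to\infty$ (which your argument never invokes) is the one actually doing the work; $|x_n/\lambda_n|\to\infty$ alone is used only to localize the profile away from the origin.

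Second, truncating the \emph{initial datum} to $\{|y|\le R_0\}$ does not control the interior region $\{|y|\le|a_n|/2\}$ for large times: once $s\gtrsim|a_n|^2$ the heat flow has spread $W_n(s)$ over a ball of radius $\sqrt s\gg|a_n|$, the Gaussian bound is useless there, and the plain decay $\|W_n(s)\|_{L^q}\le Cs^{-\alpha}$ carries no smallness in $n$. You flag this yourself as "the delicate point", but the proposed split at $s=|a_n|^2$ does not resolve it. The paper sidesteps the issue by truncating the \emph{linear solution in space uniformly in time}, $\tilde v_k(t,x)=\chi_k(x)\,e^{t\Delta}u_0(x)$, so the approximant is supported in a fixed compact set for all $t$; the error $\|v-\tilde v_k\|_{\mathcal K^q(\infty)}\to0$ is then handled once by dominated convergence together with the dissipation of the linear flow, and no interior/long-time regime ever arises. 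I recommend adopting that truncation; with it and the finite-$T$ reduction in place, your exterior estimate (in its corrected, $T$-dependent form) is all that remains.
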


\begin{rem}
Lemma \ref{lem:key1} and Proposition \ref{prop:perturbation} imply that the solution to \eqref{crtHS} with initial data $v_{n}(0)$ 
converges to the linear solution $v_n$ as $t\to \infty$ (i.e., it is dissipative), if $n$ is sufficiently large, although $\|v_{n}(0)\|_{\dot H^1} = \|u_0\|_{\dot H^1}$ for all $n\in\mathbb N$.
This phenomenon is unique to equations such as \eqref{crtHS} and is caused by $|x|^{-\gamma}$. 
In fact, this phenomenon does not occur in case $\gamma=0$, 
as \eqref{crtHS} with $\gamma=0$ is invariant under the translation with respect to $x$.
\end{rem}

\begin{lem}\label{lem:key2}
Let $d\ge3$ and $0<\gamma<2$, and let $u^j$ be a solution to the linear heat equation or \eqref{crtHS} with $u^j(0)=u^j_0 \in \dot H^1(\R^d)$ and $\|u^j\|_{\mathcal K^q} < \infty$ for $j=1,2$.
For $j=1,2$, define 
\begin{equation}\label{sc-tr}
u_n^j(t,x) 
:=
\frac{1}{(\lambda_n^j)^{\frac{d-2}{2}}} 
u^j\left( \frac{t}{(\lambda_n^j)^2}, \frac{x-x_n^j}{\lambda_n^j}\right)
\end{equation}
with parameters $\{\lambda_n^j\}_{n=1}^{\infty} \subset (0,\infty)$ and $\{x_n^j\}_{n=1}^{\infty} \subset \mathbb R^d$.
Assume that 
\begin{equation}\label{aym-orth}
\frac{\lambda_n^1}{\lambda_n^2} + \frac{\lambda_n^2}{\lambda_n^1} + \frac{|x_n^2 - x_n^1|^2}{\lambda_n^1\lambda_n^2}
\to + \infty
\end{equation}
as $n\to \infty$. 
Then,
\begin{equation}\label{eq.key}
\lim_{n\to \infty} \sup_{t\in (0,\infty)}t^{\frac{d(2^*(\gamma)-1)}{2}(\frac{1}{q_c}-\frac{1}{q})}
\big\||u_n^1(t)|^{2^*(\gamma)-2}|u_n^2(t)|\big\|_{L^{\frac{q}{2^*(\gamma)-1}}} = 0
\end{equation}
for $q$ satisfying the assumptions of Proposition \ref{prop:wellposed1}. 
\end{lem}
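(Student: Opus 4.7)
The plan is to reduce via a density argument to the case where both $u^1$ and $u^2$ are smooth and compactly supported in $(0,\infty)\times\R^d$, and then exploit the asymptotic orthogonality to show that the space--time supports of $u_n^1$ and $u_n^2$ become disjoint for all large $n$, making the quantity in \eqref{eq.key} identically zero. The reduction begins with H\"older's inequality in $x$ with exponents $q/(2^*(\gamma)-2)$ and $q$, which yields
\[
\bigl\||u_n^1(t)|^{2^*(\gamma)-2}|u_n^2(t)|\bigr\|_{L^{q/(2^*(\gamma)-1)}} \le \|u_n^1(t)\|_{L^q}^{2^*(\gamma)-2}\|u_n^2(t)\|_{L^q}.
\]
A change of variables in \eqref{sc-tr} shows that the Kato norm is invariant under scaling--translation, namely $\|u_n^j\|_{\mathcal K^q}=\|u^j\|_{\mathcal K^q}<\infty$ for every $n$; combined with the multilinear structure above, this makes the expression in \eqref{eq.key} jointly continuous in $(u^1,u^2)$ for the $\mathcal K^q$ topology, uniformly in $n$. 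Hence it suffices to prove the conclusion on a dense subclass.

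We take this dense subclass to be $C_c^\infty((0,\infty)\times\R^d)$. Density in the $\mathcal K^q$ norm is obtained by multiplying $u^j$ by a smooth space--time cutoff supported in $[\varepsilon,1/\varepsilon]\times B(0,1/\varepsilon)$ and mollifying; the error is controlled using that $t^{\alpha_0}\|u^j(t)\|_{L^q}\to 0$ as $t\downarrow 0$ and as $t\to\infty$, where $\alpha_0:=\frac{d}{2}(\frac{1}{q_c}-\frac{1}{q})$. These vanishings hold whether $u^j$ is a linear heat extension of a $\dot H^1$ datum (by standard smoothing of $e^{t\Delta}$ together with density of Schwartz data) or a nonlinear solution with $\|u^j\|_{\mathcal K^q}<\infty$ (by part (vi) of Proposition~\ref{prop:wellposed1}). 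Assume now that $u^j$ is supported in $[\tau_1^j,\tau_2^j]\times B(0,R^j)$ with $\tau_1^j>0$. Then $u_n^j(t,x)$ is supported in
\[
A_n^j := \bigl[(\lambda_n^j)^2\tau_1^j,\,(\lambda_n^j)^2\tau_2^j\bigr]\times B(x_n^j,\,R^j\lambda_n^j),
\]
so \eqref{eq.key} follows at once from the claim that $A_n^1\cap A_n^2=\emptyset$ for all sufficiently large $n$.

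To verify this claim, pass to a subsequence along which each of $\lambda_n^1/\lambda_n^2$, $\lambda_n^2/\lambda_n^1$, and $|x_n^1-x_n^2|^2/(\lambda_n^1\lambda_n^2)$ admits a limit in $[0,\infty]$; by \eqref{aym-orth}, at least one of these limits equals $+\infty$. If $\lambda_n^1/\lambda_n^2\to\infty$ or $\lambda_n^2/\lambda_n^1\to\infty$, the two time intervals in $A_n^j$ lie at incomparable scales and must disjointify for large $n$. Otherwise $\lambda_n^1\asymp\lambda_n^2$, in which case $|x_n^1-x_n^2|^2/(\lambda_n^1\lambda_n^2)\to\infty$ forces $|x_n^1-x_n^2|/\max(\lambda_n^1,\lambda_n^2)\to\infty$ and the spatial balls $B(x_n^j,R^j\lambda_n^j)$ become disjoint. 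The main obstacle in this scheme is the density step: the $\mathcal K^q$ norm is a weighted endpoint norm for which approximation by $C_c^\infty((0,\infty)\times\R^d)$ is not automatic, and establishing the vanishings of $t^{\alpha_0}\|u^j(t)\|_{L^q}$ at the two endpoints relies on the structure of the underlying equation via Proposition~\ref{prop:wellposed1}. Once this is available, the orthogonality argument above is straightforward.
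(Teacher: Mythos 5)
Your argument is correct and reaches the same conclusion, but it diverges from the paper's proof in how the scaling-separated case is handled. The paper first reduces to data in $\dot H^1(\R^d)\cap L^\infty(\R^d)$ and truncates $u^j$ only in space and at large times (its cutoff $\eta_k$ equals $1$ on $[0,k]$, so nothing is removed near $t=0$); consequently, when $\lambda_n^1/\lambda_n^2\to 0$ or $\infty$ it cannot argue by disjoint supports and instead extracts the Jacobian prefactor $(\lambda_n^1/\lambda_n^2)^{\frac{d-2}{2}}\to 0$ from the change of variables, bounding the remaining integral by $L^\infty$ norms of the truncations times $|\supp \chi_k|$. You instead truncate in time at both ends, which converts the scaling case into the same support-disjointness argument as the translation case and removes any need for $L^\infty$ control; this is a cleaner unification. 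The cost is that it shifts the burden onto the density step: you must know that $t^{\frac{d}{2}(\frac{1}{q_c}-\frac{1}{q})}\|u^j(t)\|_{L^q}\to 0$ as $t\downarrow 0$. Your citation of (vi) in Proposition \ref{prop:wellposed1} covers only the limit $t\to T_m=+\infty$; the vanishing at $t\downarrow 0$ is not stated there. It is nevertheless true: for the linear flow it follows by density as you say, and for a nonlinear solution it follows from the fixed-point construction together with uniqueness in $\mathcal K^q$, since the contraction estimate gives $\|u\|_{\mathcal K^q(T)}\le 2\|e^{t\Delta}u_0\|_{\mathcal K^q(T)}\to 0$ as $T\to 0$. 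You should supply this short argument rather than lean on (vi). With that repaired, the rest of your scheme --- the H\"older/multilinearity reduction, the scale--translation invariance of $\|\cdot\|_{\mathcal K^q}$ which makes the approximation uniform in $n$, and the subsequence trichotomy, which crucially uses $\tau_1^j>0$ to disjointify the time intervals when the scales decouple --- is sound and matches the paper's treatment of the translation case.
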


Let us now prove Lemma \ref{lem:singleprofile} by using Proposition \ref{prop:profile} and these lemmas.

\begin{proof}[Proof of Lemma \ref{lem:singleprofile}]
Let $\{\phi_n\}_{n=1}^\infty \subset \mathcal M^+$ be a sequence 
for attaining $E^c$ from above, such that \eqref{cri-ele} holds.
By Proposition \ref{prop:profile}, there exist $J^* \in \{1,2,\ldots,\infty\}$, 
$\{\psi^j\}_{j=1}^{J^*} \subset \dot{H}^1(\mathbb R^d)$, 
$\{\lambda_n^j\}_{j=1}^{J^*} \subset (0,\infty)$, and 
$\{x_n^j\}_{j=1}^{J^*} \subset \mathbb R^d$ such that for $1\le J\le J^*$
\begin{equation}\label{profile1}
	\phi_n(x) = \sum_{j=1}^{J} \psi_n^j(x) + w_n^J(x)
\end{equation}
and \eqref{profile_1}--\eqref{profile_6} hold, where $\psi_n^j$ is defined by 
\begin{equation}\label{psi^j_n}
\psi_n^j(x)
:=
\frac{1}{(\lambda_n^j)^{\frac{d-2}{2}}} \psi^j\left( \frac{x-x_n^j}{\lambda_n^j}\right).
\end{equation}
We discuss only the case $J^*=\infty$, as the case $J^*<\infty$ is similar.
Note that, from \eqref{profile_5}, 
\begin{equation}\label{series-psi^j}
	\sum_{j=1}^\infty\| \psi^{j} \|_{\dot{H}^1} 
	\le  \|\phi_n\|_{\dot{H}^1} + o_n(1) 
	\le C(1 + E^c)
\end{equation}
for a sufficiently large $n$. Moreover, owing to Lemma \ref{lem:EJ}, it follows from 
\eqref{decop-E-phi_n}, \eqref{decop-J-phi_n}, and \eqref{assum} that 
\begin{equation}\label{psi-w-M}
	\psi^j, w_n^J \in \mathcal M^+
	\quad \text{for any $1\le j\le J$ and $n\in\mathbb N$}.
\end{equation}
Taking \eqref{profile_3} into account, we define
\[
\mathcal J_1 :=\left\{ j \in\mathbb N \ ;\,  x_n^j = 0 \text{ for any $n\in\mathbb N$}\right\},
\]
\[
\mathcal J_2 := \left\{ j \in\mathbb N \ ;\,  |x_n^j|\to\infty \text{ and }\frac{|x_n^j|}{\lambda_n^j} \to +\infty \text{ as }n\to \infty\right\}.
\]
We consider the case where $\mathcal J_1$ and $\mathcal J_2$ are non empty. In the last of this proof, we will explain the other cases. 
Let us define an approximate solution $u_n^J$ of $u_n$ by 
\begin{equation}\label{u_n^J}
u_n^J(t) := \sum_{j=1}^{J} v^j_n(t) + e^{t\Delta}w_n^J,
\end{equation}
where 
$v^j_n$ is a solution to \eqref{crtHS} with initial data $v^j_n(0)=\psi^j_n$ if $j \in \mathcal J_1$, and 
to the linear equation $\partial_t v_n^j - \Delta v_n^j =0$ with $v_n^j(0) = \psi^j_n$ if $j \in \mathcal J_2$.
As to $\mathcal J_2$, we see that  
\[
\sum_{j\in \mathcal J_2}\|v_n^j\|_{\mathcal K^q}
\le 2
C \sum_{j=1}^\infty\| \psi^{j} \|_{\dot{H}^1} 
<\infty.
\]
As to $\mathcal J_1$, we write $v_n^j$ as
\[
v_n^j (t,x) = 
\frac{1}{(\lambda_n^j)^{\frac{d-2}{2}}} v^j\left(\frac{t}{(\lambda_n^j)^{2}}, \frac{x}{\lambda_n^j}\right),\quad j\in\mathcal J_1,
\]
where $v^j$ is a solution to \eqref{crtHS} with initial data $v^j(0)=\psi^j$. 
Since 
$\| \psi^{j} \|_{\dot{H}^1} \to 0$ as $j\to \infty$ by \eqref{series-psi^j}, 
there exists $J' \in \mathbb N$ such that 
\begin{equation}\label{sdge}
\| v^j \|_{\mathcal K^q} \le 2\| \psi^{j} \|_{\dot{H}^1} \quad \text{for any $j \ge J'$.}
\end{equation}
For a contradiction, we assume that 
\begin{equation}\label{contradiction}
\|v^j\|_{\mathcal K^q(T_{m}(\psi^j))} < \infty\quad \text{for any $j\in\mathcal J_1$ with $1 \le j \le J'-1$}.
\end{equation}
Then, $T_{m}(\psi^j)= + \infty$ and 
\begin{equation}\label{series}
\sum_{j\in \mathcal J_1}\|v_n^j\|_{\mathcal K^q} =\sum_{j\in \mathcal J_1} \|v^j\|_{\mathcal K^q} < \infty.
\end{equation}
Hence, we have 
\[
\lim_{J\to\infty}\lim_{n\to\infty}\|u_n^J\|_{\mathcal K^q} <\infty.
\]
Here, we note from (i) in Proposition \ref{prop:wellposed1} that 
\[
\|v_n^j\|_{\mathcal K^{\tilde{q}}_{\tilde{r}} } = \|v^j\|_{\mathcal K^{\tilde{q}}_{\tilde{r}}} < \infty
\quad \text{and}\quad 
\|u_n^J\|_{\mathcal K^{\tilde{q}}_{\tilde{r}} } < \infty
\]
for any $n \in \mathbb N$ and for any $(\tilde{q},\tilde{r})$ satisfying the conditions in Proposition \ref{prop:perturbation}.
For convenience, we use the notations $\tilde{\mathcal J}_1 = \mathcal J_1 \cap \{1,2,\ldots, J\}$ and $\tilde{\mathcal J}_2 = \mathcal J_2 \cap \{1,2,\ldots, J\}$.
Now, $u_n^J$ is a solution to the approximate equation
\[
\begin{cases}
\partial_t u_n^J - \Delta u_n^J = |x|^{-\gamma}|u^J_n|^{2^*(\gamma)-2}u^J_n + e_n^J,\\
u_n^J(0) = \phi_n,
\end{cases}
\]
where 
\[
e_n^J :=
|x|^{-\gamma} \left\{
\sum_{j\in \tilde{\mathcal J}_1} |v^j_n|^{2^*(\gamma)-2}v^j_n - |u^J_n|^{2^*(\gamma)-2}u^J_n \right\}.
\]
To use Proposition \ref{prop:perturbation}, 
we will prove that 
\begin{equation}\label{error-e_n^J}
\lim_{J\to\infty}\limsup_{n\to \infty}\left\|\int_0^t e^{(t-\tau)\Delta}(e_n^J(\tau))\,d\tau\right\|_{\mathcal K^q} = 0.
\end{equation}
We write 
\[
e_n^J = e_{n,1}^J  + e_{n,2}^J  + e_{n,3}^J,
\]
where $e_{n,1}^J$, $e_{n,2}^J$, and $e_{n,3}^J$ are given by 
\[
e_{n,1}^J := |x|^{-\gamma} \left\{\sum_{j=1}^J |v^j_n|^{2^*(\gamma)-2}v^j_n -  \bigg|\sum_{j=1}^{J} v^j_n\bigg|^{2^*(\gamma)-2}\bigg( \sum_{j=1}^{J}v^j_n\bigg)\right\},
\]
\[
e_{n,2}^J := |x|^{-\gamma} \left\{|u^J_n - e^{t\Delta}w_n^J|^{2^*(\gamma)-2}( u^J_n - e^{t\Delta}w_n^J) - |u^J_n|^{2^*(\gamma)-2}u^J_n\right\},
\]
\[
e_{n,3}^J := - |x|^{-\gamma} \sum_{j\in \tilde{\mathcal J}_2} |v^j_n|^{2^*(\gamma)-2}v^j_n.
\]
As to the term $e_{n,1}^{J}$, 
as
\[
|e_{n,1}^{J}|
\le C
|x|^{-\gamma} \sum_{\substack{ 1\le i,j \le J \\ i\not = j}}|v_n^i|^{2^*(\gamma)-2}|v_n^j|,
\]
it follows from Lemma \ref{lem:key2} that 
\[
\lim_{n\to \infty} \sup_{t\in (0,\infty)}t^{\frac{d(2^*(\gamma)-1)}{2}(\frac{1}{q_c}-\frac{1}{q})}
\||v_n^i(t)|^{2^*(\gamma)-2}|v_n^j(t)|\|_{L^{\frac{q}{2^*(\gamma)-1}}} = 0
\]
for any $1\le i,j \le J$ with $i\ne j$.
Hence, we obtain
\begin{equation}\label{error-e_n^J_1}
\limsup_{n\to \infty}\left\|\int_0^t e^{(t-\tau)\Delta}(e_{n,1}^J(\tau))\,d\tau\right\|_{\mathcal K^q} = 0.
\end{equation}
for each $J\ge1$. As to the term $e_{n,2}^{J}$, we estimate
\[
\begin{split}
&\left\|\int_0^t e^{(t-\tau)\Delta}(e_{n,2}^J(\tau))\,d\tau\right\|_{\mathcal K^q}\\
& \le C \big(\|u_n^J\|_{\mathcal K^q}^{2^*(\gamma)-2}\|e^{t\Delta}w_n^J\|_{\mathcal K^q}
+ \|e^{t\Delta}w_n^J\|_{\mathcal K^q}^{2^*(\gamma)-2}
\|u_n^J\|_{\mathcal K^q} + \|e^{t\Delta}w_n^J\|_{\mathcal K^q}^{2^*(\gamma)-1}\big),
\end{split}
\]
where we note that the above constant $C$ is independent of $J$.
Hence, by \eqref{profile_1}, 
we also obtain
\begin{equation}\label{error-e_n^J_2}
\lim_{J\to\infty}\limsup_{n\to \infty}\left\|\int_0^t e^{(t-\tau)\Delta}(e_{n,2}^J(\tau))\,d\tau\right\|_{\mathcal K^q} = 0.
\end{equation}
As to the term $e_{n,3}^{J}$, it follows from Lemma \ref{lem:key1} that 
\begin{equation}\label{error-e_n^J_3}
\limsup_{n\to \infty}\left\|\int_0^t e^{(t-\tau)\Delta}(e_{n,3}^J(\tau))\,d\tau\right\|_{\mathcal K^q} = 0
\end{equation}
for each $J\ge1$. 
Summarizing \eqref{error-e_n^J_1}--\eqref{error-e_n^J_3}, we obtain \eqref{error-e_n^J}. 
Hence, we can apply Proposition \ref{prop:perturbation} to $u_n$ and $u_n^J$, and then we obtain $\|u_n\|_{\mathcal K^q}<\infty$ for a sufficiently large $n$. 
This contradicts \eqref{cri-ele}. Therefore, \eqref{contradiction} is negated and there exists $j_0 \in \mathcal J_1$ such that 
\begin{equation}\label{step2}
\|v^j\|_{\mathcal K^q(T_{m}(\psi^{j_0}))} = \infty.
\end{equation}

Since $E(\psi^j)\ge 0$ and $E(w_n^J)\ge0$ by $\psi^j, w_n^J\in \mathcal M^+$, 
we see that  
\begin{equation}\label{decop-E-phi_n2}
E^c = \lim_{n\to\infty} E(\phi_n) = \sum_{j=1}^J E(\psi^j) + \lim_{n\to\infty} E(w_n^J)
\end{equation}
for any $J\ge1$. Then, $E^c \ge E(\psi^j)$ for any $j \ge1$. 
On the other hand, by \eqref{step2}, we have $E^c \le E(\psi^{j_0})$. 
Hence, 
\[
E^c = E(\psi^{j_0}).
\]
By \eqref{decop-E-phi_n2}, we have $E(\psi^j) = 0$ for any $j \not = j_0$ and  
\[
\lim_{n\to \infty} E(w_n^J) = 0\quad \text{for any $J\ge1$}.
\]
Moreover, as  
\[
\|\psi^j\|_{\dot{H}^1(\mathbb R^d)} \le C E(\psi^j),\quad \|w_n^J\|_{\dot{H}^1(\mathbb R^d)} \le C E(w_n^J)
\]
by $\psi, w_n^J\in \mathcal M^+$, we obtain 
$\psi^j = 0$ for any $j \not = j_0$ and 
\[
\displaystyle \lim_{n\to \infty} \|w_n^J\|_{\dot{H}^1(\mathbb R^d)} = 0\quad \text{for any $J\ge1$}.
\]
Therefore, Lemma \ref{lem:singleprofile} is proved 
in the case where $\mathcal J_1$ and $\mathcal J_2$ are non-empty. 

Finally, we consider the remaining cases. 
In the case where $\mathcal J_2$ is empty, we can perform the same argument as above and prove Lemma \ref{lem:singleprofile}. 
In contrast, the case where $\mathcal J_1$ is empty does not occur. In fact, 
in this case, the error term $e_n^J$ is written as 
\[
e_n^J = 
- |x|^{-\gamma} |u^J_n|^{2^*(\gamma)-2}u^J_n = 
- |x|^{-\gamma}
\left\{
\sum_{j=1}^{J} v^j_n(t) + e^{t\Delta}w_n^J\right\}.
\]
By Lemma \ref{lem:key1} and \eqref{profile_1}, we obtain \eqref{error-e_n^J}. 
Hence we apply Proposition \ref{prop:perturbation} to obtain $\|u_n\|_{\mathcal K^q}<\infty$ for a sufficiently large $n$,
which is a contradiction. Therefore, this case does not occur. Thus, we conclude Lemma \ref{lem:singleprofile}.
\end{proof}

It remains to prove Lemmas \ref{lem:key1} and \ref{lem:key2}. 

\begin{proof}[Proof of Lemma \ref{lem:key1}]
To prove Lemma \ref{lem:key1}, it suffices to show that 
\begin{equation}\label{error'}
\lim_{n\to \infty}\left\|\int_0^t e^{(t-\tau)\Delta}(e_n(\tau))\,d\tau\right\|_{\mathcal K^q(T)} = 0
\end{equation}
for each $T>0$, where 
\[
e_n(\tau) := |x|^{-\gamma} |v_n(\tau)|^{2^*(\gamma)-2}v_n(\tau).
\]
In fact, the function 
\begin{equation}\label{function-t}
t^{\frac{d}{2}(\frac{1}{q_c}-\frac{1}{q})}\left\|\int_0^t e^{(t-\tau)\Delta}(e_n(\tau))\,d\tau\right\|_{L^q}
\end{equation}
is continuous and bounded in $t>0$ and converges to $0$ as $t\to\infty$.
Hence, there exists a time $T_n \ge0$ such that
it attains the maximum of \eqref{function-t} over $t\in [0,\infty)$.
If the limit superior of $T_n$ diverges as $n\to\infty$, then \eqref{error-linear} holds, as the function \eqref{function-t} 
converges to $0$ as $t\to\infty$. Hence we only consider the case where $\sup_{n} T_n <\infty$. 
Therefore we only have to prove \eqref{error'} for each $T>0$. 

Next, we will construct approximation sequences $\{\tilde{v}_{n,k}\}_{k=1}^\infty$ of $v_n$ with compact support in $\mathbb R^d$ and 
\begin{equation}\label{appro-3.1}
\sup_n \left\|\int_0^t e^{(t-\tau)\Delta}(e_n(\tau) - \tilde{e}_{n,k}(\tau))\,d\tau\right\|_{\mathcal K^q(T)} 
\to 0\quad \text{as }k\to\infty,
\end{equation}
where 
\begin{equation}\label{error-tilde}
\tilde{e}_{n,k} := - |x|^{-\gamma} |\tilde{v}_{n,k}|^{2^*(\gamma)-2}\tilde{v}_{n,k}.
\end{equation}
For $u_0 \in \dot H^1(\R^d)$, we define $v(t) := e^{t\Delta} u_0$. 
Let us define $\{\tilde{v}_{k}\}_{k=1}^\infty$ by 
\begin{equation}\label{def:v_k}
\tilde{v}_{k}(t,x) := \chi_k(x)v(t,x),
\end{equation}
where $\chi_k \in C^\infty_0(\mathbb R^d)$ is such that $\chi_k(x) \to 1$ as  $k\to\infty$ for each $x\in\mathbb R^d$. 
By Lebesgue's dominated convergence theorem, we have
\begin{equation}\label{pointwise}
\|v (t)-\tilde{v}_k (t)\|_{L^q} \to 0\quad \text{as }k\to\infty \text{ for each }t>0.
\end{equation}
Similarly to \eqref{function-t}, 
the function 
\begin{equation}\label{function-t2}
t^{\frac{d}{2}(\frac{1}{q_c}-\frac{1}{q})}\|v(t) - \tilde{v}_{k}(t)\|_{L^q}
\end{equation}
is continuous and bounded in $t>0$ and converges to $0$ as $t\to\infty$. 
Then, after possibly passing to a subsequence, there are $\tilde{T}_k$ and $\tilde{T}$ such that 
$\tilde{T}_k\to\tilde{T}$ as $k\to\infty$ and 
$\tilde{T}_k$ attains the maximum of \eqref{function-t2} over $t\in [0,\infty)$. 
If $\tilde{T}=+\infty$, then 
\begin{equation}\label{sufficient1}
\|v - \tilde{v}_{k}\|_{\mathcal K^q(\infty)}\to 0\quad \text{as $k\to \infty$}. 
\end{equation}
On the other hand, if $\tilde{T}<+\infty$, 
then 
\[
\begin{split}
\|v - \tilde{v}_{k}\|_{\mathcal K^q} 
& \le 
(\tilde{T} + 1)^{\frac{d}{2}(\frac{1}{q_c}-\frac{1}{q})} \|v(\tilde{T}_{k}) - \tilde{v}_k(\tilde{T}_{k})\|_{L^q}\\
& \le C
\big(
\|v(\tilde{T}_{k}) - v(\tilde{T})\|_{L^q}
+
\|v(\tilde{T}) - \tilde{v}_{k}(\tilde{T})\|_{L^q}
+
\|\tilde{v}_{k}(\tilde{T}) - \tilde{v}_{k}(\tilde{T}_{k})\|_{L^q}
\big)\\
& \le C
\big(
\|v(\tilde{T}_{k}) - v(\tilde{T})\|_{L^q}
+
\|v(\tilde{T}) - \tilde{v}_{k}(\tilde{T})\|_{L^q}
\big)
\end{split}
\]
for a sufficiently large $k$, where the definition of $\tilde{v}_{k}$ is used in the last step. 
The first and second terms on the right-hand side converge to $0$ as $k\to\infty$ by uniform continuity of $v$ in $t\in [0,T]$ and the pointwise convergence \eqref{pointwise}, respectively. 
Hence, we also obtain \eqref{sufficient1} in this case. 
We define the sequence $\{\tilde{v}_{n,k}\}_{k=1}^\infty$ by 
\begin{equation}\label{def:tilde-v}
\tilde{v}_{n,k}(t,x) :=\lambda_n^{-\frac{d-2}{2}} \tilde{v}_k\left( \frac{t}{\lambda_n^2}, \frac{x-x_n}{\lambda_n} \right),
\end{equation}
and the error term $\tilde{e}_{n,k}$ by \eqref{error-tilde}. 
Since 
\[
v_n(t,x) = \lambda_n^{-\frac{d-2}{2}} v\left( \frac{t}{\lambda_n^2}, \frac{x-x_n}{\lambda_n} \right)
=
\lambda_n^{-\frac{d-2}{2}}(e^{\frac{t}{\lambda_n^2}\Delta} u_0)\left(\frac{x-x_n}{\lambda_n}\right),
\]
we have
\begin{equation}\label{equality1}
\|v_n - \tilde{v}_{n,k}\|_{\mathcal K^q(T)}
= \|v - \tilde{v}_{k}\|_{\mathcal K^q(\lambda_n^{-2}T)}
\le \|v - \tilde{v}_{k}\|_{\mathcal K^q(\infty)}
\end{equation}
for any $n\in\mathbb N$. 
Hence,
\[
\begin{split}
\bigg\|\int_0^t e^{(t-\tau)\Delta}&(e_n(\tau) - \tilde{e}_{n,k}(\tau))\,d\tau\bigg\|_{\mathcal K^q(T)} \\
& \le C 
\big(\|v_n\|_{\mathcal K^q(T)}^{2^*(\gamma)-2} + \|\tilde{v}_{n,k}\|_{\mathcal K^q(T)}^{2^*(\gamma)-2} \big)
\|v_n - \tilde{v}_{n,k}\|_{\mathcal K^q(T)}\\
& \le C 
\big(\|v\|_{\mathcal K^q(\infty)}^{2^*(\gamma)-2} + \|\tilde{v}_{k}\|_{\mathcal K^q(\infty)}^{2^*(\gamma)-2} \big)
\|v - \tilde{v}_{k}\|_{\mathcal K^q(\infty)}.
\end{split}
\]
By combining the above estimate and the convergence \eqref{sufficient1}, 
we obtain \eqref{appro-3.1}. Thus, we can construct approximation sequences $\{\tilde{v}_{n,k}\}_{k=1}^\infty$ of $v_n$ with compact support in $\mathbb R^d$ and \eqref{appro-3.1}. 

Finally, if we can prove that
\begin{equation}\label{reduction}
\left\|\int_0^t e^{(t-\tau)\Delta}(\tilde{e}_{n,k}(\tau))\,d\tau\right\|_{\mathcal K^q(T)} 
\to 0\quad \text{as }n\to\infty\text{ for each }k\in\mathbb N,
\end{equation}
then we conclude \eqref{error'}. 
Hence, we will prove \eqref{reduction}. 
Making the changes $\tau' = \tau/\lambda_n^2$ and $x' = (x-x_n)/\lambda_n$, and then putting $t_n := t/\lambda_n^2$ and $\tilde{x}_n := x_n/\lambda_n$, 
we have 
\[
\begin{split}
& \left\|\int_0^t e^{(t-\tau)\Delta}(\tilde{e}_{n,k}(\tau))\,d\tau\right\|_{L^q}\\
& \le C \lambda_n^{-d(\frac{1}{q_c}-\frac{1}{q})}
\int_0^{t_n} (t_n - \tau')^{-\frac{d}{2}(\frac{2^*(\gamma)-1}{q} - \frac{1}{q})} 
\||\cdot + \tilde{x}_n |^{-\gamma} |\tilde{v}_{k} (\tau', \cdot)|^{2^*(\gamma)-1}
\|_{L^\frac{q}{2^*(\gamma)-1}}
\, d\tau'.
\end{split}
\]
Since the support of $\tilde{v}_{k}$ is compact, we have 
$|x+\tilde{x}_n|^{-\gamma} \sim |\tilde{x}_n|^{-\gamma}$ for a sufficiently large $n$. 
Hence 
\[
\left\|\int_0^t e^{(t-\tau)\Delta}(\tilde{e}_{n,k}(\tau))\,d\tau\right\|_{L^q}
\le  C 
|x_n |^{-\gamma}
t^{-\frac{d}{2}(\frac{1}{q_c}-\frac{1}{q}) + \frac{\gamma}{2}}
\|\tilde{v}_{k}
\|_{\mathcal K^q(t_n)}^{2^*(\gamma)-1},
\]
which implies that 
\[
\left\|\int_0^t e^{(t-\tau)\Delta}(\tilde{e}_{n,k}(\tau))\,d\tau\right\|_{\mathcal K^q(T)}
\le 
C T^\frac{\gamma}{2} |x_n|^{-\gamma} \|\tilde{v}_k\|_{\mathcal K^q}^{2^*(\gamma)-1}.
\]
Since $|x_n|^{-\gamma} \to 0$ as $n\to\infty$, 
we obtain \eqref{reduction}. The proof of Lemma \ref{lem:key1} is complete.
\end{proof}

\begin{proof}[Proof of Lemma \ref{lem:key2}]
We may assume that $u^j_0 \in \dot H^1(\R^d) \cap L^\infty(\R^d)$ for $j=0,1$ without loss of generality, taking (iii) in Proposition \ref{prop:wellposed1} into account.  
Let $\{v_k^j\}_{k=1}^\infty$ and $\{v_{n,k}^j\}_{k=1}^\infty$ be approximation sequences of $u^j$ and $u_n^j$ defined by 
\[
v_k^j(t,x) := \eta_k(t)\chi_k(x)u^j(t,x)
\quad \text{and}\quad 
v_{n,k}^j(t,x) :=\lambda_n^{-\frac{d-2}{2}} v_k^j\left( \frac{t}{\lambda_n^2}, \frac{x-x_n}{\lambda_n} \right),
\]
respectively, where $\chi_k \in C^\infty_0(\mathbb R^d)$ is such that $\chi_k(x) \to 1$ as  $k\to\infty$ for each $x\in\mathbb R^d$, and 
$\eta_k \in C^\infty([0,\infty))$ is such that $\eta_k(t) = 1$ for $t\in[0,k]$ and $\eta_k(t)=0$ for $t\in [2k,\infty)$. 
By a similar argument to the proof of  Lemma \ref{lem:key1}, 
to prove \eqref{eq.key}, it suffices to show that 
\begin{equation}\label{eq,key_k}
\lim_{n\to \infty} \sup_{t\in (0,T)}t^{\frac{d(2^*(\gamma)-1)}{2}(\frac{1}{q_c}-\frac{1}{q})}
\||v_{n,k}^1(t)|^{2^*(\gamma)-2}|v_{n,,k}^2(t)|\|_{L^{\frac{q}{2^*(\gamma)-1}}} = 0
\end{equation}
for each $T>0$ and $k\in\mathbb N$. 
Taking \eqref{aym-orth} into account, we divide the proof of \eqref{eq,key_k} into two cases: 
$\lambda_n^1/\lambda_n^2 \to 0$ or $+\infty$ and $|x_n^1 - x_n^2|^2/(\lambda_n^1\lambda_n^2)\to + \infty$. 

First, we consider the case $\lambda_n^1/\lambda_n^2 \to 0$. 
By making the changes $t'=t/(\lambda_n^1)^2$ and $x'=x/\lambda_n^1$, we have
\[
\begin{split}
&
\sup_{t\in (0,T)}t^{\frac{d(2^*(\gamma)-1)}{2}(\frac{1}{q_c}-\frac{1}{q})}
\||v_{n,k}^1(t)|^{2^*(\gamma)-2}|v_{n,k}^2(t)|\|_{L^{\frac{q}{2^*(\gamma)-1}}}\\
&
= \left(\frac{\lambda_n^1}{\lambda_n^2}\right)^{\frac{d-2}{2}} 
\sup_{t'\in (0, \frac{T}{(\lambda_n^1)^2})}
\Bigg(\int_{\R^d} 
\bigg(
\bigg|
v_{k}^1\bigg(t', x' - \frac{x_n^1}{\lambda_n^1} 
\bigg)\bigg|^{2^*(\gamma)-2}\\
& \qquad\qquad\qquad\qquad\qquad\qquad \times
\bigg|
v_{k}^2
\bigg(
\bigg(
\frac{\lambda_n^1}{\lambda_n^2}
\bigg)^2
t', \frac{\lambda_n^1}{\lambda_n^2}x' - \frac{x_n^2}{\lambda_n^2}
\bigg)
\bigg|
\bigg)^{\frac{q}{2^*(\gamma)-1}}\, 
dx
\Bigg)^{\frac{2^*(\gamma)-1}{q}}\\
& \le \left(\frac{\lambda_n^1}{\lambda_n^2}\right)^{\frac{d-2}{2}} 
\sup_{t\in (0,\infty)} \|v_{k}^1(t)\|_{L^\infty}^{2^*(\gamma)-2}
\|v_{k}^2(t)\|_{L^\infty} |\supp \chi_{k}|,
\end{split}
\]
where $|\supp \chi_{k}|$ is the measure of $\supp \chi_{k}$. 
Here, we note that 
\[
v_{k}^j \in L^\infty([0,\infty) ; L^\infty(\R^d))
\]
for $j=0,1$. 
Therefore, we obtain \eqref{eq,key_k} in the case $\lambda_n^1/\lambda_n^2 \to 0$. 
As to the case $\lambda_n^1/\lambda_n^2 \to +\infty$, 
we only have to make the changes $t'=t/(\lambda_n^2)^2$ and $x'=x/\lambda_n^2$, and 
perform the same argument as above.

Next, we consider the case $|x_n^1 - x_n^2|^2/(\lambda_n^1\lambda_n^2)\to + \infty$, 
which implies that $|x_n^1-x_n^2|/\lambda_n^1\to +\infty$ or $|x_n^1-x_n^2|/\lambda_n^2\to +\infty$ as $n\to \infty$. 
It suffices to consider the case where 
$\sup_{n}\lambda_n^1/\lambda_n^2\in (0,\infty)$ and $|x_n^1-x_n^2|/\lambda_n^1\to +\infty$ as $n\to \infty$, 
as the other cases are similar. 
By making the changes $t'=t/(\lambda_n^1)^2$ and  $x'=(x-x_n^1)/\lambda_n^1$, we have
\[
\begin{split}
&
\sup_{t\in (0,T)}t^{\frac{d(2^*(\gamma)-1)}{2}(\frac{1}{q_c}-\frac{1}{q})}
\||v_{n,k}^1(t)|^{2^*(\gamma)-2}|v_{n,k}^2(t)|\|_{L^{\frac{q}{2^*(\gamma)-1}}}\\
&
= \left(\frac{\lambda_n^1}{\lambda_n^2}\right)^{\frac{d-2}{2}} 
\sup_{t'\in (0, \frac{T}{(\lambda_n^1)^2})}
\Bigg(\int_{\R^d} 
\bigg(
|
v_{k}^1(t', x')|^{2^*(\gamma)-2}\\
& \qquad\qquad\qquad\qquad\qquad\qquad \times
\bigg|
v_{k}^2
\bigg(
\bigg(
\frac{\lambda_n^1}{\lambda_n^2}
\bigg)^2
t', \frac{\lambda_n^1}{\lambda_n^2}x' + \frac{x_n^1-x_n^2}{\lambda_n^2}
\bigg)
\bigg|
\bigg)^{\frac{q}{2^*(\gamma)-1}}\, 
dx
\Bigg)^{\frac{2^*(\gamma)-1}{q}}.
\end{split}
\]
Then, we also obtain \eqref{eq,key_k} 
for each $k\in\mathbb N$, 
as the integrand is identically zero for a sufficiently large $n$.  
From the above, \eqref{eq,key_k} is proved in all cases.
Thus, we conclude Lemma \ref{lem:key2}. 
\end{proof}

\subsection{Proof of the blow-up part (ii)}\label{sub:3.2}
In this subsection, we shall prove only the former part of (ii) in Theorem \ref{thm:GD}. 
We omit the proof of the latter part, as 
it is almost the same as in that of Theorem 2.7 in \cite{IT-arxiv}. 

The main idea of proof of the former part is as follows. 
Our proof is based on Levine's concavity method for the spatially localized solution. 
More precisely, 
the usual concavity method uses the $L^2$-norm of the solution; however, our solutions $u$ do not necessarily belong to $L^2(\mathbb R^d)$. 
Hence, instead of $u$ itself, we apply the concavity method to the $L^2$-norm of $\chi_R u$ multiplied by the cut-off function $\chi_R$.
Then, the remainder term $((\chi_R^2-1)u(t),\partial_t u(t))_{L^2(\mathbb R^d)}$ appears (see Lemma \ref{nehari2}). 
To deal with this remainder term, we use Lemma \ref{lem:key-blowup}. 
This lemma is technical but crucial, and is proved by the advantage of the decay effect of $|x|^{-\gamma}$ at infinity.

\medskip

Let us define a cut-off function $\chi \in C^\infty_0(\R^d)$ by 
\[
0 \le \chi(x) \le 1\quad \text{for }x\in\mathbb R^d\quad \text{and}\quad 
\chi(x) =
\begin{cases}
1\quad &\text{if }|x|\le 1,\\
0 &\text{if }|x|\ge 2, 
\end{cases}
\]
and $\chi_R (x) := \chi(x/R)$ for $R>0$.\\

We prepare two lemmas as follows:

\begin{lem}
\label{nehari2}
Let $d\ge 3$, $0<\gamma<2$, $u_0\in \dot{H}^1(\R^d)$, and $u$ be a mild solution to \eqref{crtHS} on $[0,T_m)$. 
Then, $\partial_tu\in L^{\infty}([0,T];\dot{H}^{-1}(\R^d))$ for any $T\in (0,T_m)$. Moreover, the identity
\[
\frac12\frac{d}{dt}\|\chi_R u(t)\|_{L^2(\mathbb R^d)}^2 = - J_\gamma(u(t)) + ((\chi_R^2-1)u(t),\partial_t u(t))_{L^2(\mathbb R^d)}
\]
holds for any $t\in [0,T_m)$, where $J_\gamma:\dot{H}^1(\R^d)\rightarrow\R$ is the Nehari functional given by \eqref{Nehari}. 
\end{lem}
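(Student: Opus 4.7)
The plan is to establish the two assertions in order. For the $\dot H^{-1}$-regularity of $\partial_t u$, I would rewrite the equation as $\partial_t u = \Delta u + |x|^{-\gamma}|u|^{2^*(\gamma)-2}u$, which is valid in the distributional sense via the mild formulation \eqref{integral-equation} and the classical regularity of Remark~\ref{rem:classical}. The Laplacian piece lies in $L^\infty([0,T];\dot H^{-1})$ since $-\Delta\colon\dot H^1\to\dot H^{-1}$ is the Riesz isomorphism and $u\in C([0,T];\dot H^1)$. For the nonlinear piece, testing against $\varphi\in\dot H^1$ and using H\"older in the measure $|x|^{-\gamma}\,dx$ followed by the Hardy-Sobolev inequality \eqref{HS-ineq} yields
\[
\Bigl|\int_{\R^d} \frac{|u|^{2^*(\gamma)-2}u\,\overline{\varphi}}{|x|^\gamma}\,dx\Bigr|
\le \|u\|_{L^{2^*(\gamma)}_\gamma}^{2^*(\gamma)-1}\|\varphi\|_{L^{2^*(\gamma)}_\gamma}
\le C\|u\|_{\dot H^1}^{2^*(\gamma)-1}\|\varphi\|_{\dot H^1},
\]
which delivers the claimed uniform bound.

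For the identity, the key trick is to \emph{avoid any integration by parts through the cutoff}. Writing $\chi_R^2 u = u + (\chi_R^2-1)u$, both summands belong to $\dot H^1$ (for $(\chi_R^2-1)u$ note that $\chi_R^2-1$ vanishes on $|x|\le R$, equals $-1$ on $|x|\ge 2R$, and is smooth in between, so the product is in $L^{q_c}$ with $L^2$ gradient). Pairing in the $\dot H^1$-$\dot H^{-1}$ duality gives
\[
\tfrac{1}{2}\tfrac{d}{dt}\|\chi_R u\|_{L^2}^2 = \mathrm{Re}\,\langle u,\partial_t u\rangle + \mathrm{Re}\,\langle(\chi_R^2-1)u,\partial_t u\rangle.
\]
Substituting the equation and using $\mathrm{Re}\,\langle u,\Delta u\rangle = -\|u\|_{\dot H^1}^2$ (from the Riesz identification), the first term on the right becomes
\[
\mathrm{Re}\,\langle u,\partial_t u\rangle = -\|u\|_{\dot H^1}^2 + \int_{\R^d}\frac{|u|^{2^*(\gamma)}}{|x|^\gamma}\,dx = -J_\gamma(u),
\]
which is exactly the stated identity; notice that no derivative of $\chi_R$ ever appears.

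The main obstacle will be the rigorous justification of the time differentiation, since $\partial_t u$ has only $\dot H^{-1}$-regularity. My plan is a smooth-approximation argument: take $u_0^{(n)}\in C_c^\infty(\R^d)$ with $u_0^{(n)}\to u_0$ in $\dot H^1(\R^d)$, use the lower semicontinuity of $T_m$ together with continuous dependence from Proposition~\ref{prop:wellposed1}(iii) to obtain $u^{(n)}\to u$ in $C([0,T];\dot H^1)$ on a fixed $[0,T]\subset[0,T_m(u_0))$, verify the identity for the approximates (where all quantities are classical and the pointwise computation is legitimate), and then pass to the limit. The left-hand side converges by continuity of $v\mapsto\|\chi_R v\|_{L^2}^2$ on $\dot H^1$ restricted to the compact support of $\chi_R$ (via the $\dot H^1\hookrightarrow L^{q_c}$ embedding); $J_\gamma$ is continuous on $\dot H^1$ by Hardy-Sobolev; and the cross-term converges because the nonlinear map $\dot H^1\to\dot H^{-1}$ estimated in the first paragraph is Lipschitz on bounded sets, yielding $\partial_t u^{(n)}\to\partial_t u$ in $L^\infty([0,T];\dot H^{-1})$.
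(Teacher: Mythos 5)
Your proof is correct and the heart of it --- the decomposition $(\chi_R u,\chi_R\partial_t u)_{L^2}=(u,\partial_t u)_{L^2}+((\chi_R^2-1)u,\partial_t u)_{L^2}$ followed by substituting the equation to identify $(u,\partial_t u)_{L^2}=-J_\gamma(u)$, with no derivative ever falling on $\chi_R$ --- is exactly the paper's argument. The one place where you take a different route is the bound $\partial_t u\in L^\infty([0,T];\dot H^{-1})$: the paper differentiates the Duhamel formula and invokes the nonlinear estimates of Lemma~\ref{l:crtHS.nonlin.est} (so the bound is expressed through $\|u_0\|_{\dot H^1}$ and $\|u\|_{\mathcal K^q(T)}$), whereas you read the bound off the differential equation itself, estimating the nonlinearity in $\dot H^{-1}$ by H\"older in the measure $|x|^{-\gamma}dx$ plus the Hardy--Sobolev inequality; your version is more self-contained and only uses $\|u\|_{L^\infty([0,T];\dot H^1)}$, while the paper's version recycles machinery already set up in the appendix. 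Your closing approximation argument to justify the time differentiation is extra care the paper omits (it simply asserts the chain of identities); it goes through, though the phrase ``all quantities are classical'' for the approximates is slightly off since solutions are never classical at the origin when $\gamma>0$ --- what you actually need, and have, is $u^{(n)}\in C([0,T];\dot H^1)$ together with $\partial_t u^{(n)}\in L^\infty([0,T];\dot H^{-1})$, which already makes $t\mapsto\|\chi_R u^{(n)}(t)\|_{L^2}^2$ differentiable via difference quotients.
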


\begin{proof}
In the similar manner as the proof of the estimate \eqref{nonlin.est3} in Lemma \ref{l:crtHS.nonlin.est}, there exists $C=C(T)>0$ such that
\[
\| \partial_tu \|_{L^{\infty}([0,T];\dot H^{-1}(\R^d))}
\le C\big(\| u_0 \|_{\dot{H}^1(\R^d)}
+
\|u \|_{L^{\infty}([0,T];\dot{H}^1(\R^d))}^{2^*(\gamma)-1}
+
\|u\|_{\mathcal K^q(T)}^{2^*(\gamma)-1}\big)
 < \infty,
\]
indicating that $\partial_t u \in L^\infty([0,T]; \dot{H}^{-1}(\mathbb R^d))$. Moreover, it follows that
\[
\sup_{t\in[0,T]}| (u(t), \partial_t u(t))_{L^{2}(\R^d)} | 
\le \| u \|_{L^{\infty}([0,T];\dot H^1(\R^d))} \| \partial_t u \|_{L^{\infty}([0,T];\dot H^{-1}(\R^d))} <\infty.
\]
Then, the identities
\[
\begin{split}
\frac{d}{dt}\|\chi_R u(t)\|_{L^2(\mathbb R^d)}^2
& = 2 (\chi_R u(t), \chi_R \partial_t u(t))_{L^{2}(\R^d)}\\
& = 2 (u(t), \partial_t u(t))_{L^{2}(\R^d)} + 2((\chi_R^2-1)u(t), \partial_t u(t))_{L^{2}(\R^d)}\\
& = -2 J_\gamma(u(t)) + 2((\chi_R^2-1)u(t), \partial_t u(t))_{L^{2}(\R^d)}
\end{split}
\]
hold for any $t\in (0,T_m)$, which completes the proof of Lemma \ref{nehari2}.
\end{proof}

\begin{lem}
\label{lem:key-blowup}
Let $d\ge 3$, $0<\gamma<2$, $u_0\in \dot{H}^1(\R^d)$, and $u$ be a mild solution to \eqref{crtHS} on $[0,T_m)$. 
Suppose that $T_m = +\infty$ and 
\begin{equation}\label{unfirom-bdd}
\limsup_{t\to \infty}\|u(t)\|_{\dot H^1(\R^d)} < \infty.
\end{equation}
Let $T_R := R^2 \|u_0\|_{L^{q_c}(|x|\ge R)}$ for $R>0$. 
Then,
\[
\lim_{R\to\infty}\sup_{t \in (0,T_R)} \left|((\chi_R^2-1)u(t), \partial_t u(t))_{L^{2}(\R^d)}\right| = 0.
\]
\end{lem}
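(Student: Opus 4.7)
The plan is to substitute the equation into the inner product, integrate by parts once in the Laplacian term, and thereby reduce the claim to the uniform-in-$t$ smallness of the tails of $u(t)$ in $L^{q_c}(|x|\geq R)$ and $\dot H^1(|x|\geq R)$. Writing $\partial_t u=\Delta u+|x|^{-\gamma}|u|^{2^*(\gamma)-2}u$ and integrating by parts (justified because $(\chi_R^2-1)u\in\dot H^1$ since $\nabla\chi_R^2$ is bounded with compact support and $u\in L^2_{\mathrm{loc}}$, while $\Delta u\in\dot H^{-1}$) yields
\[
((\chi_R^2-1)u,\partial_t u)_{L^2}=-\!\int\nabla(\chi_R^2)\,u\cdot\overline{\nabla u}\,dx+\!\int(1-\chi_R^2)|\nabla u|^2\,dx-\!\int(1-\chi_R^2)\frac{|u|^{2^*(\gamma)}}{|x|^\gamma}\,dx.
\]

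Each of the three pieces I would control by tail quantities. Using $|\nabla(\chi_R^2)|\leq CR^{-1}\mathbf{1}_{R\leq|x|\leq 2R}$, H\"older, and $\dot H^1\hookrightarrow L^{q_c}$, the first integral is bounded by $C\|u(t)\|_{L^{q_c}(R\leq|x|\leq 2R)}\|\nabla u(t)\|_{L^2}$; the second equals $\|\nabla u(t)\|_{L^2(|x|\geq R)}^2$; and applying the Hardy--Sobolev inequality \eqref{HS-ineq} to $(1-\chi_{R/2})u$ bounds the third by $C(\|\nabla u(t)\|_{L^2(|x|\geq R/2)}+\|u(t)\|_{L^{q_c}(R/2\leq|x|\leq R)})^{2^*(\gamma)}$. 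In view of the uniform bound \eqref{unfirom-bdd} on $\|u(t)\|_{\dot H^1}$, the task reduces to showing
\[
\sup_{t\in(0,T_R)}\bigl(\|u(t)\|_{L^{q_c}(|x|\geq R/2)}+\|\nabla u(t)\|_{L^2(|x|\geq R/2)}\bigr)\longrightarrow 0\quad\text{as }R\to\infty.
\]

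For this I would turn to the integral equation \eqref{integral-equation} and split the data (and the Duhamel source) using a cutoff $\chi_{R/8}$. The ``far'' contribution is controlled by a norm on $\{|y|\geq R/8\}$, which vanishes as $R\to\infty$ because $u_0\in\dot H^1\hookrightarrow L^{q_c}$ and because Hardy--Sobolev converts $\dot H^1$-tails of $u(\tau)$ into tails of $|x|^{-\gamma}|u(\tau)|^{2^*(\gamma)-1}$. The ``near'' contribution, supported in $|y|\leq R/4$ and evaluated at $|x|\geq R/2$, picks up a pointwise Gaussian factor $G(t-\tau,x-y)\leq C(t-\tau)^{-d/2}e^{-cR^2/(t-\tau)}$ since $|x-y|\geq R/4$ there. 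The definition $T_R=R^2\|u_0\|_{L^{q_c}(|x|\geq R)}$ forces $t/R^2\leq\|u_0\|_{L^{q_c}(|x|\geq R)}\to 0$, so this Gaussian decay defeats the polynomial losses in the heat-kernel smoothing estimates (Lemma~\ref{l:lin.est.HS}), including the one-derivative loss needed for the $\dot H^1$ tail.

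\textbf{Main obstacle.} The subtle step is the $\dot H^1$-tail rather than the $L^{q_c}$-tail: because $T_R$ may tend to infinity when $u_0$ decays slowly in $L^{q_c}$, one cannot fall back on short-time continuity of $u$ in $\dot H^1$ near $t=0$. The choice of $T_R$ encodes exactly the scaling that keeps $t/R^2$ small, and verifying that the Duhamel term together with its spatial gradient respects this Gaussian-versus-polynomial balance uniformly in $t\in(0,T_R)$ is the technical crux.
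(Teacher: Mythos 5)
Your overall architecture coincides with the paper's: substitute the equation into the pairing, integrate by parts, bound the three resulting terms via H\"older, Sobolev and Hardy--Sobolev, and thereby reduce the lemma to
\[
\lim_{R\to\infty}\ \sup_{t\in(0,T_R)}\|u(t)\|_{\dot H^1(|x|\ge R)}=0,
\]
which you then attack through the integral equation with a near/far splitting of the Duhamel source and the Gaussian gain $e^{-cR^2/(t-\tau)}$ played against $t/R^2\le\|u_0\|_{L^{q_c}(|x|\ge R)}\to0$. Up to and including the linear term and the near contribution, this is essentially the paper's proof.

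The gap is in the far Duhamel contribution, i.e.\ the source supported in $|y|\gtrsim R$. You propose to control it by the $\dot H^1$- and $L^{q_c}$-tails of $u(\tau)$ on $|y|\ge R/8$, asserting that Hardy--Sobolev converts these into tails of the nonlinearity. But the uniform-in-$\tau$ smallness of exactly these tails over $(0,T_R)$ is the statement you are in the middle of proving, so as written the step is circular; at best it would require a bootstrap that you do not set up. Moreover, even granting such a bootstrap, the estimate does not close: for the gradient of the Duhamel term, Lemma \ref{l:lin.est.HS}(ii) with $p_1=q_c/(2^*(\gamma)-1)$, $p_2=2$, $|\alpha|=1$ gives the time exponent $-\tfrac{d}{2}\bigl(\tfrac{2^*(\gamma)-1}{q_c}-\tfrac12\bigr)-\tfrac{1+\gamma}{2}=-1$ exactly, so $\int_0^t(t-\tau)^{-1}\,d\tau$ diverges and no smallness of the prefactor rescues it. The paper's mechanism for the far part is different and is the essential point of the lemma: write $|y|^{-\gamma}=|y|^{-(\gamma-\gamma_0)}\,|y|^{-\gamma_0}$ with $\gamma_0\in(0,\min(\gamma,1))$, absorb $|y|^{-(\gamma-\gamma_0)}$ into the kernel estimate to improve the singularity to $(t-\tau)^{-1+\gamma_0/2}$, and extract the factor $R^{-\gamma_0}$ from $|y|^{-\gamma_0}$ on $|y|\ge R/2$ using only the global bound $\|u\|_{L^\infty L^{q_c}}$ (no tail of $u(\tau)$ is needed); then $R^{-\gamma_0}T_R^{\gamma_0/2}=\|u_0\|_{L^{q_c}(|x|\ge R)}^{\gamma_0/2}\to0$. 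This is precisely where $\gamma>0$ enters and why the lemma has no analogue at $\gamma=0$; your sketch does not capture it.
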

\begin{proof}
By the assumption \eqref{unfirom-bdd}, we have
\begin{equation}\label{unfirom-bdd2}
\sup_{t\in (0,\infty)}\|u(t)\|_{L^{q_c}(\mathbb R^d)} \le \sup_{t\in (0,\infty)}\|u(t)\|_{\dot H^1(\mathbb R^d)} <\infty.
\end{equation}
Noting from Remark \ref{rem:classical} that $u$ satisfies the differential equation 
\[
\partial_t u = \Delta u + |x|^{-\gamma} |u|^{2^*(\gamma)-2}u\quad  \text{in } (0,\infty) \times \{|x|\ge R\}, 
\]
we write
\[
\left|((\chi_R^2-1)u(t), \partial_t u(t))_{L^{2}(\R^d)}\right| = I_R+II_R,
\]
where 
\[
I_R := \left|((\chi_R^2-1)u(t), \Delta u(t))_{L^{2}(\R^d)} \right|,
\]
\[
II_R := \left|((\chi_R^2-1)u(t), |x|^{-\gamma} |u(t)|^{2^*(\gamma)-2}u(t))_{L^{2}(\R^d)}\right|.
\]
By H\"older's inequality and the critical Sobolev embedding, we estimate
\[
\begin{split}
I_R 
& = 
\left|\int_{\R^d} \left(\nabla (\chi_R^2(x)-1) \cdot u(t,x) \nabla u(t,x) + (\chi_R^2(x)-1) |\nabla u(t,x)|^2\right)\, dx \right|\\
& \le C
\left( \|\nabla (\chi_R^2-1)\|_{L^d(\mathbb R^d)} \|u(t)\|_{L^{q_c}(R \le|x|\le 2R)} \|u(t)\|_{\dot H^1(|x|\ge R)}
+
\|u(t)\|_{\dot H^1(|x|\ge R)}^2
\right)\\
& \le C 
\|u(t)\|_{\dot H^1(|x|\ge R)}^2.
\end{split}
\]
By the Hardy-Sobolev inequality, we have
\[
\begin{split}
II_R 
& =
\int_{\mathbb R^d} \frac{|(1-\chi_R^2(x))^{\frac{1}{2^*(\gamma)}}u(t,x)|^{2^*(\gamma)}}{|x|^\gamma}\, dx\\
& \le C
\|(1-\chi_R^2)^{\frac{1}{2^*(\gamma)}}u(t)\|_{\dot H^1(\mathbb R^d)}^{2^*(\gamma)}\\
& \le C 
\|u(t)\|_{\dot H^1(|x|\ge R)}^{2^*(\gamma)}.
\end{split}
\]
Combining the just obtained estimates 
and the uniform bound \eqref{unfirom-bdd2}, we have
\[
\sup_{t \in (0,T_R)} \left|((\chi_R^2-1)u(t), \partial_t u(t))_{L^{2}(\R^d)}\right|
\le C \sup_{t \in (0,T_R)} \|u(t)\|_{\dot H^1(|x|\ge R)}^2.
\]
Hence, all we have to do is to prove that 
\begin{equation}\label{lem3.6-goal}
\lim_{R\to\infty}\sup_{t \in (0,T_R)} \|u(t)\|_{\dot H^1(|x|\ge R)} =0.
\end{equation}
We proceed to estimate
\begin{multline}\label{sec3.2-1}
\|u(t)\|_{\dot H^1(|x|\ge R) }
\le \|e^{t\Delta} u_0\|_{\dot H^1(|x|\ge R)} \\
+ 
\left\|
\int_0^t 
e^{(t-\tau)\Delta} (|x|^{-\gamma} |u(\tau)|^{2^*(\gamma)-2}u(\tau))\, d\tau
\right\|_{\dot H^1(|x|\ge R)}.
\end{multline}
For the first term, the stronger assertion
\begin{equation}\label{sec3.2-2}
\lim_{R\to\infty}\sup_{t\in [0,\infty)}  \|e^{t\Delta} u_0\|_{\dot H^1(|x|\ge R)}=0
\end{equation}
holds. In fact, as $e^{t\Delta} u_0$ is dissipative, 
for any $R>0$, there exists $t_R \in [0, \infty)$ such that 
it attains the supremum of $\|e^{t\Delta} u_0\|_{\dot H^1(|x|\ge R)}$ over $t\in [0,\infty)$. 
Moreover, after possibly passing to a subsequence (in which case, we rename it $t_R$), 
there exists $t_\infty \in [0,\infty]$ such that $t_R \to t_\infty$ as $R\to\infty$. 
If $t_\infty <\infty$, then 
\[
\begin{split}
\sup_{t\in [0,T_R]}  \|e^{t\Delta} u_0\|_{\dot H^1(|x|\ge R)}
& = \|e^{t_R\Delta} u_0\|_{\dot H^1(|x|\ge R)}\\
& \le \|e^{t_R\Delta} u_0 - e^{t_\infty\Delta} u_0\|_{\dot H^1(|x|\ge R)} + \|e^{t_\infty\Delta} u_0\|_{\dot H^1(|x|\ge R)}.
\end{split}
\]
Because $e^{t\Delta}u_0 \in C([0,T];\dot H^1(\R^d))$ for any $T>0$, 
the right-hand side in the above converges to $0$ as $R\to \infty$.
If $t_\infty=\infty$, then it also converges to $0$ as $R\to\infty$, as $e^{t\Delta}u_0$ is dissipative. 
Hence, \eqref{sec3.2-2} is obtained.

For the second term, 
we write 
\begin{equation}\label{sec3.2-3}
\big(e^{(t-\tau)\Delta} (|x|^{-\gamma} |u(\tau)|^{2^*(\gamma)-2}u(\tau)\big)(x) =  A_R(t-\tau, x) + B_R(t-\tau, x),
\end{equation}
where
\[
A_R(t-\tau, x) := \int_{|y|\le \frac{R}{2}}
G(t-\tau, x-y) |y|^{-\gamma}|u(\tau,y)|^{2^*(\gamma)-2}u(\tau,y)\, dy,
\]
\[
B_R(t-\tau, x) := \int_{|y|> \frac{R}{2}}
G(t-\tau, x-y) |y|^{-\gamma}|u(\tau,y)|^{2^*(\gamma)-2}u(\tau,y)\, dy.
\]
Here, we recall that $G$ is the heat kernel given in \eqref{gaussian-kernel}. 
First, we estimate the term $A_R(t-\tau, x)$.
If $|x|\ge R$ and $|y|\le R/2$, then 
\[
|x-y|^2 \ge \frac{1}{2}|x-y|^2 + \frac{R^2}{8};
\]
hence, 
\[
\begin{split}
|\nabla_x G(t-\tau, x-y) |
& = (4\pi (t-\tau))^{-\frac{d}{2}} \frac{|x-y|}{2(t-\tau)} e^{-\frac{|x-y|^2}{4(t-\tau)}}\\
& \le 
2e^{-\frac{R^2}{32(t-\tau)}}(4\pi (t-\tau))^{-\frac{d}{2}} \frac{|x-y|}{2\cdot 2(t-\tau)} e^{-\frac{|x-y|^2}{4 \cdot 2(t-\tau)}}
\end{split}
\]
for any $\tau\in (0,t)$ and $|x|\ge R$ and $|y|\le R/2$.
Then, by (ii) in Lemma \ref{l:lin.est.HS} and the uniform bound \eqref{unfirom-bdd2}, we have
\[
\begin{split}
&\left\|
\int_0^t 
A_R(t-\tau, x)\, d\tau
\right\|_{\dot H^1(|x|\ge R)}\\
& \le 
C\int_0^t 
e^{-\frac{R^2}{32(t-\tau)}} (t-\tau)^{-1} 
\|
|u(\tau)|^{2^*(\gamma)-2}u(\tau)
\|_{L^\frac{q_c}{2^*(\gamma)-1}(|x|< \frac{R}{2})}\, d\tau\\
& \le 
C\int_0^t 
e^{-\frac{R^2}{32(t-\tau)}} (t-\tau)^{-1} \, d\tau
\cdot 
\|u\|_{L^\infty([0,\infty); L^{q_c}(\R^d))}^{2^*(\gamma)-1}\\
& \le 
C \int_0^{\|u_0\|_{L^{q_c}(|x|\ge R)}} 
e^{-\frac{1}{32\tau'}} \tau'^{-1} \, d\tau'
\end{split}
\]
for any $t\in (0,T_R)$. Hence
\begin{equation}\label{sec3.2-4}
\lim_{R\to\infty} \sup_{t\in [0,T_R]}\left\|
\int_0^t 
A_R(t-\tau, x)\, d\tau
\right\|_{\dot H^1(|x|\ge R)} =0.
\end{equation}
Next, we estimate the term $B_R(t-\tau, x)$.
Again using (ii) in Lemma \ref{l:lin.est.HS}, we estimate
\[
\begin{split}
& \left\|
\int_0^t 
B_R(t-\tau, x)\, d\tau
\right\|_{\dot H^1(|x|\ge R)}\\
& \le 
C \int_0^t (t-\tau)^{-1 + \frac{\gamma_0}{2}} 
\|
|x|^{-\gamma_0}|u(\tau)|^{2^*(\gamma)-2}u(\tau)
\|_{L^\frac{q_c}{2^*(\gamma)-1}(|x|\ge R)}\, d\tau\\
& \le C R^{-\gamma_0}
\int_0^t (t-\tau)^{-1 + \frac{\gamma_0}{2}} \, d\tau
\cdot \|u\|_{L^\infty([0,\infty); L^{q_c}(\R^d))}^{2^*(\gamma)-1}\\
& \le C R^{-\gamma_0} T_R^{\frac{\gamma_0}{2}}\\
& = C \|u_0\|_{L^{q_c}(|x|\ge R)}^{\frac{\gamma_0}{2}}
\end{split}
\]
for any $t\in (0,T_R)$, 
where $\gamma_0 \in (0, \gamma]$ satisfies 
\[
\frac12 < \frac{\gamma-\gamma_0}{d} + \frac{2^*(\gamma)-1}{q_c}\quad \text{i.e.,}\quad \gamma_0 <1.
\]
Hence,
\begin{equation}\label{sec3.2-5}
\lim_{R\to\infty} \sup_{t\in [0,T_R]}\left\|
\int_0^t 
B_R(t-\tau, x)\, d\tau
\right\|_{\dot H^1(|x|\ge R)} =0.
\end{equation}
Therefore, by summarizing \eqref{sec3.2-1}--\eqref{sec3.2-5}, we obtain \eqref{lem3.6-goal}. 
The proof of Lemma \ref{lem:key-blowup} is thus complete.
\end{proof}

With the proof of Lemma \ref{lem:key-blowup} complete,
we are now in a position to prove the former part of (ii) in Theorem \ref{thm:GD}.

\begin{proof}[Proof of the former part of {\rm (ii)} in Theorem \ref{thm:GD}]
Let $u_0 \in \mathcal M^-$ with $E_\gamma(u_0) < l_{HS}$. 
For contradiction, we suppose that $T_m(u_0) = +\infty$ and 
\[
\limsup_{t\to \infty}\|u(t)\|_{\dot H^1(\R^d)} < \infty.
\]
Given $R>0$ and $A>0$, we define a function $I_R:[0,\infty)\rightarrow [0,\infty)$ given by
\[
I_R(t) := \int_0^t \| \chi_R u(s) \|_{L^2(\R^d)}^2 \,ds + A.
\]
Here, by H\"older's inequality and the critical Sobolev embedding $\dot{H}^1(\R^d)\hookrightarrow L^{q_c}(\R^d)$, there exists a constant $C>0$ depending only on $d$ such that 
\begin{equation}\label{ineq-R}
\| \chi_R u(t) \|_{L^2(\R^d)} \le \|\chi_R\|_{L^{d}(\R^d)} \|u(t) \|_{L^{q_c}(|x|\ge R)} \le CR\| u(t) \|_{\dot H^1(\R^d)}
\end{equation}
for any $t\ge 0$, which implies that $\chi_R u (t) \in L^2(\R^d)$ for any $t\ge 0$. 
Then, by a direct computation and Lemma \ref{nehari2}, the identities
\begin{align}
I_R'(t)&=\| \chi_R u(t) \|_{L^2(\R^d)}^2,\notag\\
I_R''(t)&= -2 J_\gamma (u(t)) + 2((\chi_R^2-1)u(t), \partial_t u(t))_{L^{2}(\R^d)}
\label{twice deriI}
\end{align}
hold for any $t>0$. Set $T_R := R^2\|u_0\|_{L^{q_c}(|x|\ge R)}$. 
Then, we can prove that the estimate
\begin{equation}\label{eq.keypoint2}
I_R''(t)I_R(t) - (1+\alpha)I'_R(t)^2 > 0
\end{equation}
holds for any $t\in (0,T_R]$, if $R$ and $A$ are sufficiently large and $\alpha>0$ is sufficiently small. 
This fact is verified as follows. 
For any $\varepsilon>0$, by a fundamental calculous and Schwartz's inequality, the estimates
\begin{align*}
I_R'(t)^2
& = \left(\| \chi_Ru_0\|_{L^2(\R^d)}^2 + 2 \Re \int_0^t (\chi_Ru,\chi_R\partial_t u)_{L^2(\R^d)}\,ds\right)^2\\
& \le (1+\varepsilon^{-1}) \| \chi_Ru_0\|_{L^2(\R^d)}^4 + 4(1+\varepsilon)\left(\int_0^t (\chi_Ru,\chi_R\partial_t u)_{L^2(\R^d)}\,ds\right)^2\\
& \le (1+\varepsilon^{-1}) \| \chi_Ru_0\|_{L^2(\R^d)}^4 \\
& \qquad \qquad \qquad + 4(1+\varepsilon)
\left(\int_0^t \| \chi_Ru(s)\|_{L^2(\R^d)}^2\,ds\right) \left(\int_0^t \| \chi_R\partial_t u (s)\|_{L^2(\R^d)}^2\,ds\right)
\end{align*}
hold for any $t>0$. Next, we estimate $I_R''(t)$ from below. By the identity \eqref{twice deriI}, statement (iii) in Lemma \ref{lem:invariant} and the energy identity \eqref{energy-id2}, the estimates
\begin{align*}
I_R''(t)
&\ge 2\cdot 2^*(\gamma) \{ l_{HS} - E(u(t))\}+ 2((\chi_R^2-1)u(t), \partial_t u(t))_{L^{2}(\R^d)}\\
&\ge
2\cdot2^*(\gamma) \left( l_{HS} - E(u_0) + \int_0^t \| \chi_R \partial_t u (s)\|_{L^2(\R^d)}^2\,ds \right)\\
& \qquad \qquad \qquad \qquad \qquad \qquad \qquad \qquad - 2|((\chi_R^2-1)u(t), \partial_t u(t))_{L^{2}(\R^d)}|
\end{align*}
hold for any $t\in (0,T_m)$. By Lemma \ref{lem:key-blowup}, there exists $R_1>0$ such that
\[
\sup_{t\in [0, T_R]} 2|((\chi_{R_1}^2-1)u(t), \partial_t u(t))_{L^{2}(\R^d)}| \le 2^*(\gamma) \{l_{HS} - E(u_0)\}
\]
for any $R\ge R_1$.
Hence, 
\[
I_{R}''(t) 
\ge 
2\cdot2^*(\gamma) \left( \frac{l_{HS} - E(u_0)}{2} + \int_0^t \| \chi_{R} \partial_t u (s)\|_{L^2(\R^d)}^2\,ds \right)
\]
for any $t\in (0,T_R)$ and $R\ge R_1$. 
Let $\alpha>0$. By summarizing the above estimates,
we have
\[
\begin{split}
&I_{R}''(t)I_{R}(t) - (1+\alpha)I_{R}'(t)^2 \\
\ge &\,
2\cdot2^*(\gamma) \left( \frac{l_{HS} - E(u_0)}{2} + \int_0^t \| \chi_{R}\partial_t u (s)\|_{L^2(\R^d)}^2\,ds \right)
\left(  \int_0^t \|\chi_{R} u(s)\|_{L^2(\R^d)}^2\,ds + A \right)\\
- &\, 4(1+\alpha)(1+\varepsilon) \left(\int_0^t \| \chi_{R} u(s)\|_{L^2(\R^d)}^2\,ds\right)
\left(\int_0^t \| \chi_{R} \partial_t u (s)\|_{L^2(\R^d)}^2\,ds\right) \\
- &\, (1+\alpha)(1+\varepsilon^{-1}) \| \chi_{R}u_0\|_{L^2(\R^d)}^4
\end{split}
\]
for any $t\in (0,T_R)$ and $R\ge R_1$. 
Hence, 
specifying a sufficiently large $A$ as well as sufficiently small $\varepsilon$ and $\alpha$ 
so that
\[
2\cdot2^*(\gamma) > 4(1+\alpha)(1+\varepsilon)
\quad \text{and}\quad 
2^*(\gamma)\{l_{HS} - E(u_0)\} A > (1+\alpha)(1+\varepsilon^{-1}) \| \chi_{R}u_0\|_{L^2(\R^d)}^4
\]
for any $t\in (0,T_R)$ and $R\ge R_1$, 
we obtain \eqref{eq.keypoint2}. Here, we note that $2^*(\gamma)>2$. 
We take 
\[
A = A(R) = \frac{2(1+\alpha)(1+\varepsilon^{-1}) \| \chi_{R}u_0\|_{L^2(\R^d)}^4}{2^*(\gamma)\{l_{HS} - E(u_0)\}}.
\]
The inequality \eqref{eq.keypoint2} is equivalent to 
\[
\frac{d}{dt}\left(
\frac{I_R'(t)}{I_R(t)^{\alpha+1}} \right)
>0.
\]
This implies that 
\[
\frac{I_R'(t)}{I_R(t)^{\alpha+1}} > \frac{I_R'(0)}{I_R(0)^{\alpha+1}} = \frac{\| \chi_Ru_0\|_{L^2}^2}{A^{\alpha+1}}=: a.
\]
By integrating the above inequality over $t \in [0,t]$, we have
\[
\frac{1}{\alpha} \left(
\frac{1}{I_R(0)^\alpha }- \frac{1}{I_R(t)^\alpha}
\right)
> at;
\]
hence, 
\[
I_R(t)^\alpha >
\frac{I(0)^\alpha}{1-I_R(0)^\alpha\alpha a t} \to +\infty
\]
as $t \to 1/(I_R(0)^\alpha\alpha a )= A/(\alpha \|\chi_Ru_0\|_{L^2}^2 ) =:\tilde{t}_R$. 
Now, there exists $R_2>0$ such that $\tilde{t}_R \le T_R$ for any $R\ge R_2$, 
as 
\[
\tilde{t}_R = \frac{A}{\alpha \|\chi_Ru_0\|_{L^2}^2} = C \|\chi_Ru_0\|_{L^2}^2 \le CR^2 \|u_0\|_{L^{q_c}(|x|\ge R)}^2 
\quad \text{and}\quad \lim_{R\to\infty}\|u_0\|_{L^{q_c}(|x|\ge R)}=0.
\]
Set $R_0 := \max(R_1,R_2)$. 
Then, we obtain 
\[
\limsup_{t\to \tilde{t}_{R_0}-} \|\chi_{R_0}u(t)\|_{L^2} = +\infty.
\]
Therefore, we find from \eqref{ineq-R} that
\begin{equation*}\label{eq.blowup2}
\limsup_{t\to \tilde{t}_{R_0}-} \| u(t)\|_{\dot H^1(\R^d)} = +\infty.
\end{equation*}
However, this contradicts $T_m=T_m(u_0) = +\infty$; that is,
\[
u \in C([0,T]; \dot H^1(\R^d))\quad \text{for any $T>0$}
\]
by the result on local well-posedness in Proposition \ref{prop:wellposed1}. 
Thus, we conclude that $T_m < + \infty$ or 
\[
\limsup_{t\to \infty}\|u(t)\|_{\dot H^1(\R^d)} = \infty.
\]
The proof of the former part of (ii) in Theorem~\ref{thm:GD} is complete.
\end{proof}

\section{The absorbing case and Dirichlet problem}\label{sec:4}

\subsection{The absorbing case}\label{sub:4.1}
In this subsection, we mention the absorbing case:
\begin{equation}\label{crtHS-a}
\begin{cases}
\partial_t u - \Delta u = -|x|^{-\gamma} |u|^{2^*(\gamma)-2} u,&(t,x)\in (0,T)\times\R^d, \\
u(0) = u_0.
\end{cases}
\end{equation}
The problem \eqref{crtHS-a} is locally well-posed in $L^{q_c}(\R^d)$ and $\dot H^1(\R^d)$ (see Proposition \ref{prop:wellposed-A} below). 
Moreover, we have the following.

\begin{thm}[Large data dissipation in the absorbing case]\label{thm:absor}
Let $d\ge 3$ and $0\le \gamma <2$. Then, the solution to \eqref{crtHS-a} with initial data in $L^{q_c}(\mathbb R^d)$ is dissipative. 
\end{thm}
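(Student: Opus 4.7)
The strategy mirrors the proof of statement (i) in Theorem \ref{thm:GD}, i.e., concentration-compactness in the spirit of Kenig-Merle. The absorbing sign of the nonlinearity removes both the variational threshold (there is no ground-state obstruction here) and the difficulty of rigidity. Accordingly, I take the critical quantity to be the $L^{q_c}$-norm of the initial data rather than any Nehari-constrained energy.

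First, I define
\[
M^c := \sup\{M \ge 0 \,:\, \text{every } u_0\in L^{q_c}(\R^d) \text{ with } \|u_0\|_{L^{q_c}}\le M \text{ yields a dissipative solution to \eqref{crtHS-a}}\}.
\]
By the small-data theory for \eqref{crtHS-a} in $L^{q_c}(\R^d)$ (Proposition \ref{prop:wellposed-A}), $M^c > 0$, and the theorem reduces to proving $M^c = +\infty$. I argue by contradiction, assuming $M^c < +\infty$, and choose $\phi_n\in L^{q_c}(\R^d)$ with $\|\phi_n\|_{L^{q_c}}\searrow M^c$ whose associated solutions $u_n$ to \eqref{crtHS-a} satisfy $\|u_n\|_{\mathcal K^q}=+\infty$. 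Then I apply the $L^{q_c}$-version of the linear profile decomposition of Proposition \ref{prop:profile} and run the single-profile reduction of Lemma \ref{lem:singleprofile}. Lemmas \ref{lem:key1} and \ref{lem:key2}, together with the perturbation result Proposition \ref{prop:perturbation}, are insensitive to the sign of the nonlinearity, since they rely only on the elementary estimate $\big||a|^{p-1}a-|b|^{p-1}b\big|\lesssim (|a|^{p-1}+|b|^{p-1})|a-b|$. This yields a \emph{critical element}: a global solution $v^c$ to \eqref{crtHS-a} with $\|v^c(0)\|_{L^{q_c}} = M^c$ and $\|v^c\|_{\mathcal K^q} = +\infty$, whose orbit is pre-compact in $L^{q_c}(\R^d)$ modulo the scaling symmetry of \eqref{crtHS-a}.

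For the rigidity step, I use the absorbing energy
\[
E^-_\gamma(f) := \tfrac{1}{2}\|f\|_{\dot H^1}^2 + \tfrac{1}{2^*(\gamma)}\int_{\R^d}\frac{|f(x)|^{2^*(\gamma)}}{|x|^\gamma}\,dx,
\]
which is nonnegative and satisfies $\frac{d}{dt}E^-_\gamma(v^c(t)) = -\|\partial_t v^c(t)\|_{L^2}^2$. Consequently $\int_0^\infty \|\partial_t v^c(t)\|_{L^2}^2\,dt < \infty$, so along a suitable sequence $t_n\to\infty$ the pre-compactness of the orbit produces a limit $\phi$ satisfying the stationary equation $-\Delta\phi+|x|^{-\gamma}|\phi|^{2^*(\gamma)-2}\phi=0$. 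Testing with $\bar\phi$ gives $\|\phi\|_{\dot H^1}^2+\|\phi\|_{L^{2^*(\gamma)}_\gamma}^{2^*(\gamma)}=0$, hence $\phi\equiv 0$, which, by compactness of the orbit, contradicts $\|v^c(0)\|_{L^{q_c}}=M^c>0$.

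The main obstacle is technical: porting the profile decomposition and perturbation machinery of Section \ref{sec:2}, which are phrased at the $\dot H^1$ level, to the scaling-critical Lebesgue space $L^{q_c}$. This requires either a standalone Bahouri-G\'erard-Keraani-Merle-type decomposition for $L^{q_c}$-bounded sequences, or an approximation argument combined with the boundedness $\|e^{t\Delta}f\|_{\mathcal K^q}\lesssim \|f\|_{L^{q_c}}$. The variational and rigidity components are, by contrast, essentially automatic here, owing to the defocusing sign.
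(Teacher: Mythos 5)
Your proposal takes a genuinely different and substantially heavier route than the paper. The paper does \emph{not} run a concentration--compactness argument in $L^{q_c}$ at all: it first restricts to $u_0\in\dot H^1(\R^d)$, defines the critical level $E^c$ in terms of the (now nonnegative) energy exactly as in \eqref{def:Ec}, observes that $J_\gamma\ge 0$ identically in the absorbing case so there is no threshold, and reruns the Subsection~\ref{sub:3.1} machinery verbatim. The rigidity there is the same two-line argument as in the focusing case: the critical element satisfies $E^c=E_\gamma(v^c(t_0))$ together with $E_\gamma(v^c(t))\le E^c$, so the energy identity forces $v^c$ to be stationary, hence zero, hence $E^c=0$, a contradiction; no precompactness of the orbit is ever invoked. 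The passage from $\dot H^1$ data to general $L^{q_c}$ data is then done by density of $\dot H^1\cap L^{q_c}$ in $L^{q_c}$ combined with the stability result Proposition~\ref{prop:perturbation-A} (with $X=L^{q_c}$). This last reduction is the key device you are missing: it makes the $L^{q_c}$ statement a corollary of the $\dot H^1$ argument and removes any need for an $L^{q_c}$-level profile decomposition.

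As written, your argument has concrete gaps. First, the profile decomposition of Proposition~\ref{prop:profile} and the decouplings \eqref{profile_5}--\eqref{profile_6} are established only for $\dot H^1$-bounded sequences; a Bahouri--G\'erard--Keraani--Merle decomposition for merely $L^{q_c}$-bounded sequences is a genuinely different (Besov-refinement) theorem that neither the paper nor your sketch supplies, and it cannot be dismissed as routine. Second, the induction on $M^c=\sup\|u_0\|_{L^{q_c}}$ requires that the critical quantity be nonincreasing along the flow (otherwise the time-translates of $v^c$ need not remain minimal and the asserted precompactness of the orbit fails); you never verify $\|u(t)\|_{L^{q_c}}\le\|u_0\|_{L^{q_c}}$, which for the absorbing equation would itself need a Kato-type comparison argument. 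Third, the precompactness of the orbit of $v^c$ modulo scaling is not a by-product of the construction of the critical element but a separate and delicate step; for parabolic problems it is precisely the step that tends to require backward-uniqueness-type tools, which the paper explicitly advertises avoiding. Your rigidity endgame (stationary limits along $t_n\to\infty$ via $\int_0^\infty\|\partial_t v^c\|_{L^2}^2\,dt<\infty$) is sound in spirit once those ingredients are in place, but the paper's route reaches the same conclusion without any of them.
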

\begin{proof}
First, we consider the case in which initial data $u_0 \in \dot H^1(\R^d)$. 
The strategy of proof is almost the same as in (i) in Theorem \ref{thm:GD}. 
In fact, we suppose that $E^c<+\infty$, where $E^c$ is given in \eqref{def:Ec}, i.e.,
\[
\begin{split}
E^c 
:= \sup\big\{ &E \in \R : \ \text{$T_m(u_0)=+\infty$ and $\|u\|_{\mathcal K^q}<\infty$}\\
& \text{ for any solution $u$ to \eqref{crtHS-a} with $u_0\in \dot H^1(\R^d)$ and $E_\gamma (u_0) < E$}
\big\}.
\end{split}
\]
Here, we note that $J_\gamma(\phi)$ is always non-negative for any $\phi \in \dot H^1(\R^d)$ in the absorbing case. 
We can apply the same argument as in Subsection \ref{sub:3.1} to obtain $E^c=0$. 
This contradicts $E^c>0$ by small-data global existence. 
Hence, we obtain $E^c=+\infty$ by contradiction. 
$E^c=+\infty$ means that all solutions $u$ to \eqref{crtHS-a} with initial data $u_0\in \dot H^1(\R^d)$ are global in time and dissipative 
by Proposition \ref{prop:diss-A} below. 
Finally, by the density argument and Proposition~\ref{prop:perturbation-A}, we can prove that 
all solutions $u$ to \eqref{crtHS-a} with initial data $u_0\in L^{q_c}(\R^d)$ are also global in time and dissipative in $L^{q_c}(\R^d)$. 
Thus, we conclude Theorem \ref{thm:absor}.
\end{proof}

\subsection{Dirichlet problem}\label{sub:4.2}
Let $\Omega$ be a domain of $\R^d$ that contains the origin $0$. 
We consider the Dirichlet problem of the critical Hardy-Sobolev parabolic equation:
\begin{equation}\label{crtHS-D}
\begin{cases}
\partial_t u - \Delta u = |x|^{-\gamma} |u|^{2^*(\gamma)-2} u,&(t,x)\in (0,T)\times\Omega, \\
u|_{\partial \Omega} = 0,\\
u(0) = u_0.
\end{cases}
\end{equation}
For simplicity, we take initial data $u_0$ as a function in the inhomogeneous space $H^1_0(\Omega)$. 
The problem \eqref{crtHS-D} is locally well-posed in $H^1_0(\Omega)$ (see Proposition \ref{prop:wellposed-A} below). 
To state our result, we introduce the energy functional $E_{\gamma,\Omega} : H^1_0(\Omega) \to \R$ and the Nehari functional $J_{\gamma,\Omega} : H^1_0(\Omega) \to \R$
associated with \eqref{crtHS-D} as follows:
\[
E_{\gamma,\Omega}(\phi) := \frac{1}{2} \|\phi\|_{\dot H^1(\Omega)}^2 
	- \frac{1}{2^*(\gamma)} 
	\int_{\Omega} \frac{ |\phi(x)|^{2^*(\gamma)} }{|x|^\gamma} dx,
\]
\[
J_{\gamma,\Omega}(\phi) := \frac{d}{d\lambda} E_{\gamma,\Omega}(\lambda \phi)|_{\lambda=1} = \|\phi\|_{\dot H^1(\Omega)}^2 - \int_{\Omega} \frac{ |\phi(x)|^{2^*(\gamma)} }{|x|^\gamma} dx.
\]
Moreover, we define the mountain pass energy $l_{HS}(\Omega)$ by 
\[
l_{HS}(\Omega)
:= 
\inf_{\phi\in H^1_0(\Omega) \setminus \{0\}} \max_{\lambda\ge0} E_{\gamma,\Omega}(\lambda \phi).
\]
Then, it is represented by the best constant $C_{HS}(\Omega)$ of the Hardy-Sobolev inequality \eqref{HS-ineq} as follows:
\[
l_{HS}(\Omega)=\frac{2-\gamma}{2(d-\gamma)} C_{HS}(\Omega)^{\frac{2(d-\gamma)}{2-\gamma}}
\]
(see, e.g., Appendix C in \cite{IT-arxiv}). 
Hence, $l_{HS}(\Omega)$ is independent of $\Omega$ by Lemma \ref{t:HrdySob2},
and we simply write $l_{HS}=l_{HS}(\Omega)$.

\medskip

 As a corollary of Theorem \ref{thm:GD} and Lemma \ref{lem:comparison} below, 
 we have a dichotomy between dissipation and blow-up for solutions to the problem \eqref{crtHS-D}. More precisely, we have the following:

\begin{thm}\label{thm:GD-D}
Let $d\ge 3$ and $0\le \gamma <2$, and let $u=u(t)$ be a solution to \eqref{crtHS-D} with initial data $u_0 \in H^1_0(\Omega)$ with $E_{\gamma,\Omega}(u_0)\le l_{HS}$. Then, 
the following statements hold{\rm :}
\begin{itemize}
\item[(i)] If $J_{\gamma,\Omega}(u_0)>0$, then $u$ is dissipative.
\item[(ii)] If $J_{\gamma,\Omega}(u_0)<0$, then $u$ blows up in finite time.
\end{itemize}
\end{thm}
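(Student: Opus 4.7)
The theorem is obtained by transferring the Cauchy-problem dichotomy in Theorem \ref{thm:GD} to the Dirichlet setting via two devices: the zero-extension of initial data from $\Omega$ to $\R^d$, which preserves the relevant variational quantities, and the comparison principle in Lemma \ref{lem:comparison} that bounds $|u_\Omega|$ by $|u_{\R^d}|$ pointwise. The crucial structural fact enabling this reduction is that $l_{HS}(\Omega) = l_{HS}$ is independent of $\Omega$, as recorded in the paragraph preceding the theorem statement and traceable back to Lemma \ref{t:HrdySob2}.

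\textbf{Part (i).} Given $u_0 \in H^1_0(\Omega)$ with $E_{\gamma,\Omega}(u_0) \le l_{HS}$ and $J_{\gamma,\Omega}(u_0) > 0$, let $\tilde u_0 \in \dot H^1(\R^d)$ denote the trivial extension by zero. Then $E_\gamma(\tilde u_0) = E_{\gamma,\Omega}(u_0)$ and $J_\gamma(\tilde u_0) = J_{\gamma,\Omega}(u_0)$, so Theorem \ref{thm:GD}(i) produces a globally dissipative Cauchy solution $\tilde u$ with $\tilde u(0) = \tilde u_0$. The comparison lemma furnishes the pointwise bound $|u(t,x)| \le |\tilde u(t,x)|$ on $[0, T_m(u_0)) \times \Omega$, hence
\[
    \|u(t)\|_{L^{q_c}(\Omega)} \le \|\tilde u(t)\|_{L^{q_c}(\R^d)} \longrightarrow 0 \quad \text{as } t \to \infty.
\]
This prevents finite-time blow-up by the Dirichlet analogue of the blow-up criterion in Proposition \ref{prop:wellposed1}(iv), so $T_m(u_0) = +\infty$. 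To upgrade $L^{q_c}$-decay to $\dot H^1$-decay, pick $t_0$ large so that $\|u(t_0)\|_{L^{q_c}(\Omega)}$ lies below the small-data threshold, restart the Dirichlet flow there, and apply the Dirichlet analogue of the small-data dissipation result in Proposition \ref{prop:wellposed1}(v),(vi).

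\textbf{Part (ii).} For the blow-up part, the cleanest route is a direct application of Levine's concavity method on $\Omega$, which is simpler than the spatially-localized version of Subsection \ref{sub:3.2} because $u_0 \in H^1_0(\Omega) \subset L^2(\Omega)$ obviates the cut-off $\chi_R$. Set $I(t) := \int_0^t \|u(s)\|_{L^2(\Omega)}^2\, ds + A$, so that $I'(t) = \|u(t)\|_{L^2(\Omega)}^2$ and (after justifying the pairing $(u,\partial_t u)_{L^2(\Omega)}$ as in Lemma \ref{nehari2}) $I''(t) = -2 J_{\gamma,\Omega}(u(t))$. The Dirichlet analogue of Lemma \ref{lem:invariant}(iii), which follows from the same variational argument using $l_{HS}(\Omega) = l_{HS}$, gives $J_{\gamma,\Omega}(u(t)) < -2^*(\gamma)\{l_{HS} - E_{\gamma,\Omega}(u(t))\}$; combined with the energy identity on $\Omega$ and Cauchy-Schwarz on $I'(t)^2$, one chooses $A$ large and $\alpha > 0$ small to obtain the concavity inequality $I''(t) I(t) - (1+\alpha) I'(t)^2 > 0$, forcing $T_m(u_0) < +\infty$ by the standard ODE argument.

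\textbf{Main obstacle.} The genuinely delicate step is Part (i): while the pointwise domination from Lemma \ref{lem:comparison} immediately yields $L^{q_c}$-decay, converting this to $\dot H^1$-dissipation and ruling out finite-time blow-up requires enough local Dirichlet well-posedness theory on $\Omega$ (analogues of the blow-up criterion and small-data dissipation) to mirror Proposition \ref{prop:wellposed1}. Everything else is a routine transfer from $\R^d$ to $\Omega$, made possible by the domain-independence of the sharp Hardy-Sobolev constant.
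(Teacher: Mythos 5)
Your proposal follows essentially the same route as the paper: zero-extend the data, invoke Theorem \ref{thm:GD} on $\R^d$, transfer the bound back to $\Omega$ via Lemma \ref{lem:comparison}, and run the (unlocalized) concavity method for part (ii), where $u_0\in H^1_0(\Omega)\subset L^2(\Omega)$ indeed makes the cut-off $\chi_R$ of Subsection \ref{sub:3.2} unnecessary. Two small points where your part (i) needs tightening. First, Lemma \ref{lem:comparison} is stated under the hypothesis $|u_{0,1}|\le u_{0,2}$ a.e.\ on $\Omega_1$, which forces the dominating datum to be real and nonnegative, and its conclusion is $|u_1|\le u_2$, not $|u_1|\le|u_2|$ for an arbitrary (possibly sign-changing or complex) majorant. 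So you cannot compare $u$ with the Cauchy solution emanating from $\tilde u_0$ itself; you must take the Cauchy solution $\tilde u$ with initial datum $|\tilde u_0|$, noting as the paper does that $E_\gamma(|\tilde u_0|)=E_{\gamma,\Omega}(u_0)$ and $J_\gamma(|\tilde u_0|)=J_{\gamma,\Omega}(u_0)$, so that Theorem \ref{thm:GD}(i) still applies to $|\tilde u_0|$. Second, the blow-up criterion in Proposition \ref{prop:wellposed-A}(iv) is phrased in terms of the $\mathcal K^q$-norm, not the $L^{q_c}$-norm, so the quantity you should extract from the pointwise domination is $\|u\|_{\mathcal K^q(T,\Omega)}\le\|\tilde u\|_{\mathcal K^q}<\infty$; this simultaneously rules out finite-time blow-up and, via Proposition \ref{prop:diss-A} ((a)$\Rightarrow$(b)), yields dissipation in one step, making your restart-at-small-data argument unnecessary (though that argument is also workable once global existence is secured). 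Part (ii) matches the paper's intended (omitted) proof.
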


\begin{rem}
Theorem \ref{thm:GD-D} is a generalization of the results by Tan \cite{Tan2001}, which deals with 
the case of $\gamma=0$ and bounded domains $\Omega$. 
\end{rem}

\begin{proof}
Let $u_0\in H^1_0(\Omega)$ with $E_{\gamma,\Omega}(u_0) \le l_{HS}$ and $J_{\gamma,\Omega}(u_0)>0$ 
and $\tilde{u_0}$ be the zero extension of $u_0$ to $\R^d$.  
Then, it is clear that $|\tilde{u_0}| \in H^1(\R^d)$, $E_{\gamma,\Omega}(u_0) = E_{\gamma}(|\tilde{u_0}|)$, and $J_{\gamma,\Omega}(u_0) = J_{\gamma}(|\tilde{u_0}|)$.
Denoting by $\tilde{u}$ a mild solution to the Cauchy problem \eqref{crtHS} with initial data $|\tilde{u}_0|$, 
we have $\|\tilde{u}\|_{\mathcal K^q} < \infty$ by (i) in Theorem \ref{thm:GD}. 
By Lemma~\ref{lem:comparison}, we see that 
$\|u\|_{\mathcal K^q(\Omega)} \le 
\|\tilde{u}\|_{\mathcal K^q} < \infty$. 
Hence (i) in Theorem \ref{thm:GD-D} is proved by Proposition~\ref{prop:diss-A}. 
The statement (ii) can be proved similarly as in Subsection \ref{sub:3.2}, as the same variational results also hold in this case (see, e.g., \cite{IT-arxiv}).
Thus, we omit the proof.
\end{proof}

\appendix

\section{Local theory and dissipation of global solutions}\label{app:A}
Let $\Omega$ be a domain of $\R^d$.  
We study the Dirichlet problem of nonlinear heat equation
\begin{equation}\label{crtHS-Dg}
\begin{cases}
\partial_t u - \Delta u = F(x,u),&(t,x)\in (0,T)\times\Omega, \\
u|_{\partial \Omega} =0, \\
u(0) = u_0,
\end{cases}
\end{equation}
where $F : \Omega\times \mathbb C\to\mathbb C$. 
We rewrite the problem \eqref{crtHS-Dg} as the integral form
\begin{equation}\label{integral-eq}
u(t,x) = (e^{t\Delta_\Omega}u_0)(x) + \int_{0}^t e^{(t-\tau)\Delta_\Omega}F(\cdot,u(\tau,\cdot))(x)\, d\tau
\end{equation}
for any $t \in [0,T)$ and almost everywhere $x\in\Omega$, 
where $-\Delta_\Omega$ is the Dirichlet Laplacian on $L^2(\Omega)$ and 
$\{e^{t\Delta_\Omega}\}_{t>0}$ is the semigroup generated by $-\Delta_\Omega$. 
We regard $\mathbb C$ as the two-dimensional vector space $\R^2$, and assume that 
$F(x,\cdot) \in C^1 (\mathbb R^2; \mathbb R^2)$ with $F(x,0) = 0$ and 
\begin{equation}\label{assum-F}
	|F(x,z_1)-F(x,z_2)| \le C|x|^{-\gamma}(|z_1| + |z_2|)^{2^*(\gamma)-2}|z_1-z_2|
\end{equation}
for almost everywhere $x\in\Omega$ and any $z_1,z_2\in \mathbb C$. 
In this appendix, we discuss the local well-posedness, small-data global existence, and dissipation of global solutions for \eqref{crtHS-Dg} in the critical spaces $L^{q_c}(\Omega)$, $H^1(\Delta_\Omega)$ and $\dot H^1(\Delta_\Omega)$. 
Here, $H^1(\Delta_\Omega)$ and $\dot H^1(\Delta_\Omega)$ are Sobolev spaces associated with $-\Delta_\Omega$ and their norms are given by 
\[
\|f\|_{H^1(\Delta_\Omega)} := \|(I-\Delta_\Omega)^\frac12 f\|_{L^2(\Omega)}\quad\text{and}\quad 
\|f\|_{\dot H^1(\Delta_\Omega)} := \|(-\Delta_\Omega)^\frac12 f\|_{L^2(\Omega)}
\]
via the spectral decomposition of $-\Delta_\Omega$, 
respectively, where $I$ is the identity operator on $L^2(\Omega)$. For these precise definitions, we refer to Definition 1.1 in \cite{IT-arxiv} for instance. 
Note that $H^1(\Delta_\Omega) = H^1_0(\Omega)$, $H^1(\Delta_{\R^d}) = H^1(\R^d)$, and $\dot H^1(\Delta_{\R^d}) = \dot H^1(\R^d)$. 
For convenience, we set 
\[
X = L^{q_c}(\Omega), H^1(\Delta_\Omega)\text{ or } \dot H^1(\Delta_\Omega). 
\]
To state the result on well-posedness, let us introduce the notion of a mild solution.
\begin{definition}\label{def:sol-A}
Let $T \in (0,\infty]$ and $u_0 \in X$. 
A function $u : [0,T) \times \R^d \to \mathbb C$ is called an $X$-mild solution to \eqref{crtHS-Dg} with initial data $u(0)=u_0$ if it satisfies $u\in C([0,T); X)$ and the integral equation \eqref{integral-eq} 
for any $t \in [0,T)$ and almost everywhere $x \in \R^d$.
The time $T$ is said to be the maximal existence time, which is denoted by $T_m=T_m(u_0)$, if the solution cannot be extended beyond $[0,T).$  
We say that $u$ is global in time if $T_m = + \infty$ 
and that $u$ blows up in finite time otherwise. 
Moreover, we say that $u$ is dissipative if $T_m = + \infty$ and 
\[
\lim_{t\to\infty} \|u(t)\|_{X} = 0,
\]
and that $u$ is stationary if $u(t,x) = \phi(x)$ on $[0,\infty)\times\Omega$, where $\phi$ is a solution of the elliptic equation $ - \Delta \phi = F(x,\phi)$. 
\end{definition}

We introduce an auxiliary space of the following type.

\begin{definition}\label{def:Kato-m}
Let $T \in (0,\infty]$ and $q,r\in [1,\infty]$. The space $\mathcal{K}^{q}_r(T,\Omega)$ is defined by 
\[
   \mathcal{K}^{q}_r(T,\Omega):=\left\{u\in \mathscr{D}'([0,T)\times\Omega)\ ;\ \|u\|_{\mathcal{K}^{q}_r(T',\Omega)}
   <\infty\ \text{for any }T' \in (0,T)\right\}
\]
endowed with 
\[
	\|u\|_{\mathcal K^{q}_r(T,\Omega)}
	:= 
	\left(
		\int_0^T (t^{\kappa} \|u(t)\|_{L^q(\Omega)})^r \, dt
	\right)^\frac{1}{r}
\]
if $r<\infty$, and 
\[
\|u\|_{\mathcal K^{q}_{\infty}(T,\Omega)} = \|u\|_{\mathcal K^{q}(T,\Omega)}
	:=\sup_{0\le t\le T}t^{\kappa}\|u\|_{L^q(\Omega)},
\]
where $\kappa$ is given by 
\begin{equation}\label{kappa}
\kappa = \kappa(q,r) := \frac{d}{2} \left( \frac{1}{q_c} - \frac{1}{q}\right) - \frac{1}{r},
\end{equation}
and 
$ \mathscr{D}'([0,T)\times\Omega)$ is the space of distributions on $[0,T)\times\Omega$. 
For simplicity, we may omit 
the symbols $T$ and $\Omega$ in $\mathcal K^{q,\alpha}_r(T,\Omega)$ when $T=\infty$ and $\Omega=\mathbb R^d$, respectively, if they do not cause a confusion.
For example, 
$\mathcal K^{q,\alpha}_r = \mathcal K^{q,\alpha}_r(\infty,\mathbb R^d)$. 
\end{definition}

The auxiliary norm is invariant under the scaling transformation \eqref{crt.HSscale} when $T = \infty$ and $\Omega = \mathbb R^d$. 
Indeed, $\|u_\lambda\|_{\mathcal K^q_r} = \|u\|_{\mathcal K^q_r}$ for any $\lambda>0$ 
if and only if \eqref{kappa} holds.

\medskip

Throughout this appendix, we make the following assumptions on $q$ and $r$. 
We assume that $q\in (1,\infty)$ satisfies 
\begin{equation}\label{l:crtHS.nonlin.est:c}
	\frac{1}{q_c} - \frac{2}{d(2^*(\gamma)-1)}
	< \frac{1}{q} < \frac{1}{q_c} \quad \text{if $X=L^{q_c}(\Omega)$},
\end{equation} 
and
\begin{equation}\label{l:crtHS.nonlin.est:c2'}
	\frac{1}{q_c} - \frac{1}{d(2^*(\gamma)-1)}
	< \frac{1}{q} < \frac{1}{q_c}
	\quad \text{if $X=H^1(\Delta_\Omega)$ or $\dot H^1(\Delta_\Omega)$}
\end{equation} 
In addition, we assume $r \in [1,\infty]$ satisfies 
\begin{equation}\label{t:HS.mK.WP.H1:cr}
	0\le  \frac1{r}  < \frac{d}2 \left(\frac1{q_c} - \frac1{q} \right). 
\end{equation}
Here, we note that \eqref{l:crtHS.nonlin.est:c2'} and \eqref{t:HS.mK.WP.H1:cr} are the same as \eqref{l:crtHS.nonlin.est:c2} \eqref{condi-new} in Proposition \ref{prop:wellposed1}, respectively.

\medskip

Then we have the following:

\begin{prop}\label{prop:wellposed-A}
Let $d\ge3$ and $0\le \gamma< 2$. Then the following statements hold{\rm :}
\begin{itemize}
\item[(i)] {\rm (}Existence{\rm )} 
For any $u_0 \in X$, there exists a maximal existence time $T_m=T_m(u_0)\in (0,\infty]$ such that 
there exists a unique mild solution 
\[
u\in C([0,T_m); X)\cap \mathcal K^q_r(T_m,\Omega)
\]
to \eqref{crtHS-Dg} with $u(0)=u_0$.
Moreover, we also have 
\[
u \in K^{\tilde{q}}_{\tilde{r}}(T_m,\Omega)
\]
for any $(\tilde{q},\tilde{r}) \in [1,\infty]^2$ satisfying \eqref{l:crtHS.nonlin.est:c}, \eqref{l:crtHS.nonlin.est:c2'} and \eqref{t:HS.mK.WP.H1:cr}.

\item[(ii)] 
{\rm (}Uniqueness in $\mathcal{K}^{q}_r(T,\Omega)${\rm )} 
Let $T>0.$ If $u_1, u_2 \in \mathcal{K}^{q}_r(T,\Omega)$ 
satisfy the integral equation \eqref{integral-eq} with $u_1(0)=u_2(0)=u_0$,  
then $u_1=u_2$ on $[0,T].$ 

\item[(iii)] {\rm (}Continuous dependence on initial data{\rm )} 
The map $T_m : X \to (0,\infty]$ is 
lower semicontinuous. Furthermore, for any $u_0, v_0 \in X$ and for any $T < \min\{T_m(u_0),T_m(v_0)\}$, there exists a constant $C>0$, depending on $\|u_0\|_{X}$, $\|v_0\|_{X}$, and $T$, such that
\[
\sup_{t \in [0,T]} \|u(t) - v(t)\|_{X} 
+
\|u-v\|_{\mathcal K^q_r(T,\Omega)} \le C \|u_0 - v_0\|_{X}.
\]

\item[(iv)] {\rm (}Blow-up criterion{\rm )} 
If $T_m < + \infty,$ then $\|u\|_{\mathcal{K}^q_r(T_m,\Omega)} = \infty.$ 

\item[(v)] {\rm (}Small-data global existence and dissipation{\rm )} 
There exists $\rho >0$ such that 
if $u_0 \in X$ satisfies
\begin{equation}\nonumber
\|e^{t\Delta_\Omega} u_0\|_{\mathcal{K}^{q}_r(\Omega)}
	\le \rho,
\end{equation}
then $T_m=+\infty$ and 
\[
\|u\|_{\mathcal{K}^{q}_r(\Omega)} \le 2\rho\quad \text{and}\quad \lim_{t\to\infty}\|u(t)\|_{X} = 0.
\]

\item[(vi)] Let $d=3$, $r=\infty$ and $X=H^1(\Delta_\Omega)$ or $\dot H^1(\Delta_\Omega)$. Suppose additionally that $q$ satisfies \eqref{l:crtHS.nonlin.est:c3}, i.e.,
\[
	\frac{1}{q_c} - \frac{1}{12(2-\gamma)}
	< \frac{1}{q}. 
\]
Then, for any $u_0 \in X$, there exists a maximal existence time 
$T_m=T_m(u_0)\in (0,\infty]$ such that there exists a unique mild solution 
\[
	u \in C([0,T_m); X)\cap \mathcal K^q(T_m,\Omega)\quad\text{and}\quad 
	\partial_t u \in \mathcal K^{3,1}(T_m,\Omega)
\]
to \eqref{crtHS-Dg} with $u(0)=u_0$. Furthermore, the solution $u$ satisfies 
\[
\partial_t u \in \mathcal K^{2,1}(T_m,\Omega).
\]
Here, the space $\mathcal{K}^{q,\alpha}(T,\Omega)$ is defined by 
\[
   \mathcal{K}^{q,\alpha}(T,\Omega):=\left\{u\in \mathscr{D}'([0,T)\times\Omega)\ ;\, \|u\|_{\mathcal{K}^{q,\alpha}(T',\Omega)}
   <\infty\ \text{for any }T' \in (0,T)\right\}
\]
endowed with 
\[
\|u\|_{\mathcal K^{q,\alpha}(T,\Omega)}
	:=\sup_{0\le t\le T}t^{\frac{d}{2}(\frac{1}{q_c}-\frac{1}{q})+\alpha}\|u\|_{L^q(\Omega)}.
\]
\end{itemize}
\end{prop}

Our task is now reduced to applying the fixed point theorem 
to the map defined by 
\begin{equation}\label{map}
	\Phi_{u_0}[u] = \Phi_{u_0}[u](t) := e^{t\Delta_\Omega} u_0 + N(u),
\end{equation}
where 
\begin{equation}	\label{mapN}
\begin{aligned}
	 N(u)(t) := \int_0^t e^{(t-\tau)\Delta_\Omega} 
		\{F(\cdot, u(\tau)) \}\, d\tau.
\end{aligned}
\end{equation}
For this purpose, let us prepare some estimates for $e^{t\Delta_\Omega}$ and $N(u)$. We recall the pointwise estimates for its integral kernel $G_\Omega(t,x,y)$:
\begin{equation}\label{Gaussian}
0\le G_\Omega(t,x,y) \le (4\pi t)^{-\frac{d}{2}} \exp\left(-\frac{|x-y|^2}{4t}\right),\quad t>0,\ \text{a.e.}\, x,y\in \Omega
\end{equation}
(see, e.g., Sections 2.2, 2.3 and 6.3 in Ouhabaz \cite{Ouh2005}). 
By combining this pointwise estimate with Lemma \ref{l:lin.est.HS}, 
we have linear estimates for $\{e^{t\Delta_\Omega}\}_{t>0}$. 
\begin{lem}\label{lem:smooth-dirichlet}
Let $d\ge1$, $0 < \gamma < d$ and $s\ge0$. Then, the following statements hold{\rm :}
\begin{itemize}
\item[(i)] For any $1\le p_1 \le p_2 \le \infty$,  
there exists $C>0$ such that 
\[
\| (-\Delta_\Omega)^\frac{s}{2} e^{t\Delta_\Omega} f\|_{L^{p_2}(\Omega)} 
\le C  t^{-\frac{d}{2}(\frac{1}{p_1}-\frac{1}{p_2})-\frac{s}{2}} 
	\| f\|_{L^{p_1}(\Omega)}
\]
for any $t>0$ and $f \in L^{p_1}(\Omega)$. 
\item[(ii)] 
Suppose 
\[
0 \le \frac{1}{p_2} < \frac{\gamma}{d} + \frac{1}{p_1} < 1.
\]
Then, 
there exists $C>0$ such that 
\[
\|(-\Delta_\Omega)^\frac{s}{2} e^{t\Delta_\Omega} (|\cdot|^{-\gamma} f) \|_{L^{p_2}(\Omega)} 
    \le C t^{-\frac{d}{2}(\frac{1}{p_1}-\frac{1}{p_2})-\frac{s+\gamma}{2}} 
	\|f\|_{L^{p_1}(\Omega)}
\]
for any $t>0$ and $f \in L^{p_1}(\Omega)$.
\end{itemize}
\end{lem}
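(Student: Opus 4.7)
The strategy is to transfer the $\R^d$-estimates of Lemma~\ref{l:lin.est.HS} to the Dirichlet setting via the pointwise Gaussian upper bound \eqref{Gaussian}, and then upgrade from $s=0$ to $s>0$ by an analytic-semigroup factorization.

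First I would dispose of the case $s=0$. For any $f\in L^{p_1}(\Omega)$, write $\tilde f\in L^{p_1}(\R^d)$ for the zero extension of $|f|$ from $\Omega$ to $\R^d$. The pointwise bound \eqref{Gaussian} gives
\[
	|(e^{t\Delta_\Omega}f)(x)|
	\le \int_\Omega G_\Omega(t,x,y)|f(y)|\,dy
	\le (e^{t\Delta}\tilde f)(x)\quad\text{for a.e. } x\in\Omega,
\]
and, identically, $|(e^{t\Delta_\Omega}(|\cdot|^{-\gamma}f))(x)|\le (e^{t\Delta}(|\cdot|^{-\gamma}\tilde f))(x)$ for a.e. $x\in\Omega$ (the latter is legitimate because $|\cdot|^{-\gamma}\tilde f$ vanishes outside $\Omega$). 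Taking $L^{p_2}(\Omega)$-norms of both sides and extending to all of $\R^d$, parts (i) and (ii) of Lemma~\ref{l:lin.est.HS} (with $|\alpha|=0$) immediately yield assertions (i) and (ii) for $s=0$.

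For $s>0$ I would use the fact that $\{e^{t\Delta_\Omega}\}_{t>0}$ is a bounded analytic semigroup on $L^{p_2}(\Omega)$ for $1<p_2<\infty$ (a consequence of the Gaussian upper bound \eqref{Gaussian} via Stein's theorem on symmetric sub-Markovian semigroups; see \cite{Ouh2005}). Standard functional calculus for sectorial operators then gives
\[
	\|(-\Delta_\Omega)^{s/2} e^{\tau\Delta_\Omega}\|_{L^{p_2}(\Omega)\to L^{p_2}(\Omega)} \le C\tau^{-s/2},\quad \tau>0.
\]
Factoring $(-\Delta_\Omega)^{s/2} e^{t\Delta_\Omega}=[(-\Delta_\Omega)^{s/2} e^{(t/2)\Delta_\Omega}]\circ e^{(t/2)\Delta_\Omega}$ and composing the displayed operator bound with the $s=0$ estimate applied to $e^{(t/2)\Delta_\Omega}$ produces the full inequality, in both (i) and (ii), with the expected power $t^{-\frac{d}{2}(1/p_1-1/p_2)-s/2}$ or $t^{-\frac{d}{2}(1/p_1-1/p_2)-(s+\gamma)/2}$.

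The main obstacle is the operator-norm bound for $(-\Delta_\Omega)^{s/2} e^{\tau\Delta_\Omega}$ in the second step at the endpoints $p_2\in\{1,\infty\}$, where the analytic semigroup property on $L^{p_2}(\Omega)$ is delicate; for the applications in this paper only exponents $p_2\in(1,\infty)$ are used, so one may either restrict to that range or interpolate. A secondary technical point is ensuring that $(-\Delta_\Omega)^{s/2}$ is indeed defined on the image of $e^{(t/2)\Delta_\Omega}$ as an $L^{p_2}$-valued operator, which again follows from the analyticity of the semigroup.
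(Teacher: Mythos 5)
Your proof is correct and follows essentially the same route as the paper: factor $(-\Delta_\Omega)^{s/2}e^{t\Delta_\Omega}=\bigl[(-\Delta_\Omega)^{s/2}e^{(t/2)\Delta_\Omega}\bigr]\circ e^{(t/2)\Delta_\Omega}$, use the analytic-semigroup bound $C\tau^{-s/2}$ for the first factor, and dominate the second via the Gaussian upper bound \eqref{Gaussian} by the whole-space semigroup so that Lemma~\ref{l:lin.est.HS} applies to the zero extension of $|f|$. Your explicit remark on the endpoint exponents $p_2\in\{1,\infty\}$ is a point the paper passes over silently, so no gap relative to the paper's own argument.
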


\begin{proof}
We prove only the statement (ii), as the proof of (i) is simpler. 
By the property of semigroup and \eqref{Gaussian}, 
we have 
\[
\begin{split}
\|(-\Delta_\Omega)^\frac{s}{2} e^{t\Delta_\Omega} (|\cdot|^{-\gamma} f) \|_{L^{p_2}(\Omega)} 
& \le C t^{-\frac{s}{2}} \|e^{\frac{t}{2}\Delta_\Omega} (|\cdot|^{-\gamma} f) \|_{L^{p_2}(\Omega)} \\
& \le C t^{-\frac{s}{2}} \|e^{\frac{t}{2}\Delta} (|\cdot|^{-\gamma} |\tilde{f}|) \|_{L^{p_2}(\R^d)},
\end{split}
\]
where $\tilde{f}$ is the zero extension of $f$ to $\R^d$. 
By \eqref{decayest2} in Lemma \ref{l:lin.est.HS}, we estimate
\[
\begin{split}
\|e^{\frac{t}{2}\Delta} (|\cdot|^{-\gamma} |\tilde{f}|) \|_{L^{p_2}(\R^d)}
& \le 
C t^{-\frac{d}{2}(\frac{1}{p_1}-\frac{1}{p_2})-\frac{\gamma}{2}} \|\tilde{f}\|_{L^{p_1}(\R^d)}\\
& =
C t^{-\frac{d}{2}(\frac{1}{p_1}-\frac{1}{p_2})-\frac{\gamma}{2}} \|f\|_{L^{p_1}(\Omega)}.
\end{split}
\]
Summarizing the above two estimates, we obtain (ii). 
\end{proof}

\begin{lem}
\label{l:crtHS.nonlin.est}
Let $d\ge3$, $0\le\gamma< 2$, and $T>0$. 
Then, the following statements hold{\rm :}
\begin{itemize}
\item[(i)] 
Assume $(q,r)$ satisfies \eqref{l:crtHS.nonlin.est:c} and \eqref{t:HS.mK.WP.H1:cr}. Then, 
there exists a positive constant $C_1$ depending only on 
$d,$ $\gamma$, and $q$ such that 
\begin{equation}\label{lemA.5-1}
\begin{split}
&\| N(u) - N(v)\|_{\mathcal{K}^{q}_r(T,\Omega)\cap L^{\infty}([0,T];L^{q_c}(\Omega))} \\
&\le C_1 \max\{\|u\|_{\mathcal{K}^{q}_r(T,\Omega)},\|v\|_{\mathcal{K}^{q}_r(T,\Omega)} \}^{2^*(\gamma)-2} 
 \|u-v\|_{\mathcal{K}^{q}_r(T,\Omega)}
\end{split}
\end{equation}
holds for any $u, v\in \mathcal{K}^{q}_r(T,\Omega).$ 
\item[(ii)] 
Assume $(q,r)$ satisfies \eqref{l:crtHS.nonlin.est:c2'} and \eqref{t:HS.mK.WP.H1:cr}. 
Then, there exists a positive constant $C_2$ depending only on 
$d,$ $\gamma$, and $q$ such that 
\begin{equation}\label{lemA.5-2}
\begin{split}
& \| N(u) - N(v) \|_{L^{\infty}([0,T];\dot{H}^1(-\Delta_\Omega)) \cap \mathcal{K}^{2}(T,\Omega)} \\
&\le C_2 \max\{\|u\|_{\mathcal{K}^{q}_r(T,\Omega)},\|v\|_{\mathcal{K}^{q}_r(T,\Omega)} \}^{2^*(\gamma)-2} 
 \|u-v\|_{\mathcal{K}^{q}_r(T,\Omega)}
\end{split}
\end{equation}
holds for any $u, v\in \mathcal{K}^{q}_r(T,\Omega).$ 
\item[(iii)] 
Assume $q$ satisfies \eqref{l:crtHS.nonlin.est:c}, $r=\infty$, and 
\begin{equation}\label{l:crtHS.nonlin.est:c4}
\frac{1}{q_c} - \frac{4-d}{2d(2^*(\gamma)-2)}
< \frac{1}{q}.
\end{equation}
Then, there exists a positive constant $C_3$ depending only on 
$d,$ $\gamma$, and $q$ such that 
\begin{equation}\label{nonlin.est3}
\| \partial_tN(u)\|_{\mathcal K^{d,1}(T,\Omega)\cap \mathcal K^{2,1}(T,\Omega)} 
\le C_3 \left(\|u_0\|_{L^{q_c}(\Omega)}^{2^*(\gamma)-1} + \|u\|_{\mathcal{K}^{q}(T,\Omega)}^{2^*(\gamma)-2}
 \|\partial_tu\|_{\mathcal{K}^{d,1}(T,\Omega)} \right)
\end{equation}
holds for any $u_0 \in L^{q_c}(\Omega)$ and for any $u\in \mathcal{K}^{q}(T,\Omega)$ satisfying the integral equation \eqref{integral-eq} and $\partial_tu\in \mathcal{K}^{d,1}(T,\Omega)$.
\end{itemize}
\end{lem}

\begin{rem}\label{rem:restriction-q}
Note that the statement (iii) in Lemma \ref{l:crtHS.nonlin.est} holds only if $d=3$,
as it is possible to take $q$ satisfying both \eqref{l:crtHS.nonlin.est:c} and \eqref{l:crtHS.nonlin.est:c4} only if $d=3$.
This statement (iii) is a key tool in the proof of (vi) in Proposition \ref{prop:wellposed-A}, and \eqref{l:crtHS.nonlin.est:c4} yields the additional assumption \eqref{l:crtHS.nonlin.est:c3} of (vii) in Proposition \ref{prop:wellposed1}.
\end{rem}

To show Lemma \ref{l:crtHS.nonlin.est}, we use the 
Hardy-Littlewood-Sobolev inequality. 
In \cite{SteWei1958}, Stein and Weiss extend the doubled-weighted 
Hardy-Littlewood-Sobolev inequality to general dimension. 
The endpoint cases $(p,q) = \{1,\infty\}$ are due to \cite{Str1969}. 
Their result reads as follows. 
\begin{lem}[\cite{SteWei1958}, \cite{Str1969}]
\label{l:SW}
Let $0<\lambda<d,$ $1\le p\le q \le \infty,$ $\alpha < d/p',$ $\beta<d/q,$ 
$\alpha+\beta\ge 0$ and $1/q = 1/p + (\lambda + \alpha + \beta)/d -1,$ 
except if $\alpha+\beta=0$ we require $1< p\le q < \infty.$ 
Define the operator $S_{\lambda, \alpha, \beta}(f)$ by 
\begin{equation}\nonumber
	S_{\lambda, \alpha, \beta} (f) 
		= \int_{\R^d} \frac{f(y)}{|x|^{\beta} |x-y|^{\lambda} |y|^{\alpha}} dy.
\end{equation}
Then there exists a constant $C>0$ such that 
\begin{equation}\nonumber
	\|S_{\lambda, \alpha, \beta} (f) \|_{L^q} \le C \|f\|_{L^p}
\end{equation}
for any $f\in L^p(\R^d).$
\end{lem}

\begin{proof}[Proof of Lemma \ref{l:crtHS.nonlin.est}]
First, we prove the assertion (i). We assume $v=0$ for simplicity. 
The proof of (i) with $r=\infty$ can be found in the proof of Theorem 1.1 in \cite{BenTayWei2017}.
Let $r<\infty$. Then, by (ii) in Lemma \ref{lem:smooth-dirichlet}, we estimate
\[
\begin{split}
	\|N(u)(t)\|_{L^q} 
	& \le \int_0^t \|e^{(t-\tau)\Delta_\Omega} \{ |\cdot|^{-\gamma} |u(\tau)|^{2^*(\gamma)-1} \} \|_{L^q} \, d\tau \\
	&\le C \int_0^t (t-\tau)^{-\frac{d(2^*(\gamma)-2)}{2q}- \frac{\gamma}2} 
		\|  |u(\tau)|^{2^*(\gamma)-1} \|_{L^\frac{q}{2^*(\gamma)-1}} \, d\tau \\
	&= C  \int_0^t (t-\tau)^{-\frac{d(2^*(\gamma)-2)}{2q} - \frac{\gamma}2} 
		\tau^{-(2^*(\gamma)-1) \kappa}
		\left( \tau^{(2^*(\gamma)-1)\kappa}\| u(\tau) \|_{L^q}^{2^*(\gamma)-1}
			 \right) \, d\tau,
\end{split}
\]
provided that 
\begin{equation}\nonumber
	0 \le \frac{1}{q} < \frac{\gamma}{d} + \frac{2^*(\gamma) - 1}{q} < 1,
\end{equation}
i.e., 
\[
0 \le \frac1{q} < \frac{d-\gamma}{d(2^*(\gamma) - 1)}.
\]
Let $\tilde f(\tau) := (  \tau^{(2^*(\gamma)-1)\kappa}\| u(\tau) \|_{L^q}^{2^*(\gamma)-1} ) \chi_{[0,T]}(\tau)$ 
with $\chi_{[0,T]}$ the indicator function on $[0,T],$ so that 
\begin{equation}\nonumber
	\|N(u)\|_{\mathcal{K}^q_r(T)} 
	\le C \left\| \int_{\R} t^{\kappa} 
		(t-\tau)^{-\frac{d(2^*(\gamma)-2)}{2q} - \frac{\gamma}2} 
		\tau^{-(2^*(\gamma)-1) \kappa}
		\tilde f(\tau) \, d\tau \right\|_{L^r(\R)}.
\end{equation}
Hence, Lemma \ref{l:SW} with 
\[
\text{$d= 1,$ 
$\lambda = \frac{d(2^*(\gamma)-2)}{2q} +\frac{\gamma}2,$ 
$\beta = - \kappa,$ $\alpha = (2^*(\gamma) - 1)\kappa,$ 
$q = r,$ $p = \frac{r}{2^*(\gamma)-1}$}
\] 
yields 
\[
\|N(u)\|_{\mathcal{K}^q_r(T)} 
\le C \|\tilde f\|_{L^{\frac{r}{2^*(\gamma)-1}}(\mathbb R)}
= C\|u\|_{\mathcal{K}^q_r(T)}^{2^*(\gamma)-1},
\]
provided that 
\begin{equation}\nonumber
\begin{aligned}
&0 < \frac{d(2^*(\gamma)-2)}{2q} + \frac{\gamma}{2} < 1, \quad
	1\le \frac{r}{2^*(\gamma)-1} \le r \le \infty, \\
&(2^*(\gamma)-1)\kappa < 1 - \frac{2^*(\gamma)-1}{r}, \quad
	- \kappa < \frac1{r}, \quad 
	(2^*(\gamma)-2) \kappa \ge 0, \\
&\frac{1}{r} = \frac{2^*(\gamma)-1}{r} 
	+ \frac{d(2^*(\gamma)-2)}{2q} + \frac{\gamma}{2} 
		+ (2^*(\gamma)-2) \kappa - 1,
\end{aligned}
\end{equation}
where 
\begin{equation}\nonumber
	1< \frac{r}{2^*(\gamma)-1} \le r < \infty
		\quad\text{if }
	(2^*(\gamma)-2) \kappa = 0.
\end{equation}
The above conditions amount to \eqref{l:crtHS.nonlin.est:c} and \eqref{t:HS.mK.WP.H1:cr}. 
Thus, the assertion (i) is proved.

The proof of (ii) is similar to that of (i), so we may omit the proof. 
Finally, we give a proof of (iii).
By making the change $\tau'=t-\tau$, 
we write 
\[
\begin{split}
\partial_t \int_0^t e^{(t-\tau)\Delta_\Omega} 
	 F(x, u(\tau))\, d\tau 
=
e^{t\Delta_\Omega}F(x, u_0)
+
\int_0^t e^{\tau'\Delta_\Omega} 
	  \partial_t F(x, u(t-\tau'))\, d\tau'.
\end{split}
\]
Note from the assumption \eqref{assum-F} on $F$ and (ii) in Lemma \ref{lem:smooth-dirichlet} that 
\begin{equation}\label{decayest-gene}
\|(-\Delta_\Omega)^\frac{s}{2} e^{t\Delta_\Omega} (|\cdot|^{-\gamma} F(x,u)) \|_{L^{p_2}(\Omega)} 
    \le C t^{-\frac{d}{2}(\frac{1}{p_1}-\frac{1}{p_2})-\frac{s+\gamma}{2}} 
	\||u|^{2^*(\gamma)-1}\|_{L^{p_1}(\Omega)}
\end{equation}
for any $p_1,p_2,\gamma$ satisfying
\[
0 < \gamma < d,\quad 0 \le \frac{1}{p_2} < \frac{\gamma}{d} + \frac{1}{p_1} < 1,
\]
as it follows from the positivity in \eqref{Gaussian} and the assumption \eqref{assum-F} on $F$ that 
\[
|e^{t\Delta_\Omega} F(x,u)| \le e^{t\Delta_\Omega} (|x|^{-\gamma}|u|^{2^*(\gamma)-1}).
\]
From \eqref{decayest-gene}, the first term is estimated as follows:
\[
\begin{split}
\left\|e^{t\Delta_\Omega}F(\cdot, u_0)\right\|_{L^d(\Omega)}
& \le C
t^{-\frac{d}{2} (\frac{2^*(\gamma)-1}{q_c} - \frac{1}{d})- \frac{\gamma}{2}} \||u_0|^{2^*(\gamma)-1}\|_{L^{\frac{q_c}{2^*(\gamma)-1}}(\Omega)}\\
& = C
t^{-\frac{d}{2} (\frac{1}{q_c} - \frac{1}{d})-1}
\|u_0\|_{L^{q_c}(\Omega)}^{2^*(\gamma)-1}.
\end{split}
\]
As to the second term, again using \eqref{decayest-gene}, noting from the assumptions $F(x,\cdot) \in C^1 (\mathbb R^2; \mathbb R^2)$ and \eqref{assum-F} on $F$ that 
\[
\begin{split}
|\partial_t F(x, u(t-\tau'))| & = |\partial_z F(x, u(t-\tau')) \partial_t u(t-\tau')|\\
& \le C|x|^{-\gamma}|u(t-\tau')|^{2^*(\gamma)-2} |\partial_t u(t-\tau')|,
\end{split}
\]
and then applying H\"older's inequality, 
we estimate
\begin{equation}\label{eq1-second}
\begin{split}
\Big\|
\int_0^t e^{\tau'\Delta_\Omega} &
	  \partial_t F(x, u(t-\tau'))\, d\tau'
\Big\|_{L^d(\Omega)}\\
& \le C \int_0^t 
\tau'^{-\frac{d}{2} (\frac{1}{q_0} - \frac{1}{d}) - \frac{\gamma}{2}} 
\big\||u(t-\tau')|^{2^*(\gamma)-2} \partial_t u(t-\tau')\big\|_{L^{q_0}(\Omega)}\, d\tau'\\
& \le C \int_0^t 
\tau'^{-\frac{d}{2} (\frac{1}{q_0} - \frac{1}{d}) - \frac{\gamma}{2}} 
\|u(t-\tau')\|_{L^q}^{2^*(\gamma)-2}\|\partial_t u(t-\tau')\|_{L^d(\Omega)}\, d\tau',
\end{split}
\end{equation}
where the exponent $q_0$ satisfies  
\[
\frac{1}{d} < \frac{\gamma}{d} + \frac{1}{q_0}<1,\quad \frac{1}{r} = \frac{2^*(\gamma)-2}{q} + \frac{1}{d}.
\]
By the definitions of $\|\cdot\|_{\mathcal K^q(T,\Omega)}$ and $\|\cdot\|_{\mathcal K^{d,1}(T,\Omega)}$, 
the right-hand side of \eqref{eq1-second} is estimated from above as 
\[
C 
\left(
\int_0^t 
\tau'^{-\frac{d}{2} (\frac{1}{q_0} - \frac{1}{d}) - \frac{\gamma}{2}} 
(t-\tau')^{-\frac{d(2^*(\gamma)-2)}{2} (\frac{1}{q_c} - \frac{1}{q} ) -\frac{d}{2} ( \frac{1}{q_c} - \frac{1}{d} ) -1}\, d\tau'
\right)
\|u\|_{\mathcal K^q(t,\Omega)}^{2^*(\gamma)-2}
\|\partial_t u\|_{\mathcal K^{d,1}(t,\Omega)},
\]
where we require that $q$ and $r$ satisfy 
\[
-\frac{d}{2} \left(\frac{1}{q_0} - \frac{1}{d}\right) - \frac{\gamma}{2} > -1,\quad 
-\frac{d(2^*(\gamma)-2)}{2} \left(\frac{1}{q_c} - \frac{1}{q} \right) -\frac{d}{2} \left( \frac{1}{q_c} - \frac{1}{d} \right) -1 > -1
\]
for convergence of the above integral with respect to $\tau'$.
Here, the above four conditions on $q$ and $r$ amount to 
\begin{equation}\label{condi3}
\frac{1}{q_c} - \frac{4-d}{2d(2^*(\gamma)-2)}
< \frac{1}{q} < \frac{1}{q_c},
\end{equation}
and the above integral is calculated as follows:
\[
\int_0^t 
\tau'^{-\frac{d}{2} (\frac{1}{q_0} - \frac{1}{d}) - \frac{\gamma}{2}} 
(t-\tau')^{-\frac{d(2^*(\gamma)-2)}{2} (\frac{1}{q_c} - \frac{1}{q} ) -\frac{d}{2} ( \frac{1}{q_c} - \frac{1}{d} ) -1}\, d\tau' = 
C t^{-\frac{d}{2} (\frac{1}{q_c} - \frac{1}{d})-1}. 
\]
Hence, by combining what has been obtained so far, we obtain 
\[
\Big\| \partial_t \int_0^t e^{(t-\tau)\Delta_\Omega} 
	 F(\cdot, u(\tau)) \,d\tau 
	\Big\|_{\mathcal K^{d,1}(T,\Omega)} 
\le C \left(\|u_0\|_{L^{q_c}(\Omega)}^{2^*(\gamma)-1} + \|u\|_{\mathcal{K}^{q}(T,\Omega)}^{2^*(\gamma)-2}
 \|\partial_tu\|_{\mathcal{K}^{d,1}(T,\Omega)} \right).
\]
Similarly, we can also prove another estimate with $\mathcal K^{2,1}(T,\Omega)$-norm in \eqref{nonlin.est3} under the condition \eqref{condi3}. 
Therefore, we conclude the statement (iii). The proof of Lemma~\ref{l:crtHS.nonlin.est} is finished.
\end{proof}

\begin{proof}[Proof of Proposition \ref{prop:wellposed-A}]
The proofs of (i)--(v) are obtained by combining Lemma \ref{l:crtHS.nonlin.est} and the standard fixed-point argument.
Thus, we may omit the proofs. We give only a sketch of proof of (vi) when $X = \dot H^1(\Delta_\Omega)$. 
Take $\rho >0$ and $M>0$ such that
\begin{equation}\label{rho-M}
\rho\, +\, C_1 M^{2^*(\gamma)-1} \le M\quad \text{and}\quad \max\{C_1,C_3\} M^{2^*(\gamma)-2} \le \frac12, 
\end{equation}
where $C_1$ and $C_3$ are the same constants as those in (i) and (iii) of Lemma \ref{l:crtHS.nonlin.est}, respectively.
Let $A>0$. 
Suppose that $u_0 \in \dot H^1(\Delta_\Omega)$ and $T>0$ satisfy
\begin{equation}\label{t:HS.WP.Lbsg:c2}
\|u_0\|_{\dot H^1(\Delta_\Omega)}\le A\quad \text{and}\quad \|e^{t\Delta_\Omega} u_0\|_{\mathcal{K}^{q}(T,\Omega)}
\le \rho. 
\end{equation}
Define the map $\Phi_{u_0}$ by 
\[
\Phi_{u_0}[u](t) := e^{t\Delta_\Omega}u_0 + \int_{0}^t e^{(t-\tau)\Delta_\Omega} F(x, u(\tau))\, d\tau
\]
for $t\in [0,T]$. Given $B>0$, we define 
\[
Y := \{ u  \ ;\,  \|u\|_{\mathcal K^q(T,\Omega)} \le M, \|\partial_t u\|_{\mathcal K^{3,1}(T,\Omega)} \le B \},
\]
equipped with the metric $\mathrm{d}(u,v) :=  \|u-v\|_{\mathcal K^q(T,\Omega)}$. 
Then, $(Y, \mathrm{d})$ is a complete metric space. By (i) in Lemma \ref{l:crtHS.nonlin.est}, \eqref{rho-M} and \eqref{t:HS.WP.Lbsg:c2}, 
we have
\[
\| \Phi_{u_0}[u] \|_{\mathcal K^q(T,\Omega)}
 \le \| e^{t\Delta_\Omega} u_0  \|_{\mathcal K^q(T,\Omega)} + C_1 \|u\|_{\mathcal K^q(T,\Omega)}^{2^*(\gamma)-1}\le \rho + C_1 M^{2^*(\gamma)-1} \le M
\]
for any $u \in Y$, and 
\[
\begin{split}
\| \Phi_{u_0}[u] - \Phi_{u_0}[v] \|_{\mathcal K^q(T,\Omega)}
& \le C_1 \max\{\|u\|_{\mathcal{K}^{q}(T,\Omega)},\|v\|_{\mathcal{K}^{q}(T,\Omega)} \}^{2^*(\gamma)-2} 
 \|u-v\|_{\mathcal{K}^{q}(T,\Omega)}\\
& \le  C_1 M^{2^*(\gamma)-2} 
 \|u-v\|_{\mathcal{K}^{q}(T,\Omega)} \\
& \le \frac12 \|u-v\|_{\mathcal{K}^{q}(T,\Omega)}
\end{split}
\]
for any $u, v \in Y$.
On the other hand, by (iii) in Lemma \ref{l:crtHS.nonlin.est}, \eqref{rho-M}, and \eqref{t:HS.WP.Lbsg:c2}, we estimate
\[
\begin{split}
\|\partial_t \Phi_{u_0}[u]
\|_{\mathcal K^{3,1}(T,\Omega)}
&\le 
\|e^{t\Delta_\Omega}u_0
\|_{\mathcal K^{3,1}(T,\Omega)} + 
 C_3 \left(\|u_0\|_{L^{q_c}(\Omega)}^{2^*(\gamma)-1} + \|u\|_{\mathcal{K}^{q}(T,\Omega)}^{2^*(\gamma)-2}
 \|\partial_tu\|_{\mathcal{K}^{d,1}(T,\Omega)} \right)\\
& \le 
C_4 \left( \|u_0\|_{\dot H^1(\Delta_\Omega)} + \|u_0\|_{\dot H^1(\Delta_\Omega)}^{2^*(\gamma)-1}\right) +  C_3 \|u\|_{\mathcal{K}^{q}(T,\Omega)}^{2^*(\gamma)-2}
 \|\partial_tu\|_{\mathcal{K}^{d,1}(T,\Omega)}\\
&\le C_4 (A + A^{2^*(\gamma)-1}) + C_3 M^{2^*(\gamma)-2}B\\
&\le \frac{B}{2}+\frac{B}{2} = B
\end{split}
\]
for any $u \in Y$, where we take $B = 2C_4 (A + A^{2^*(\gamma)-1})$. 
Summarizing the estimates obtained so far, we see that $\Phi_{u_0}$ is contractive from $Y$ into itself. 
Therefore, Banach's fixed-point theorem allows us to prove that there exists a function $u\in Y$ such that 
$u=\Phi_{u_0}[u]$. 
Finally, it follows from (ii) and (iii) in Lemma \ref{l:crtHS.nonlin.est} that $u \in C([0,T); \dot H^1(\Delta_\Omega))$ and $\partial_t u \in \mathcal K^{2,1}(T,\Omega)$. 
Thus, we conclude Proposition~\ref{prop:wellposed1}.
\end{proof}

Moreover, we have the following stability result for \eqref{crtHS}.

\begin{prop}\label{prop:perturbation-A}
Let $d\ge3$ and $0\le \gamma <2$. Assume $q\in [1,\infty]$ satisfies \eqref{l:crtHS.nonlin.est:c} or \eqref{l:crtHS.nonlin.est:c2'}, and $r \in [1,\infty)$ satisfies \eqref{t:HS.mK.WP.H1:cr}. 
Let $T \in (0,\infty]$ and 
$v \in C([0,T); X) \cap \mathcal K^q_r(T,\Omega)$ satisfy the equation
\[
	\partial_t v - \Delta_\Omega v = F(x,v) + e
\]
with initial data $v(0) = v_0 \in X$, where $e=e(t,x)$ is a function on $(0,T) \times \Omega$. 
Assume that $v$ satisfies
\begin{equation}\label{propA:ass1}
	\|v\|_{L^\infty([0,T); X)} \le M \quad \text{and}\quad 
	\|v\|_{\mathcal K^q_r(T,\Omega)} \le M.
\end{equation}
Then the following assertions hold:
\begin{itemize}
\item[(i)] 
There exist constants $\delta_0=\delta_0(M)>0$ and $C=C(M)>0$ such that 
the following assertion holds{\rm :}
If the error term $e$ and a function $u_0 \in X$ satisfy
\[
	\delta := \| u_0 - v_0\|_{X} + \left\|\int_0^t e^{(t-s)\Delta_\Omega}(e(s))\,ds\right\|_{\mathcal K^q_r(T,\Omega)}
	 \le \delta_0,
\]
then there exists a unique solution $u$ to \eqref{crtHS-Dg} on $(0,T)\times \Omega$ with $u(0)=u_0$ satisfying 
\[
	\|u-v \|_{L^\infty([0,T); X)\cap \mathcal K^q_r(T,\Omega)}\le C \delta.
\]

\item[(ii)] Let $\tilde{q} \in [1,\infty]$ satisfy \eqref{l:crtHS.nonlin.est:c} or \eqref{l:crtHS.nonlin.est:c2'}. 
Assume further that $(q,r)$ satisfies 
\begin{equation}\label{new1}
\frac{d-2-2\gamma}{2d} - \frac{1}{\tilde{q}} < \frac{1}{q} <\frac{\gamma}{d(2^*(\gamma) - 2)}
\end{equation}
and 
\begin{equation}\label{new2}
\frac{1}{r} < \min \left\{ \frac{d-2}{2} - \frac{d}{2q}, \frac{1}{2^*(\gamma)-2} \right\}.
\end{equation}
Then there exist constants $\delta_0=\delta_0(M)>0$ and $C=C(M)>0$ such that 
the following assertion holds{\rm :}
If the error term $e$ and a function $u_0 \in X$ satisfy
\[
	\delta := \| u_0 - v_0\|_{X} + \left\|\int_0^t e^{(t-s)\Delta_\Omega}(e(s))\,ds\right\|_{\mathcal K^{\tilde{q}}(T,\Omega)}\le \delta_0
\]
then there exists a unique solution $u$ to \eqref{crtHS-Dg} on $(0,T)\times \Omega$ with $u(0)=u_0$ satisfying 
\[
	\|u-v \|_{L^\infty([0,T); X)\cap \mathcal K^{\tilde{q}}(T,\Omega)}\le C \delta.
\]

\end{itemize}
\end{prop}

To prove (ii) in Proposition \ref{prop:perturbation-A}, we need the following estimates.

\begin{lem}\label{lem:Kato-est}
Let $d\ge3$ and $0\le \gamma <2$. Assume $q,\tilde{q}\in [1,\infty]$ satisfies \eqref{l:crtHS.nonlin.est:c} or \eqref{l:crtHS.nonlin.est:c2'}, and $r \in [1,\infty)$ satisfies \eqref{t:HS.mK.WP.H1:cr}.
In addition, we assume that $(q,r)$ satisfies \eqref{new1} and \eqref{new2}.  
Then, there exists a constant $C>0$ such that for any $T>0$,
\[
\left\| 
\int_0^t e^{(t-\tau)\Delta_\Omega} ( |\cdot|^{-\gamma} |v(\tau)|^{2^*(\gamma)-2} |u(\tau)|)\, d\tau
\right\|_{\mathcal K^{\tilde{q}}(T,\Omega)}
\le C \|v\|_{\mathcal K^q_r(T,\Omega)}^{2^*(\gamma)-2} \|u\|_{\mathcal K^{\tilde{q}}(T,\Omega)}
\]
for any $u \in \mathcal K^{\tilde{q}}(T,\Omega)$ and $v \in \mathcal K^q_r(T,\Omega)$.
\end{lem}

\begin{proof}
By combining (ii) in Lemma \ref{lem:smooth-dirichlet} and H\"older's inequality, we estimate
\[
\begin{split}
& \left\| 
\int_0^t e^{(t-\tau)\Delta_\Omega} ( |\cdot|^{-\gamma} |v(\tau)|^{2^*(\gamma)-2} |u(\tau)|)\, d\tau
\right\|_{L^{\tilde{q}}(\Omega)}\\
& \le C
\int_0^t (t-\tau)^{-\frac{d}{2}(\frac{1}{q_0} - \frac{1}{\tilde{q}}) -\frac{\gamma}{2}} 
\| |v(\tau)|^{2^*(\gamma)-2} |u(\tau)| \|_{L^{q_0}}\, d\tau\\
& \le C
\int_0^t (t-\tau)^{-\frac{d(2^*(\gamma)-2)}{2p} -\frac{\gamma}{2}} 
\| v(\tau)\|_{L^q}^{2^*(\gamma)-2} \|u(\tau) \|_{L^{\tilde{q}}}\, d\tau\\
& \le C
\int_0^t (t-\tau)^{-\frac{d(2^*(\gamma)-2)}{2\tilde{q}} -\frac{\gamma}{2}} 
\tau^{-(2^*(\gamma)-2) \kappa - \frac{d}{2}(\frac{1}{q_c} - \frac{1}{\tilde{q}})}
(\tau^\kappa\| v(\tau)\|_{L^q})^{2^*(\gamma)-2}\, d\tau
\times \|u\|_{K^{\tilde{q}}(T,\Omega)},
\end{split}
\]
where $p_0$ satisfies $1/q_0 = (2^*(\gamma)-2)/q + 1/\tilde{q}$, $\kappa = \kappa(q,r)$ is given by \eqref{kappa} and we require that
\[
0 \le \frac{1}{\tilde{q}} < \frac{\gamma}{d} + \frac{1}{q_0} < 1.
\]
Again using H\"older's inequality, 
we have
\[
\begin{split}
& \int_0^t (t-\tau)^{-\frac{d(2^*(\gamma)-2)}{2\tilde{q}} -\frac{\gamma}{2}} 
\tau^{-(2^*(\gamma)-2) \kappa - \frac{d}{2}(\frac{1}{q_c} - \frac{1}{\tilde{q}})}
(\tau^\kappa\| v(\tau)\|_{L^q})^{2^*(\gamma)-2}\, d\tau\\
& \le C
\left\{
\int_0^t \left( 
(t-\tau)^{-\frac{d(2^*(\gamma)-2)}{2\tilde{q}} -\frac{\gamma}{2}} 
\tau^{-(2^*(\gamma)-2) \kappa - \frac{d}{2}(\frac{1}{q_c} - \frac{1}{\tilde{q}})}
\right)^\sigma\, d\tau 
\right\}^{\frac{1}{\sigma}}\times \|v\|_{\mathcal K^q_r(T,\Omega)}^{2^*(\gamma)-2}\\
& \le C t^{-\frac{d}{2}(\frac{1}{q_c} - \frac{1}{\tilde{q}})} \|v\|_{\mathcal K^q_r(T,\Omega)}^{2^*(\gamma)-2}
\end{split}
\]
where $\sigma \in [1,\infty]$ satisfies $1 = (2^*(\gamma)-2)/r + 1/\sigma$ and we require that 
\[
1 - \left(
\frac{d(2^*(\gamma) -2)}{2q} + \frac{\gamma}{2}
\right)\sigma>0
\]
and 
\[
1 - \left\{
(2^*(\gamma) -2 )\kappa + \frac{d}{2}\left( \frac{1}{q_c} - \frac{1}{q} \right)
\right\} \sigma >0.
\]
Here, the above conditions amount to \eqref{new1} and \eqref{new2}.
The proof of Lemma \ref{lem:Kato-est} is finished.
\end{proof}

\begin{proof}[Proof of Proposition \ref{prop:perturbation-A}]
The proof of (i) is the same as that of the stability result with the so-called Strichartz space as an auxiliary space, instead of $\mathcal K^q_r(T)$ (see, e.g., 
Theorem 2.15 in \cite{KM-2006}, Proposition 4.1 in \cite{Mas2017}, and Proposition 2.1 in \cite{GR-2018}).
Moreover, we can also prove (ii) in almost the same way as (i) by use of Lemma~\ref{lem:Kato-est}. 
Let us give only a sketch of proof of (ii). 

To prove (ii), it is sufficient to find a solution $w=w(t)$ to the problem
\begin{equation}\label{pertubation-pro}
\begin{cases}
\partial_t w - \Delta_\Omega w = F(x,v+w) - F(x,v) -e,&(t,x)\in I\times\Omega, \\
w(0) = w_0 := u_0 - v_0 \in X,
\end{cases}
\end{equation}
where $I := (0,T)$. Define 
\[
\Phi (w)(t) := 
e^{t\Delta_\Omega}w_0 +
\int_0^t e^{(t-\tau)\Delta_\Omega} (F(x,v+w) - F(x,v) -e)(\tau)\, d\tau,
\]
Let $m>0$ be chosen later.
We can divide the interval $I$ into a finite number of small intervals $[t_{j-1}, t_{j})$ such that
$0=: t_0 < t_1 < t_2 < \ldots < t_{J-1} < t_J := \sup\, I$ and  
\[
\| \chi_{[t_{j-1}, t_{j})} v\|_{\mathcal K^q_r(T,\Omega)} \le m,\quad j= 1,2,\cdots, J,
\]
where $J \in \mathbb N$ satisfies
\[
\left( \frac{M}{m} \right)^{2^*(\gamma)}
\le J < \left( \frac{M}{m} \right)^{2^*(\gamma)} + 1,
\]
and $\chi_{[t_{j-1}, t_{j})}=\chi_{[t_{j-1}, t_{j})}(t)$ is the characteristic function of $[t_{j-1}, t_{j})$. We write $I_j := [0, t_j)$. 
To construct a solution $w$ to \eqref{pertubation-pro} on $I_1$
we define a complete metric space $X_1$ by 
\[
X_1 := \{ w  \in \mathcal K^{\tilde{q}}(t_1,\Omega) \, ;\, \|w\|_{\mathcal K^{\tilde{q}}(t_1,\Omega)} \le a_1 \},
\]
\[
\mathrm{d}_{X_1}(w_1,w_2) := \|w_1 -w_2\|_{\mathcal K^{\tilde{q}}(t_1,\Omega)}.
\]
Then, by Lemma \ref{lem:Kato-est}, we have
\[
\begin{split}
\|\Phi(w)\|_{\mathcal K^{\tilde{q}}(I_1,\Omega)}
& \le \|e^{t\Delta_\Omega}w_0\|_{\mathcal K^{\tilde{q}}(I_1,\Omega)}
+ C \left( \|v\|_{\mathcal K^q_r(I_1,\Omega)}^{2^*(\gamma)-2} + \|w\|_{\mathcal K^{\tilde{q}}(I_1,\Omega)}^{2^*(\gamma)-2} \right)\|w\|_{\mathcal K^{\tilde{q}}(I_1,\Omega)}\\
& \le \delta + C(m^{2^*(\gamma)-2} + a_1^{2^*(\gamma)-2}) a_1,
\end{split}
\]
\[
\begin{split}
& \mathrm{d}_{X_1} (\Phi_1(w_1), \Phi_1(w_2))\\
& \le C \left( \|v\|_{\mathcal K^q_r(I_1,\Omega)}^{2^*(\gamma)-2} + \|w_1\|_{\mathcal K^{\tilde{q}}(I_1,\Omega)}^{2^*(\gamma)-2} + \|w_2\|_{\mathcal K^{\tilde{q}}(I_1,\Omega)}^{2^*(\gamma)-2} \right)\|w_1-w_2\|_{\mathcal K^{\tilde{q}}(I_1,\Omega)}\\
& \le C(m^{2^*(\gamma)-2} + 2a_1^{2^*(\gamma)-2}) \mathrm{d}_{X_1} (w_1, w_2)
\end{split}
\]
for any $w,w_1,w_2 \in X_1$. Then, choosing $a_1, m, \delta$ as 
\begin{equation}\label{A:condi1}
2Ca_1^{2^*(\gamma)-2} \le \frac14,\quad Cm^{2^*(\gamma)-2} \le \frac14,\quad \delta \le \frac{a_1}{2},
\end{equation}
we obtain
\[
\|\Phi_1(w)\|_{\mathcal K^{\tilde{q}}(I_1,\Omega)} \le a_1,\quad 
\mathrm{d}_{X_1} (\Phi(w_1), \Phi(w_2)) \le \frac12 \mathrm{d}_{X_1} (w_1, w_2),
\]
and hence, $\Phi$ is contractive on $X_1$ and there exists a unique solution $w$ to \eqref{pertubation-pro} on $I_1$ by Banach's fixed point theorem.

For $2 \le j \le J$, we suppose that there exists a unique solution $w$ to \eqref{pertubation-pro} on $I_{j-1}$ such that 
\[
\|w\|_{\mathcal K^{\tilde{q}}(t_{j-1},\Omega)} \le a_{j-1}.
\]
To construct a solution $w$ to \eqref{pertubation-pro} on $I_{j}$, we define a complete metric space $X_j$ by
\[
X_j := \{ w  \in \mathcal K^{\tilde{q}}(t_{j},\Omega) \, ;\, \|w\|_{\mathcal K^{\tilde{q}}(t_j,\Omega)} \le a_j \},
\]
\[
\mathrm{d}_{X_j}(w_1,w_2) := \|w_1 -w_2\|_{\mathcal K^{\tilde{q}}(t_{j-1},\Omega)}.
\]
By Lemma \ref{lem:Kato-est}, we have
\[
\begin{split}
\|\Phi(w)\|_{\mathcal K^{\tilde{q}}(t_j,\Omega)}
& \le \|e^{t\Delta_\Omega}w_0\|_{\mathcal K^{\tilde{q}}(t_j,\Omega)}
+ C \left( \|v\|_{\mathcal K^q_r(t_{j-1},\Omega)}^{2^*(\gamma)-2} + \|w\|_{\mathcal K^{\tilde{q}}(t_{j-1},\Omega)}^{2^*(\gamma)-2} \right)\|w\|_{\mathcal K^{\tilde{q}}(t_{j-1},\Omega)}\\
& + C \left( \| \chi_{[t_{j-1}, t_{j})}v\|_{\mathcal K^q_r(t_j,\Omega)}^{2^*(\gamma)-2} + \| \chi_{[t_{j-1}, t_{j})}w\|_{\mathcal K^{\tilde{q}}(t_j,\Omega)}^{2^*(\gamma)-2} \right)\| \chi_{[t_{j-1}, t_{j})}w\|_{\mathcal K^{\tilde{q}}(t_j,\Omega)}\\
& \le \delta + C(M^{2^*(\gamma)-2} + a_{j-1}^{2^*(\gamma)-2}) a_{j-1} + C(m^{2^*(\gamma)-2} + a_j^{2^*(\gamma)-2}) a_j
\end{split}
\]
for any $w\in X_j$.
Moreover, let $w_1,w_2\in X_j$. Then, noting that $\Phi(w_1) = w = \Phi(w_2)$ by the uniqueness, we have
\[
\begin{split}
\mathrm{d}_{X_j} (\Phi(w_1), \Phi(w_2))
& \le C \Big( \|\chi_{[t_{j-1}, t_{j})}v\|_{\mathcal K^q_r(t_j,\Omega)}^{2^*(\gamma)-2} + \|\chi_{[t_{j-1}, t_{j})}w_1\|_{\mathcal K^{\tilde{q}}(t_j,\Omega)}^{2^*(\gamma)-2} \\
& + \|\chi_{[t_{j-1}, t_{j})}w_2\|_{\mathcal K^{\tilde{q}}(t_j,\Omega)}^{2^*(\gamma)-2} \Big)\|\chi_{[t_{j-1}, t_{j})}(w_1-w_2)\|_{\mathcal K^{\tilde{q}}(t_j,\Omega)}\\
& \le C(m^{2^*(\gamma)-2} + 2a_j^{2^*(\gamma)-2}) \mathrm{d}_{X_j} (w_1, w_2)
\end{split}
\]
Hence, choosing $a_{j-1}, a_j, m, \delta$ so that 
\begin{equation}\label{A:condi2}
2Ca_j^{2^*(\gamma)-2} \le \frac14,\quad Cm^{2^*(\gamma)-2} \le \frac14,\quad \delta \le \frac{a_j}{2},
\quad C(M^{2^*(\gamma)-2} + a_{j-1}^{2^*(\gamma)-2}) a_{j-1} \le \frac{a_j}{8}.
\end{equation}
Then, $\Phi$ is contractive on $X_j$ and there exists a unique solution $w$ to \eqref{pertubation-pro} on $I_j$ by Banach's fixed point theorem.
By induction, we construct a unique solution $w$ to \eqref{pertubation-pro} on $[0,T)$. 
Here, to justify the above argument, we appropriately choose the parameters $\delta_0, m, a_j$ as follows. 
Let $m>0$ be such that $Cm^{2^*(\gamma)-2} = 1/4$.
We define
\[
a_j := a_1 (1+ 8CM^{2^*(\gamma)-2})^{j-1},
\]
and set $a_1 = 2\delta$. Moreover, 
we take $a_1>0$ so that $2Ca_J^{2^*(\gamma)-2} \le 1/4$ (i.e., we take $\delta_0$ sufficiently small). 
If we choose as above, the conditions \eqref{A:condi1} and \eqref{A:condi2} are satisfied. 
Finally, we define $u := v + w$, and then $u$ is a unique solution to \eqref{crtHS-Dg} on $(0,T)\times \Omega$ with $u(0)=u_0$ satisfying 
\[
\|u-v \|_{\mathcal K^{\tilde{q}}(T,\Omega)}\le a_J = C \delta.
\]
Similarly, we also have
\[
\|u-v \|_{L^\infty([0,T); X)}\le C \delta.
\]
Thus, we conclude Proposition \ref{prop:perturbation-A}.
\end{proof}

\medskip

Finally, in this appendix, we state the result on dissipation of global solutions $u$ to \eqref{crtHS-Dg} with $\|u\|_{\mathcal K^q_r(\Omega)} < \infty$.
\begin{prop}
\label{prop:diss-A}
Let $d\ge 3$ and $0\le \gamma <2$. 
Let $u_0\in X$ and $u=u(t)$ be a mild solution to \eqref{crtHS-Dg} with $u(0)=u_0$. 
Then the following assertions hold:
\begin{itemize}
\item[(i)] 
When $\gamma < \infty$, 
the following statements are equivalent{\rm :}
\begin{itemize}
\item[(a)] $T_m=+\infty$ and $\|u\|_{\mathcal K^q_r(\Omega)} < \infty$.
\item[(b)] $\lim_{t\to T_m} \|u(t)\|_{X}=0$.
\end{itemize}

\item[(ii)] When $\gamma = \infty$, 
the following statements are equivalent{\rm :}
\begin{itemize}
\item[(a)] $T_m=+\infty$ and $\|u\|_{\mathcal K^q(\Omega)} < \infty$.
\item[(b)] $\lim_{t\to T_m} \|u(t)\|_{X}=0$.
\item[(c)] $\lim_{t\to T_m} t^{\frac{d}{2} (\frac{1}{q_c}-\frac{1}{q})} \|u(t)\|_{L^{q}(\Omega)}=0$.
\end{itemize}
\end{itemize}
\end{prop}

\begin{proof}
We consider only the assertion (ii) in the case of $X=L^{q_c}(\Omega)$, as the other cases are almost the same.
By small-data dissipation (iv) in Proposition \ref{prop:wellposed-A}, the statement (a) follows from (b). 
By the blow-up criterion (iii) in Proposition \ref{prop:wellposed-A}, it is clear that (c) implies (a). 
We prove only that (a) implies (c), as the proof of the remaining case is similar. 

We suppose (a). 
The solution $u$ to \eqref{crtHS-Dg} is written as
\[
\begin{split}
u(t)  & =
e^{t\Delta_\Omega} u_0 + \int_{0}^{t'} e^{(t-\tau)\Delta_\Omega}F(x,u(\tau))\, d\tau +
\int_{t'}^t e^{(t-\tau)\Delta_\Omega}F(x, u(\tau))\, d\tau\\
& =:  I(t) + I\hspace{-1pt}I(t) + I\hspace{-1pt}I\hspace{-1pt}I(t)
\end{split}
\]
for $t'\in (0,t)$. 
Let $\varepsilon>0$ be fixed and $\alpha := d(1/q_c -1/q)/2$. 
Since $L^{q_c}(\Omega)$ is dense in $C^\infty_0(\Omega)$, there exists $v_\varepsilon \in C^\infty_0(\Omega)$ such that 
$C_5\| u_0 - v_\varepsilon\|_{L^{q_c}(\Omega)} < \varepsilon/2$, where $C_5$ is the constant in \eqref{c_5} below. 
By (i) in Lemma \ref{l:lin.est.HS}, there exist a constant $C_5>0$, independent of $\varepsilon$, and a time $t_1 = t_1(\varepsilon)>0$ such that 
\begin{equation}\label{c_5}
t^{\alpha}\| I(t) \|_{L^q(\Omega)} \le 
C_5 \| u_0 - v_\varepsilon\|_{L^{q_c}(\Omega)} + C_5t^{-\beta} \|v_\varepsilon\|_{L^1(\Omega)} \le \varepsilon\quad \text{for any $t>t_1$,}
\end{equation}
where $\beta$ is a positive real number given by $\beta= d(1-1/q)/2 -\alpha$. 
Next, we consider the second term $I\hspace{-1pt}I(t)$. We write
$
I\hspace{-1pt}I(t) 
= e^{(t-t')\Delta}w(t')
$
for $t'\in (0,t)$, where $w(t')$ is given by
\[
w(t'):= \int_{0}^{t'} e^{(t'-\tau)\Delta_\Omega}F(x,u(\tau))\, d\tau.
\]
We take $t' = t - t^\delta$ with $\delta\in (0,1)$ such that $d\delta (1 - 1/q)/2 - \alpha >0$.
Since $w(t')\in L^{q_c}(\Omega)$ for any $t'\in (0,t)$, we can apply the same argument as the proof of $I(t)$ to $I\hspace{-1pt}I(t)$, and hence, there exists a positive time $t_2 = t_2(\varepsilon)>0$ such that 
\[
t^{\alpha}\| I\hspace{-1pt}I(t) \|_{L^q(\Omega)} 
\le \varepsilon\quad \text{for any $t>t_2$.}
\]
Finally, we estimate the third term $I\hspace{-1pt}I\hspace{-1pt}I(t)$ as 
\[
t^\alpha\| I\hspace{-1pt}I\hspace{-1pt}I(t)\|_{L^{q}(\Omega)} 
\le 
C \|u\|_{\mathcal K^q(\Omega)}^{2^*(\gamma)-1}\int_{\frac{t'}{t}}^1 A(\tau)\,d\tau
\]
for any $t'\in (0,t)$, where 
\[
A(\tau) := (1-\tau)^{-\frac{d}{2}(\frac{2^*(\gamma)-1}{q}-\frac{1}{2})
					-\frac{1+\gamma}{2}} 
	\tau^{-\frac{d(2^*(\gamma)-1)}{2}(\frac{1}{q_c}-\frac{1}{q})}.
\]
It is seen from the assumption \eqref{l:crtHS.nonlin.est:c} on $q$ that 
\[
\int_0^1 A(\tau)\,d\tau<\infty.
\]
Hence we have 
\[
\int_{\frac{t'}{t}}^1 A(\tau)\,d\tau \to 0\quad \text{as }\frac{t'}{t} \to 1,
\]
as $t' = t-t^\delta$. Then 
there exists $t_3 = t_3(\varepsilon) >0$ such that
\[
t^\alpha\| I\hspace{-1pt}I\hspace{-1pt}I(t) \|_{L^{q}(\Omega)} < \varepsilon \quad \text{for any $t>t_3$}.
\]
Combining the estimates obtained so far, we obtain (c). 
Thus, we conclude Proposition~\ref{prop:diss-A}.
\end{proof}

\section{Comparison principle}\label{app:B}
In this appendix, we assume that $\Omega_1$ and $\Omega_2$ are domains of $\R^d$ such that $\Omega_1\subset\Omega_2$. 
Then, it is known that the pointwise estimates
\[
0\le G_{\Omega_1}(t,x,y) \le G_{\Omega_2}(t,x,y),\quad t>0,\ \text{a.e.}\, x,y\in \Omega_1
\]
hold (see, e.g., Ouhabaz \cite{Ouh2005}). 
These yield the comparison
\begin{equation}\label{comparison-l}
|e^{t\Delta_{\Omega_1}} (u_{0,1})(x)|
\le e^{t\Delta_{\Omega_2}} (u_{0, 2})(x)
\end{equation}
for any $t>0$ and almost everywhere $x\in\Omega_1$, where $u_{0,j} \in L^{q_c}(\Omega_j)$ are such that $|u_{0,1}| \le u_{0,2}$ for almost everywhere $x\in\Omega_1$, which means that $u_{0,2}$ is real and positive on $\Omega_1$. 
This comparison \eqref{comparison-l} is inhibited by the Dirichlet problems \eqref{crtHS-Dg} of nonlinear heat equations with source terms $F$.

\begin{lem}\label{lem:comparison}
Let $j=1,2$ and $T\in (0,\infty]$. 
Suppose that $F_j : \Omega_j \times \mathbb C \to \mathbb C$ satisfies \eqref{assum-F} and  
\[
F_j(x,z) \in \mathbb R\quad \text{for }z\in\mathbb R,\quad 
F_j(x,z)\ge 0 \quad \text{for }z\ge0.
\]
Let $u_j$ be a solution to \eqref{crtHS-Dg} on $[0,T)\times \Omega_j$ with the nonlinear term $F_j$ and initial data $u_{0,j} \in L^{q_c}(\Omega_j)$. 
In addition, assume that 
\begin{equation}\label{assum:F_1F_2}
|F_1(x,z_1)| \le F_2(x,z_2),\quad x\in \Omega_1,\quad z_1,z_2\in\mathbb C\text{ with }|z_1|\le z_2,
\end{equation}
\begin{equation}\label{assum:initialdata}
|u_{0,1}(x)| \le u_{0,2}(x),\quad \text{a.e.}\, x\in \Omega_1.
\end{equation}
Then 
\[
|u_1(t,x)|\le u_2(t,x),\quad t\in [0,T),\ \text{a.e.}\, x\in \Omega_1.
\]
\end{lem}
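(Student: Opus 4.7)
My plan is to propagate the desired pointwise comparison along Picard iterates and then pass to the limit. Set $u_j^{(0)} \equiv 0$ and, for $n \ge 0$,
\[
u_j^{(n+1)}(t) := e^{t\Delta_{\Omega_j}} u_{0,j} + \int_0^t e^{(t-\tau)\Delta_{\Omega_j}} F_j(\cdot, u_j^{(n)}(\tau))\, d\tau, \qquad j=1,2.
\]
A preliminary induction shows that every $u_2^{(n)}$ is real and nonnegative: \eqref{assum:initialdata} forces $u_{0,2} \ge 0$, the Dirichlet heat kernel $G_{\Omega_2}$ is nonnegative, and the sign assumption on $F_2$ is preserved by the iteration. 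Taking $F_1 = F_2$ in \eqref{assum:F_1F_2} also yields the monotonicity $0 \le a \le b \Rightarrow 0 \le F_2(x, a) \le F_2(x, b)$, used below.

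The heart of the argument is the induction on $n$ proving
\[
|u_1^{(n)}(t, x)| \le u_2^{(n)}(t, x), \qquad t \ge 0,\ \text{a.e.}\ x \in \Omega_1.
\]
The inductive step rests on three ingredients: (i) nonnegativity of $G_{\Omega_1}$, which gives $|e^{s\Delta_{\Omega_1}} f|(x) \le e^{s\Delta_{\Omega_1}} |f|(x)$; (ii) the pointwise kernel comparison $G_{\Omega_1}(s,x,y) \le G_{\Omega_2}(s,x,y)$ on $\Omega_1\times\Omega_1$, which, after zero-extending from $\Omega_1$ to $\Omega_2$, upgrades to $e^{s\Delta_{\Omega_1}} g(x) \le e^{s\Delta_{\Omega_2}} \widetilde{g}(x)$ for any nonnegative $g$ on $\Omega_1$; and (iii) the nonlinearity bound $|F_1(\cdot, u_1^{(n)})| \le F_2(\cdot, |u_1^{(n)}|) \le F_2(\cdot, u_2^{(n)})$, obtained by combining \eqref{assum:F_1F_2} with the inductive hypothesis and the monotonicity of $F_2$. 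Applying these bounds term by term to $|u_1^{(n+1)}|$ reproduces exactly the integral formula defining $u_2^{(n+1)}$.

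Finally, on a sufficiently small interval $[0, T_0]$ the iterates $u_j^{(n)}$ converge to the unique mild solution $u_j$ in $\mathcal{K}^q(T_0, \Omega_j)\cap L^{\infty}([0,T_0]; X)$ by Proposition~\ref{prop:wellposed-A}; extracting a subsequence converging almost everywhere transfers the pointwise bound $|u_1^{(n)}| \le u_2^{(n)}$ to $|u_1| \le u_2$ on $[0, T_0] \times \Omega_1$. To reach the full interval $[0, T)$, I would restart the Picard scheme from any intermediate time $t_0 < T$ with initial data $(u_1(t_0), u_2(t_0))$, whose comparison at time $t_0$ has just been established. The main technical subtlety is that the length of each restart step depends on the size of the shifted data in $X$, so uniform progress must be ensured; this is guaranteed by the finiteness of $\|u_j\|_{\mathcal{K}^q(T', \Omega_j)}$ on every $[0, T'] \subset [0, T)$, which permits a finite partition of $[0, T']$ on whose subintervals the fixed-point step succeeds.
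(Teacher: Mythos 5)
Your proposal follows essentially the same route as the paper: propagate the pointwise bound $|u_1^{(n)}|\le u_2^{(n)}$ through the Picard iterates by induction, using the kernel comparison \eqref{comparison-l} together with \eqref{assum:F_1F_2} and \eqref{assum:initialdata}, and then pass to the a.e.\ limit (your local-in-time restart argument is a more careful version of the paper's one-line appeal to Proposition~\ref{prop:wellposed-A}). One small remark: the monotonicity of $F_2$ you extract by ``taking $F_1=F_2$'' in \eqref{assum:F_1F_2} is not licensed by that hypothesis, which concerns only the given pair $(F_1,F_2)$; but it is also unnecessary, since \eqref{assum:F_1F_2} applied directly with $z_1=u_1^{(n)}$ and $z_2=u_2^{(n)}$ (valid by the inductive hypothesis $|u_1^{(n)}|\le u_2^{(n)}$) already yields $|F_1(\cdot,u_1^{(n)})|\le F_2(\cdot,u_2^{(n)})$.
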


\begin{proof}
Let $j=1,2$ and $\{v_{j,n}\}_{n=1}^\infty$ be an iterative sequence of $u_j$ defined by 
\[
v_{j,1} (t) := e^{t\Delta_{\Omega_j}} u_{0.j},
\]
\begin{equation}\label{iteration}
v_{j,n} (t) := e^{t\Delta_{\Omega_j}}v_{j,n-1} + \int_{0}^t e^{(t-\tau)\Delta_{\Omega_j}} F_j(x,v_{j,n-1}(\tau)) \, d\tau,\quad n\ge2.
\end{equation}
By Proposition \ref{prop:wellposed-A}, after possibly passing to a subsequence, 
\[
v_{j,n}(t,x) \to u_j(t,x)\quad \text{as }n\to\infty
\]
for any $t\in [0,T)$ and almost everywhere $x\in\Omega_1$.
Hence, to prove Lemma \ref{lem:comparison}, it suffices to show that 
\begin{equation}\label{goal-B}
|v_{1,n} (t,x)|\le v_{2,n} (t,x),\quad t\in [0,T],\ \text{a.e.}\, x\in \Omega_1
\end{equation}
for any $n\in\mathbb N$. 
When $n=1$, it follows from \eqref{comparison-l} and \eqref{assum:initialdata} that $|v_{1,1} (t,x)|\le v_{2,1} (t,x)$ for any $t\ge 0$ and almost everywhere $x\in\Omega_1$. 
Suppose that \eqref{goal-B}
holds for a fixed $n\ge2$. 
Then, by combining the recurrence formula \eqref{iteration} with \eqref{comparison-l}, 
\eqref{assum:F_1F_2} and \eqref{goal-B} with $n$, we have \eqref{goal-B} with $n+1$, i.e.,
\[
|v_{1,n+1} (t,x)|\le v_{2,n+1} (t,x),\quad t\in [0,T),\ \text{a.e.}\, x\in \Omega_1.
\]
Hence, by induction, we obtain \eqref{goal-B}
for any $n\in\mathbb N$. 
Thus, we conclude Lemma~\ref{lem:comparison}.
\end{proof}

\section*{Acknowledgement}
\par
The first author is supported by Grant-in-Aid for Young Scientists (B) 
(No. 17K14216) and Challenging Research (Pioneering) (No.17H06199), 
Japan Society for the Promotion of Science. 
The second author is supported by JST CREST (No. JPMJCR1913), Japan and 
the Grant-in-Aid for Scientific Research (B) (No.18H01132) and 
Young Scientists Research (No.19K14581), 
JSPS.
The third author is supported by Grant-in-Aid for JSPS Fellows 
(No.19J00206), JSPS.

\begin{bibdiv}
 \begin{biblist}[\normalsize]

\bib{BenTayWei2017}{article}{
   author={B. Ben Slimene},
   author={S. Tayachi},
   author={F. B. Weissler},
   title={Well-posedness, global existence and large time behavior for
   Hardy-H\'enon parabolic equations},
   journal={Nonlinear Anal.},
   volume={152},
   date={2017},
   pages={116--148},
}

\bib{BreLie1983}{article}{
   author={Br\'{e}zis, H.},
   author={Lieb, E. H.},
   title={A relation between pointwise convergence of functions and
   convergence of functionals},
   journal={Proc. Amer. Math. Soc.},
   volume={88},
   date={1983},
   number={3},
   pages={486--490},
}

\bib{CFGM_arxiv}{article}{
   author={Cardoso, M.},
   author={Farah, L. G.},
   author={Guzm\'an, C. M.},
   author={Murphy, J. },
   title={Scattering below the ground state for the intercritical non-radial inhomogeneous NLS},
   journal={arXiv:2007.06165},
   date={2020},
}

\bib{Chi2019}{article}{
   author={Chikami, N.},
   title={Composition estimates and well-posedness for Hardy-H\'{e}non parabolic
   equations in Besov spaces},
   journal={J. Elliptic Parabol. Equ.},
   volume={5},
   date={2019},
   number={2},
   pages={215--250},
}

\bib{CL_arxiv}{article}{
   author={Cho, Y.},
   author={Lee, K.},
   title={On the focusing energy-critical inhomogeneous NLS: weighted space approach},
   journal={arXiv:2002.12355},
   date={2020},
}

\bib{GW-2005}{article}{
   author={Gazzola, F.},
   author={Weth, T.},
   title={Finite time blow-up and global solutions for semilinear parabolic
   equations with initial data at high energy level},
   journal={Differential Integral Equations},
   volume={18},
   date={2005},
   number={9},
   pages={961--990},
}

\bib{GhoMor2013}{book}{
   author={Ghoussoub, N.},
   author={Moradifam, A.},
   title={Functional inequalities: new perspectives and new applications},
   series={Mathematical Surveys and Monographs},
   volume={187},
   publisher={American Mathematical Society, Providence, RI},
   date={2013},
   pages={xxiv+299},
}

\bib{Gig86}{article}{
   author={Giga, M.},
   title={Solutions for semilinear parabolic equations in $L^p$ and
   regularity of weak solutions of the Navier-Stokes system},
   journal={J. Differential Equations},
   volume={62},
   date={1986},
   number={2},
   pages={186--212},
}

\bib{GR-2018}{article}{
   author={Gustafson, S.},
   author={Roxanas, D.},
   title={Global, decaying solutions of a focusing energy-critical heat
   equation in $\mathbb R^4$},
   journal={J. Differential Equations},
   volume={264},
   date={2018},
   number={9},
   pages={5894--5927},
}

\bib{H-1973}{article}{
   author={H\'enon, M.},
   title={Numerical experiments on the stability of spherical stellar systems},
   journal={Astron. Astrophys},
   volume={24},
   date={1973},
   pages={229--238},
}

\bib{IT-arxiv}{article}{
   author={Ikeda, M.},
   author={Taniguchi, K.},
   title={Global well-posedness, dissipation and blow up for semilinear heat equations in energy spaces associated with self-adjoint operators},
   journal={arXiv:1902.01016v3},
   date={2019},
}

\bib{IS-1996}{article}{
   author={Ikehata, R.},
   author={Suzuki, T.},
   title={Stable and unstable sets for evolution equations of parabolic and
   hyperbolic type},
   journal={Hiroshima Math. J.},
   volume={26},
   date={1996},
   number={3},
   pages={475--491},
}

\bib{Ish2007}{article}{
   author={Ishiwata, M.},
   title={On bounds for global solutions of semilinear parabolic equations
   with critical and subcritical Sobolev exponent},
   journal={Differential Integral Equations},
   volume={20},
   date={2007},
   number={9},
   pages={1021--1034},
   issn={0893-4983},
   review={\MR{2349378}},
}

\bib{Ish2008}{article}{
   author={Ishiwata, M.},
   title={Asymptotic behavior of strong solutions for nonlinear parabolic
   equations with critical Sobolev exponent},
   journal={Adv. Differential Equations},
   volume={13},
   date={2008},
   number={3-4},
   pages={349--366},
}

\bib{KM-2006}{article}{
   author={Kenig, C. E.},
   author={Merle, F.},
   title={Global well-posedness, scattering and blow-up for the
   energy-critical, focusing, non-linear Schr\"odinger equation in the radial
   case},
   journal={Invent. Math.},
   volume={166},
   date={2006},
   number={3},
   pages={645--675},
}

\bib{KM-2008}{article}{
   author={Kenig, C. E.},
   author={Merle, F.},
   title={Global well-posedness, scattering and blow-up for the
   energy-critical focusing non-linear wave equation},
   journal={Acta Math.},
   volume={201},
   date={2008},
   number={2},
   pages={147--212},
}

\bib{KV-2013}{article}{
   author={Killip, Rowan},
   author={Vi\c{s}an, Monica},
   title={Nonlinear Schr\"{o}dinger equations at critical regularity},
   conference={
      title={Evolution equations},
   },
   book={
      series={Clay Math. Proc.},
      volume={17},
      publisher={Amer. Math. Soc., Providence, RI},
   },
   date={2013},
   pages={325--437},
}

\bib{Lie1983}{article}{
   author={Lieb, E. H.},
   title={Sharp constants in the Hardy-Littlewood-Sobolev and related
   inequalities},
   journal={Ann. of Math. (2)},
   volume={118},
   date={1983},
   number={2},
   pages={349--374},
}

\bib{Mas2017}{article}{
   author={Masaki, S.},
   title={On minimal nonscattering solution for focusing mass-subcritical nonlinear Schr\"odinger equation},
   journal={Commun. Partial Differ. Equ.},
   volume={42},
   date={2017},
   number={4},
   pages={626--653},
}

\bib{MiaMurZhe2020}{article}{
   author={Miao, C.},
   author={Murphy, J.},
   author={Zheng, J.},
   title={The energy-critical nonlinear wave equation with an inverse-square
   potential},
   journal={Ann. Inst. H. Poincar\'{e} Anal. Non Lin\'{e}aire},
   volume={37},
   date={2020},
   number={2},
   pages={417--456},
}

\bib{Ouh2005}{book}{
   author={Ouhabaz, E. M.},
   title={Analysis of heat equations on domains},
   series={London Mathematical Society Monographs Series},
   volume={31},
   publisher={Princeton University Press, Princeton, NJ},
   date={2005},
}

\bib{PaySat1975}{article}{
   author={Payne, L. E.},
   author={Sattinger, D. H.},
   title={Saddle points and instability of nonlinear hyperbolic equations},
   journal={Israel J. Math.},
   volume={22},
   date={1975},
   number={3-4},
   pages={273--303},
}

\bib{Rox2017}{article}{
   author={Roxanas, D.},
   title={Long-time dynamics for the energy-critical Harmonic Map Heat Flow and Nonlinear Heat Equation},
   journal={PhD Thesis, UBC},
   date={2017},
}

\bib{Sat1968}{article}{
   author={Sattinger, D. H.},
   title={On global solution of nonlinear hyperbolic equations},
   journal={Arch. Rational Mech. Anal.},
   volume={30},
   date={1968},
   pages={148--172},
}

\bib{SteWei1958}{article}{
   author={Stein, E. M.},
   author={Weiss, G.},
   title={Fractional integrals on $n$-dimensional Euclidean space},
   journal={J. Math. Mech.},
   volume={7},
   date={1958},
   pages={503--514},
   review={\MR{0098285}},
}

\bib{Str1969}{article}{
   author={Strichartz, R. S.},
   title={$L\sp{p}$ estimates for integral transforms},
   journal={Trans. Amer. Math. Soc.},
   volume={136},
   date={1969},
   pages={33--50},
}

\bib{Tan2001}{article}{
   author={Tan, Z.},
   title={Global solution and blowup of semilinear heat equation with
   critical Sobolev exponent},
   journal={Comm. Partial Differential Equations},
   volume={26},
   date={2001},
   number={3-4},
   pages={717--741},
}

\bib{Tay2020}{article}{
   author={Tayachi, S.},
   title={Uniqueness and non-uniqueness of solutions for critical
   Hardy-H\'{e}non parabolic equations},
   journal={J. Math. Anal. Appl.},
   volume={488},
   date={2020},
   number={1},
}

\bib{T-1972}{article}{
   author={Tsutsumi, M.},
   title={On solutions of semilinear differential equations in a Hilbert space},
   journal={Math. Japon.},
   volume={17},
   date={1972},
   pages={173--193},
}

\bib{Wan1993}{article}{
   author={Wang, X.},
   title={On the Cauchy problem for reaction-diffusion equations},
   journal={Trans. Amer. Math. Soc.},
   volume={337},
   date={1993},
   number={2},
   pages={549--590},
}

 \end{biblist}
\end{bibdiv} 

\end{document}